\pgfplotsset{compat=1.9}
\theoremstyle{plain}
\newtheorem{theorem}{Theorem}[section]
\newtheorem{lemma}[theorem]{Lemma}
\newtheorem{proposition}[theorem]{Proposition}
\newtheorem{assumption}[theorem]{Assumption}
\theoremstyle{definition}
\newtheorem{definition}[theorem]{Definition}
\newtheorem{remark}[theorem]{Remark}
\theoremstyle{plain}
\DeclareMathOperator*{\esssup}{ess\,sup}
\DeclareMathOperator*{\essinf}{ess\,inf}
\DeclareMathOperator*{\supp}{supp}
\DeclareMathOperator{\id}{id}
\DeclareMathOperator{\dive}{div}
\newcommand{\reviewun}[1]{#1}
\newcommand{\reviewde}[1]{#1}
\newcommand{\secondereview}[1]{#1}
\begin{document}

\title[A moment approach for entropy solutions of parameter-dependent hyperbolic conservation laws]{A moment approach for entropy solutions of parameter-dependent hyperbolic conservation laws}

\author*[1]{\fnm{Clément} \sur{Cardoen}}\email{clement.cardoen@ec-nantes.fr}

\author[2]{\fnm{Swann} \sur{Marx}}\email{swann.marx@ls2n.fr}

\author[1]{\fnm{Anthony} \sur{Nouy}}\email{anthony.nouy@ec-nantes.fr}

\author[3]{\fnm{Nicolas} \sur{Seguin}}\email{nicolas.seguin@inria.fr}

\affil*[1]{Nantes Université, Centrale Nantes, Laboratoire de Mathématiques Jean Leray, CNRS UMR 6629, France}

\affil[2]{Nantes Université, Centrale Nantes, LS2N, CNRS UMR 6004, France}

\affil[3]{Inria, Centre de l'Université Côte d'Azur, antenne de Montpellier,  Imag, UMR CNRS 5149, Université de Montpellier, France}

\abstract{
We propose a numerical method to solve parameter-dependent \reviewde{scalar} hyperbolic partial differential equations (PDEs) with a moment approach, based on a previous work from Marx et al. (2020). This approach relies on a very weak notion of solution of nonlinear equations, namely parametric entropy measure-valued (MV) solutions, satisfying linear equations in the space of Borel measures. The infinite-dimensional linear problem is approximated by a hierarchy of convex, finite-dimensional, semidefinite programming problems, called Lasserre's hierarchy. This gives us a sequence of approximations of the moments of the occupation measure associated with the parametric entropy MV solution, which is proved to converge. In the end, several post-treatments can be performed from this approximate moments sequence. In particular, the graph of the solution can be reconstructed from an optimization of the Christoffel-Darboux kernel associated with the approximate measure, that is a powerful approximation tool able to capture a large class of irregular functions. Also, for uncertainty quantification problems, several quantities of interest can be estimated, sometimes directly such as the expectation of smooth functionals of the solutions. The performance of our approach is evaluated through numerical experiments on the inviscid Burgers equation  with parametrised initial conditions or parametrised flux function.
}

\pacs[MSC Classification]{35L65, 35D99, 65D15}

\maketitle

\section*{Acknowledgments}

 The authors benefit from the support of the French National Research Agency within the projects ANR-20-THIA-0011 (AIBY4) and ANR-22-CE40-0010. The first and third authors thank the France 2030 framework programme Centre Henri Lebesgue ANR-11-LABX-0020-01 for its stimulating mathematical research programs.


\section{Introduction}
Non-linear hyperbolic conservation laws model numerous physical phenomena in fluid mechanics, traffic flow or non-linear acoustics \cite{dafermos10,whitham11}. The numerical computation of such equations is often a challenge since their solutions may present discontinuities, even if the initial data are smooth. Numerous numerical methods exist to approximate them, amongst which we may cite finite volume or finite difference schemes \cite{leveque92} or the front-tracking method \cite{holden15}. We are interested in this paper in solving parameter-dependent hyperbolic conservation laws, which are considered for various tasks in data assimilation \cite{boulanger15}, uncertainty quantification \cite{abgrall17, poette09, bijl13, zhong22, badwaik21}, sensitivity analysis \cite{chalons18}, or error analysis \cite{giesselmann20}. The parameters in our context appear in the initial data and in the flux function and are associated with a probability measure. The computation of approximate solutions for many instances of the parameters is usually prohibitive and require reduced order models.

Model order reduction methods aim at providing an approximation of the solution $u(z,\xi)$, depending on physical variables $z$ and parameters $\xi$, that can be efficiently evaluated. They either rely on an explicit approximation of the solution map $\xi \mapsto u(\cdot,\xi)$ or an approximation of the solution manifold $\{u(\cdot,\xi) : \xi \in \Xi\}$  by some dimension reduction method.  The main challenge for models driven by conservation laws is that the solution maps and solution manifolds are highly nonlinear (in particular due to the presence of discontinuities), that require the introduction of nonlinear approximation or dimension reduction methods. Several model reduction methods based on compositions have been proposed, that include methods based on parameter-dependent changes of variables \cite{reiss18,grundel22} or deep learning methods using neural networks \cite{laakman21}. These methods usually require high computational resources and huge training data for the approximation of highly nonlinear solution maps. 

Here, we follow a different approach and propose a new surrogate modelling method. It is an extension of \cite{marx1} to parameter-dependent or random conservation laws. Whereas it is classical to seek entropy weak solutions to hyperbolic conservation laws \cite{dafermos10,kruzhkov70}, we are rather interested in so-called  entropy measure-valued (MV) solutions, an even weaker notion of solution, introduced  by DiPerna in \cite{diperna85,necas_weak_1996}. To a MV solution corresponds an occupation measure, whose marginal is the MV solution. 
Even if this notion of solution is very weak, there is a correspondence with entropy weak solution. The measure concentrated on the graph of the entropy weak solution is a MV solution. 
It is worth noting that the formulation in the setting of MV solutions leads to a linear problem, so that some efficient tools from convex analysis can be applied.

We start with a theoretical framework for parameter-dependent conservation laws similar to the one of \cite{mishra12,mishra14}. However, in our case, we introduce a weak-parametric formulation of the problem, where the classical entropy weak formulation is also integrated with respect to the parameter. The purpose of this formulation is to obtain an equivalent definition of parameter-dependent entropy MV solutions using the moments of the associated occupation measure with respect to all the variables, including the parameters. Under the assumption that flux function is polynomial and that the initial data can be described by semi-algebraic functions, the entropy formulation becomes a set of linear constraints on the moments of the occupation measure and we can follow the procedure initiated in \cite{marx1}. Indeed, this allows us to consider the problem as a generalized moment problem (GMP), an infinite-dimensional optimization problem over sequences of moments of measures, where both the cost and the constraints are linear with respect to the moments of the measures. Powerful results from real algebraic geometry allow to reformulate the constraint that a sequence is a moment sequence into tractable semi-definite constraints. This problem is then solved using Lasserre's (moment sum-of-squares) hierarchy \cite{lasserre09}, which consists in solving a sequence of convex semi-definite programs of increasing size to approximate the moments of the occupation measure. Note that the use of Lasserre's hierarchy for solving PDEs has been also recently considered in \cite{henrion2023infinite}, although  with a different approach where the considered measure is defined on an infinite-dimensional function space, and assumed to be concentrated on the solution of the PDE. 

Obtaining an approximation of the moments can be costly, but once this offline computation is performed, efficient online post-treatments are possible. 
First, we can naturally obtain expectations of variables of interest that are functions of the moments of the solution.  
Also, the graph of the entropy weak solution (for any parameter value) can be recovered using a localization property of the Christoffel-Darboux kernel of the approximate occupation measure, following the methodology proposed in \cite{marx2}. This powerful approximation method allows to capture efficiently discontinuities in the solutions.  Using the moment completion technique from \cite{henrion20}, one can also have access to other  quantities of interest, such as statistical moments of point-wise evaluations of the solution. 


\paragraph{Outline}
This paper is organized as follows. We first introduce some notations and the problem considered. Section \ref{sec:notOfSol} introduces different notions of solutions for parametrised scalar conservation laws and examines some links between these notions. Section \ref{sec:GMPAndRel} introduces the moment-SOS hierarchy and indicates how to perform several post-treatments such as retrieving the graph of the solution or estimating   statistical moments of the solution. Finally, Section \ref{sec:numExa} presents some numerical experiments. 

\subsection{Notations}
For $\mathcal X\subset\mathbb R^n$, with $n\in\mathbb N$, let $\mathcal{C}(\mathcal X)$, $\mathcal C_0(\mathcal X)$ and $\mathcal{C}_c^1(\mathcal{X})$ denote the space of functions on $\mathcal{X}$ that are continuous, continuous and vanishing at infinity and continuously differentiable with compact support, respectively. 
The sets of signed Borel measures and positive Borel measures are denoted $\mathcal{M}(\mathcal{X})$ and $\mathcal{M}(\mathcal{X})_+$, respectively. The set of probability measures on $\mathcal{X}$ is denoted by $\mathcal{P}(\mathcal{X})$.
The measure $\lambda_\mathcal X\in\mathcal{M}(\mathcal{X})_+$ denotes the Lebesgue measure on $\mathcal{X}$, and for $ B\subset\mathcal X$ a Borel set, $\vert B \vert$ denotes its Lebesgue measure.
Given a vector $\mathbf{w}=(w_1,\hdots, w_n)$, we denote by $\mathbb R[\mathbf w]$ the ring of real multivariate polynomials in the variable $w_1,\hdots,w_n$, and for a multi-index $\bm \alpha = (\alpha_1,\hdots,\alpha_n)$, $\mathbf w^{\bm \alpha} := w_1^{\alpha_1}\hdots w_n^{\alpha_n}$. 
Given a positive Borel measure $\mu$, we denote by $\supp(\mu)$ its support, defined as the smallest closed set whose complement has measure zero.

\subsection{Definition of the problem}
We consider parameter-dependent scalar hyperbolic conservation laws that are formulated as a Cauchy problem
\begin{subequations}\label{eq:parPDE}
\begin{align}
&\partial_t u(t,\mathbf x,{\bm\xi})+\dive_{\mathbf x}\mathbf f(u(t,\mathbf x,\bm\xi),\bm\xi)=0,\quad(t,\mathbf x,{\bm\xi})\in\mathbb R_+\times\mathbb R^n\times\mathbf{\mathbf{\Xi}},\label{subeq:parPDE}\\
&u(0,\mathbf x,{\bm\xi})=u_0(\mathbf x,{\bm\xi}),\quad (\mathbf x,{\bm\xi})\in\mathbb R^n\times \mathbf{\Xi},
\end{align}
\end{subequations}
where $t \in \mathbb R_+$ is the time variable, $\mathbf{x} \in \mathbb{R}^n$ is the space variable, and where $\bm\xi$ is a parameter in a parameter set $\mathbf\Xi\subset\mathbb R^p$, $p\in\mathbb N$. Then data are the flux $\mathbf f : \mathbb R \times \mathbf\Xi \to \mathbb R^n$  and the initial condition $u_0 : \mathbb R^n\times\mathbf\Xi\to \mathbb R$.

The parameter set $\mathbf\Xi$ is equipped with a probability measure $\rho$. We assume that $\mathbf\Xi$ is compact and the initial condition $u_0 \in\mathcal L^\infty(\mathbf\Xi;\mathcal L^\infty(\mathbb R^n))$. Moreover, we shall make the following assumptions on $\mathbf f$.
\begin{assumption}\label{ass:assOnf}
    For all $K\subset\mathbb R$ compact, there exists $C_K\in\mathbb R$ such that for all $u\in K$ and $\rho-\text{a.e.}$, $\Vert\mathbf f(u,\bm\xi)\Vert\leq C_K$.
    Moreover, $\rho-\text{a.e.}$, $\mathbf f(\cdot,\bm\xi)\in\mathcal C^1(\mathbb R;\mathbb R^n)$.
\end{assumption}
This assumption is satisfied for a polynomial function $f$, that will be assumed in the next section. 
Then for the sake of simplicity, we restrict our analysis to this setting. A more general setting can be found in \cite{mishra14}.

Note that the initial data $u_0$ and the flux $f$ may depend on distinct parameters but for the sake of clarity, and without loss of generality, we indicate a dependence on the same set of parameters $\bm \xi$.


\section{Notions of parameter-dependent solution}\label{sec:notOfSol}
\subsection{Parametric entropy solution}
We start by introducing the notion of parametric entropy weak solution, that is defined point-wise in the parameter domain. This may be considered as a strong-parametric solution, that is a straightforward notion of solution when a parameter is considered, see, e.g., \cite{mishra12} or \cite{mishra14}.

\begin{definition}[Entropy pairs]\label{def:entPai}
    Let $\eta$ be a locally Lipschitz and convex function from $\mathbb R$ to $\mathbb R$. Let $\mathbf q:\mathbb R\times\mathbf\Xi\rightarrow\mathbb R^n$ such that $\partial_u\mathbf q(u,\bm\xi)=\eta'(u)\partial_u\mathbf f(u,\bm\xi)$ for $\rho$-almost $\bm \xi$ and almost all $u\in\mathbb R$. Then $(\eta,\mathbf q)$ is called an entropy pair associated with conservation law~\eqref{subeq:parPDE}.
\end{definition}

We may notice that for an entropy pair $(\eta,\mathbf q)$, for $\rho$-almost all $\bm\xi$, $\mathbf q(\cdot,\bm\xi)$ is a locally Lipschitz function.

We now introduce three specific families of entropy pairs, each of them having a particular theoretical or numerical objective.
\begin{definition}[$\mathcal C^1$ family of entropy pairs]
    The $\mathcal C^1$ family of entropy pairs, denoted $\mathcal E_{\mathcal C}$, is defined as the set of entropy pairs $(\eta,\mathbf q)$ such that $\eta\in\mathcal C^1(\mathbb R)$ and for $\rho$-almost all $\bm\xi$, $\mathbf q(\cdot,\bm\xi)\in\mathcal C^1(\mathbb R)$.
\end{definition}
Note under assumption \ref{ass:assOnf}, if $(\eta,\mathbf q)$ is an entropy pair with $\eta\in \mathcal{C}^1$, then $(\eta,\mathbf q) \in\mathcal E_{\mathcal C} $.
The $\mathcal C^1$ family of entropy pairs is related to the (opposite of the) thermodynamic entropy and to the second law of thermodynamics, for fluid dynamics models. The conservation law~\eqref{subeq:parPDE}, for a fixed $\bm\xi$, can be seen as a simplification of such models.

\begin{definition}[Kruzhkov family of entropy pairs]\label{def:KruEntPai}
    The Kruzhkov family of entropy pairs is defined by
    \begin{equation*}\label{eq:famEntPai}
    \mathcal E_K:=\left\{(\eta_v,\mathbf q_v) : v\in\mathbb R\right\},
    \end{equation*}
    where for all $v\in\mathbb R$, for all $u\in\mathbb R$ and for $\rho$-almost all $\bm\xi$, $\eta_v(u):=\vert u-v\vert$ and $\mathbf q_v(u,\bm\xi):=\text{sign}(u-v)(\mathbf f(u,\bm\xi)-\mathbf f(v,\bm\xi))$.
\end{definition}

Compared to $\mathcal E_{\mathcal C}$, the family $\mathcal E_K$ has the advantage of being explicitly described and of carrying strong results coming from Kruzhkov's fundamental paper \cite{kruzhkov70}, allowing to obtain some theoretical results, such as uniqueness and stability.


\begin{definition}[Polynomial family of entropy pairs]
    The polynomial family of entropy pairs $\mathcal E_P$ is defined as the set of entropy pairs $(\eta,\mathbf q)$ such that $\eta$ is a polynomial function. If $\mathbf f$ is a polynomial function, then, for $\rho$-almost all $\bm\xi$, $\mathbf q(\cdot,\bm\xi)$ is also a polynomial function.
\end{definition}

In the case of a uniformly convex flux function, a single polynomial entropy can be sufficient to select the relevant solution (see e.g. \cite{delellis04,krupa19}).
Actually, our motivation is different.
In numerical experiments, we shall use subsets of the polynomial family of entropy pairs. The SOS-moment (Lasserre's) hierarchy, later exposed in this paper, relies indeed on a polynomial setting. Although it is possible to implement our numerical method with $\mathcal E_K$ as in \cite{marx1}, it is easier to do so with subsets of $\mathcal E_P$ when possible.

\begin{definition}[Parametric entropy solution]\label{def:parEntSol}
Consider a family of entropy pairs $\mathcal E$.
Let $u_0 : \mathbb R^n \times \mathbf \Xi \to \mathbb R$ such that 
 $u_0 (\cdot, \bm \xi) \in\mathcal L^\infty(\mathbb R^n)$ $\rho$-almost everywhere and $\mathbf f$ satisfy Assumption \ref{ass:assOnf}. A function $u : \mathbb R_+\times\mathbb R^n \times \mathbf \Xi \to \mathbb R $ such that for $\rho$-almost all $\bm\xi$, $u(\cdot,\cdot,\bm\xi)\in\mathcal L^\infty(\mathbb R_+\times\mathbb R^n)$ is a parametric entropy solution for $\mathcal E$ if, for all $(\eta,\mathbf q)\in\mathcal E$, for all non-negative test functions $\psi\in\mathcal C_c^1(\mathbb R_+\times\mathbb R^n)$ and $\rho$-almost all $\bm\xi$, it satisfies
\begin{equation}\label{eq:entSol}
\int_{\mathbb R_+}\int_{\mathbb R^n}\left(\partial_t\psi\eta(u)+\nabla_{\mathbf x}\psi\cdot\mathbf q(u,\bm\xi)\right)d\mathbf xdt+\int_{\mathbb R^n}\psi\eta(u_0)d\mathbf x\geq 0.
\end{equation}
\end{definition}

\begin{proposition}\label{pro:equiEkEC}
    A function $u$ is a parametric entropy solution for $\mathcal E_K$ if and only if it is a parametric entropy solution for $\mathcal E_{\mathcal C}$.
\end{proposition}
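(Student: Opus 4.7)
The plan is to prove the two implications separately, each relying on approximation/representation bridging the convex $\mathcal{C}^1$ entropies and the Kruzhkov entropies.

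For the direction $\mathcal{E}_{\mathcal{C}} \Rightarrow \mathcal{E}_K$, I would approximate each Kruzhkov entropy by a sequence of convex $\mathcal{C}^1$ entropies. A convenient choice is $\eta_{v,\varepsilon}(u) = \sqrt{(u-v)^2+\varepsilon^2} - \varepsilon$, which is convex, $\mathcal{C}^1$, and converges pointwise to $\eta_v(u)=|u-v|$ as $\varepsilon \to 0^+$, with $\eta_{v,\varepsilon}'(u) = (u-v)/\sqrt{(u-v)^2+\varepsilon^2} \to \operatorname{sign}(u-v)$ almost everywhere. Define $\mathbf{q}_{v,\varepsilon}(u,\bm\xi)=\int_v^u \eta_{v,\varepsilon}'(s)\partial_s \mathbf{f}(s,\bm\xi)\,ds$, so that $(\eta_{v,\varepsilon},\mathbf{q}_{v,\varepsilon})\in \mathcal{E}_{\mathcal{C}}$ and $\mathbf{q}_{v,\varepsilon}(u,\bm\xi)\to \mathbf{q}_v(u,\bm\xi)$ uniformly on compact subsets in $u$ (for $\rho$-a.e.\ $\bm\xi$), by dominated convergence using the continuity of $\partial_u \mathbf{f}(\cdot,\bm\xi)$. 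Since $u$, $u_0$ are essentially bounded and $\psi$ has compact support, dominated convergence lets me pass to the limit $\varepsilon\to 0$ in the entropy inequality for $(\eta_{v,\varepsilon},\mathbf{q}_{v,\varepsilon})$, yielding the Kruzhkov inequality.

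For the reverse direction $\mathcal{E}_K \Rightarrow \mathcal{E}_{\mathcal{C}}$, I would use the classical representation of a $\mathcal{C}^1$ convex function as a superposition of Kruzhkov entropies. Fix $(\eta,\mathbf{q})\in\mathcal{E}_{\mathcal{C}}$ and $M>0$ with $\|u(\cdot,\cdot,\bm\xi)\|_\infty, \|u_0(\cdot,\bm\xi)\|_\infty \le M$ for $\rho$-a.e.\ $\bm\xi$. Modifying $\eta$ outside $[-M-1,M+1]$ so as to remain convex and $\mathcal{C}^1$ but become affine at infinity does not change the inequality to be proved (both sides depend only on values of $\eta$ in $[-M,M]$). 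For such a modified $\eta$, the second derivative $\mu := \eta''$ is a non-negative Radon measure with compact support, and one has the representation
\begin{equation*}
\eta(u) = \alpha + \beta u + \tfrac{1}{2}\int_{\mathbb{R}}|u-v|\,d\mu(v), \qquad \mathbf{q}(u,\bm\xi) = \mathbf{q}(0,\bm\xi) + \beta(\mathbf{f}(u,\bm\xi)-\mathbf{f}(0,\bm\xi)) + \tfrac{1}{2}\int_{\mathbb{R}}\mathbf{q}_v(u,\bm\xi)\,d\mu(v),
\end{equation*}
with suitable constants $\alpha,\beta\in\mathbb{R}$. I integrate the Kruzhkov entropy inequality against $d\mu(v)\ge 0$ and use Fubini (justified by compact support of $\mu$ and $\psi$, and boundedness of $u$, $u_0$, $\mathbf{f}$) to obtain the inequality for the nonlinear part of $\eta$. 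The affine contribution $\alpha + \beta u$ produces the weak form of the PDE itself, which is a genuine equality; this equality follows from applying the Kruzhkov inequality at $v>M$ and at $v<-M$ and combining the two (in these ranges $|u-v|$ is affine in $u$, so both inequalities are available with opposite signs, giving equality). Summing the two contributions yields the $\mathcal{E}_{\mathcal{C}}$ inequality.

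The main obstacle will be justifying rigorously the representation formula for $\eta$ and especially for the associated flux $\mathbf{q}$, because $\mathbf{q}$ is defined only up to an additive function of $\bm\xi$ and depends on $\bm\xi$; one must track this integration constant carefully and verify that the formula for $\mathbf{q}$ above is compatible with $\partial_u \mathbf{q}=\eta'\partial_u \mathbf{f}$ for $\rho$-a.e.\ $\bm\xi$. The other technical point is the recovery of the linear (PDE) part from Kruzhkov inequalities at large $|v|$; this is standard but must be stated explicitly to close the argument.
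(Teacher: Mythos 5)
Your argument is correct and is essentially the classical proof that the paper does not spell out but delegates to Lemma 4.1 of Godlewski--Raviart: smoothing the Kruzhkov entropies by $\sqrt{(u-v)^2+\varepsilon^2}-\varepsilon$ in one direction, and superposing Kruzhkov pairs against the measure $\eta''$ (plus recovering the conservation law itself from the inequalities at large $|v|$) in the other. The only points to make explicit are the ones you already flag (the flux is determined only up to an additive function of $\bm\xi$, which is harmless since $\int_{\mathbb R^n}\nabla_{\mathbf x}\psi\,d\mathbf x=0$), together with a one-line remark that the $\rho$-null set in the Kruzhkov inequality may depend on $v$, so before integrating against $d\mu(v)$ for fixed $\bm\xi$ you should first take $v$ in a countable dense set and use the (Lipschitz) continuity in $v$ of the left-hand side of \eqref{eq:entSol}.
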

\begin{proof}
For the proof, see Lemma 4.1 in \cite{godlewski91} and the discussion which follows.  
\end{proof}

\begin{theorem}\label{th:parametric-entropy-weak-solution}
    If, for $\rho$-almost all $\bm\xi$, the initial data $u_0(\cdot,\bm\xi)  \in\mathcal L^\infty(\mathbb R^n)$ and if $\mathbf f$ satisfies Assumption \ref{ass:assOnf}, then problem \eqref{eq:parPDE} has a unique parametric entropy solution $u$ for $\mathcal E_K$, or equivalently, $\mathcal E_{\mathcal C}$. Moreover it satisfies for all $t\in\mathbb R_+$ and for $\rho$-almost all $\bm\xi$,
    \begin{equation}\label{eq:staPro}\Vert u(t,\cdot,\bm\xi)\Vert_{\mathcal L^\infty(\mathbb R^n)}\leq\Vert u_0 (\cdot , \bm \xi)\Vert_{\mathcal L^\infty(\mathbb R^n)}.
    \end{equation}
\end{theorem}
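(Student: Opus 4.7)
The strategy is to reduce the parametric problem to the classical (non-parametric) Kruzhkov theory applied pointwise in $\bm\xi$, and then upgrade the resulting family of solutions to a jointly measurable function. Fix a representative of $\bm \xi \mapsto (u_0(\cdot,\bm\xi), \mathbf f(\cdot,\bm\xi))$ and note that, $\rho$-almost everywhere, $u_0(\cdot,\bm\xi)\in\mathcal L^\infty(\mathbb R^n)$ and $\mathbf f(\cdot,\bm\xi)\in\mathcal C^1(\mathbb R;\mathbb R^n)$ with local bounds given by Assumption~\ref{ass:assOnf}. For each such $\bm\xi$, the classical Kruzhkov theorem (see, e.g., \cite{kruzhkov70}) yields a unique entropy weak solution $u(\cdot,\cdot,\bm\xi)\in\mathcal L^\infty(\mathbb R_+\times\mathbb R^n)$ of \eqref{eq:parPDE} for $\mathcal E_K$, satisfying \eqref{eq:entSol} against every non-negative $\psi\in\mathcal C_c^1(\mathbb R_+\times\mathbb R^n)$ and the maximum principle $\|u(t,\cdot,\bm\xi)\|_{\mathcal L^\infty}\le \|u_0(\cdot,\bm\xi)\|_{\mathcal L^\infty}$ for every $t\ge 0$. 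By Proposition~\ref{pro:equiEkEC}, this is equivalent to being a parametric entropy solution for $\mathcal E_{\mathcal C}$, so the choice of family is immaterial at this stage.

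The delicate point is that such a pointwise-in-$\bm\xi$ construction does not a priori produce a function $u:\mathbb R_+\times\mathbb R^n\times\mathbf\Xi\to\mathbb R$ measurable in all variables. To obtain joint measurability, I would invoke Kruzhkov's $\mathcal L^1$-contraction / continuous-dependence estimate: for any two choices of data $(u_0,\mathbf f)$ and $(\tilde u_0,\tilde{\mathbf f})$ with common $\mathcal L^\infty$ bound, the corresponding entropy solutions satisfy, on any compact set $K\subset\mathbb R^n$ and any $T>0$, an inequality of the form
\begin{equation*}
\|u(t,\cdot)-\tilde u(t,\cdot)\|_{\mathcal L^1(K)} \le \|u_0-\tilde u_0\|_{\mathcal L^1(K_T)} + C\, t\, \|\mathbf f-\tilde{\mathbf f}\|_{\mathrm{Lip}(I)}, \quad t\in[0,T],
\end{equation*}
where $K_T$ is a suitable enlargement of $K$ and $I$ is a compact interval containing the range of both solutions. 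Approximating $\bm\xi\mapsto (u_0(\cdot,\bm\xi),\mathbf f(\cdot,\bm\xi))$ by simple (piecewise constant in $\bm\xi$) data, for which joint measurability of the corresponding solution in $(t,\mathbf x,\bm\xi)$ is trivial, and passing to the limit using the above estimate, produces a jointly measurable representative of $u$.

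Once joint measurability is secured, the $\mathcal L^\infty$ bound \eqref{eq:staPro} and the entropy inequality \eqref{eq:entSol} are inherited $\rho$-almost everywhere from the pointwise construction, completing the proof. The main obstacle is precisely this measurability step: existence, uniqueness, and the maximum principle are directly quoted from Kruzhkov's theorem fibrewise, whereas passing from a $\bm\xi$-by-$\bm\xi$ selection to a function jointly measurable in $(t,\mathbf x,\bm\xi)$ requires the stability estimate above (or, equivalently, a construction via vanishing viscosity or a convergent finite volume scheme driven by the parameter-dependent data, both of which preserve measurability in $\bm\xi$ by standard continuous-dependence arguments).
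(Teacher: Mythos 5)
Your fibrewise argument is essentially the paper's proof: for $\rho$-almost every $\bm\xi$ the data satisfy the hypotheses of the classical scalar theory, which yields a unique entropy solution $u(\cdot,\cdot,\bm\xi)\in\mathcal L^\infty(\mathbb R_+\times\mathbb R^n)$ together with the maximum principle, and Proposition~\ref{pro:equiEkEC} transfers the statement between $\mathcal E_K$ and $\mathcal E_{\mathcal C}$ (the paper quotes Godlewski--Raviart for existence/uniqueness and Dafermos for \eqref{eq:staPro} rather than Kruzhkov's paper directly, which is immaterial).

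Where you diverge is in treating joint measurability in $(t,\mathbf x,\bm\xi)$ as a necessary further step. It is not: Definition~\ref{def:parEntSol} only requires a function $u$ such that, for $\rho$-almost all $\bm\xi$, $u(\cdot,\cdot,\bm\xi)\in\mathcal L^\infty(\mathbb R_+\times\mathbb R^n)$ and \eqref{eq:entSol} holds; no measurability in $\bm\xi$ is imposed, and the theorem claims none. The paper is explicit about this: Remark~\ref{rem:bonOf_u} states that the theorem provides no information on measurability, the following remark records that measurability under these hypotheses is an open question, and Bochner measurability of $\bm\xi\mapsto u(\cdot,\cdot,\bm\xi)$ is instead taken as an additional hypothesis where it is actually needed (Theorem~\ref{th:forEqu}). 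Moreover, your sketched fix would not go through under the stated assumptions: approximating $\bm\xi\mapsto(u_0(\cdot,\bm\xi),\mathbf f(\cdot,\bm\xi))$ by simple data and passing to the limit in the $\mathcal L^1$-contraction estimate presupposes some measurability/separability of these data maps (e.g.\ Bochner measurability of $\bm\xi\mapsto u_0(\cdot,\bm\xi)$ into $\mathcal L^1_{\mathrm{loc}}$ and of $\bm\xi\mapsto\mathbf f(\cdot,\bm\xi)$ into a locally Lipschitz class), which is not contained in ``$u_0(\cdot,\bm\xi)\in\mathcal L^\infty(\mathbb R^n)$ for $\rho$-a.e.\ $\bm\xi$'' together with Assumption~\ref{ass:assOnf}. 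So either drop that part or present it as a separate result under extra hypotheses; the fibrewise portion of your argument already proves the theorem as stated.
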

\begin{proof}
    From \cite[Theorem 5.2]{godlewski91}, we have that, for $\rho$-almost all $\bm\xi$, there exists a unique solution $u(\cdot,\cdot,\bm\xi)\in\mathcal L^\infty(\mathbb R_+\times\mathbb R^n)$ for $\mathcal E_{\mathcal C}$. Then, from Proposition \ref{pro:equiEkEC}, we have that, for $\rho$-almost all $\bm\xi$, there exists a unique solution $u(\cdot,\cdot,\bm\xi)\in\mathcal L^\infty(\mathbb R_+\times\mathbb R^n)$ for $\mathcal E_K$. The stability property \eqref{eq:staPro} is deduced  from \cite[Theorem 6.2.7]{dafermos10}.
\end{proof}
\begin{remark}\label{rem:bonOf_u}
Note that Theorem \ref{th:parametric-entropy-weak-solution} does not provide any information on the 
measurability of $u$.
Under the assumption that $\mathbf\Xi\ni\bm\xi\mapsto u(\cdot,\cdot,\bm\xi)\in\mathcal L^\infty(\mathbb R_+\times\mathbb R^n)$ is Bochner measurable and that $u_0\in\mathcal L^\infty(\mathbf\Xi;\mathcal L^\infty(\mathbb R^n))$, Theorem \ref{th:parametric-entropy-weak-solution} allows to deduce 
\begin{equation*}
    \Vert u \Vert_{\mathcal L^\infty(\mathbb R_+\times\mathbb R^n \times \mathbf\Xi)} \le \Vert u_0 \Vert_{\mathcal L^\infty(\mathbb R^n \times \mathbf\Xi)} .
\end{equation*}
We refer to \cite[Theorem 3.3]{mishra12} that provides measurability properties of $u$ under additional assumptions on $u_0$.
\end{remark}

\begin{remark}
    We might hope that imposing that $u_0\in\mathcal L^\infty(\mathbf\Xi;\mathcal L^\infty(\mathbb R^n))$ may be sufficient to have that $\bm\xi\mapsto u(\cdot,\cdot,\bm\xi)$ is Bochner measurable, but this has not been proved yet.
\end{remark}


\subsection{Weak-parametric entropy solutions}
The next notion of solution is weaker. While the parametric entropy solution adopts a pointwise point of view in the parameter domain, the following notion of solution is deduced by integration over the parameter domain.
\begin{definition}[Weak-parametric entropy solution]
Consider a family of entropy pairs $\mathcal E$.
Let $u_0\in\mathcal L^\infty(\mathbb R^n \times \mathbf\Xi)$ and $\mathbf f$ satisfy Assumption \ref{ass:assOnf}. A measurable function $u : \mathbb R_+\times\mathbb R^n\times\mathbf\Xi \to \mathbb R$ in $\mathcal L^\infty(\mathbb R_+\times\mathbb R^n\times\mathbf\Xi)$ is called a weak-parametric entropy solution for $\mathcal E$ if, for all $(\eta,\mathbf q)\in\mathcal E$ and all non-negative test functions $\phi\in\mathcal C(\mathbf\Xi;\mathcal C^1_c(\mathbb R_+\times\mathbb R^n))$, it satisfies 
    \begin{equation}\label{eq:ent-parSol}
        \int_{\mathbb R_+}\int_{\mathbb R^n}\int_{\mathbf\Xi}\left(\partial_t\phi\eta(u)+\nabla_{\mathbf x}\phi\cdot\mathbf q(u,\bm\xi)\right)d\rho(\bm\xi)d\mathbf xdt+\int_{\mathbb R^n}\int_{\mathbf\Xi}\phi\eta(u_0)d\rho(\bm\xi)d\mathbf x\geq 0.
    \end{equation}
\end{definition}

It is at first glance a weaker notion of solution, but 
we shall see that under certain assumptions, 
both notions of parametric entropy solution and weak-parametric entropy solution coincide.

\begin{theorem}\label{th:forEqu}
Assume that $u_0 \in \mathcal L^\infty(\mathbf\Xi,\mathcal L^\infty(\mathbb R^n))$ and $\mathbf f$ satisfies Assumption \ref{ass:assOnf}. A function $u$, such that $\bm\xi\mapsto u(\cdot,\cdot,\bm\xi)$ is Bochner measurable, is a  parametric entropy solution for $\mathcal E_K$ if and only if it is a weak-parametric entropy solution for $\mathcal E_K$.

   %
\end{theorem}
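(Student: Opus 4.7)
The plan is to prove the two implications separately: the forward direction is a direct Fubini-type argument, while the backward direction requires a localization in the parameter variable via tensor-product test functions and a density argument.

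For the forward implication, I would fix $(\eta_v,\mathbf q_v) \in \mathcal E_K$ and a non-negative $\phi \in \mathcal C(\mathbf\Xi;\mathcal C^1_c(\mathbb R_+\times\mathbb R^n))$. By continuity of $\phi$ on the compact set $\mathbf\Xi$, its support in $(t,\mathbf x)$ is contained in a common compact set $K$ and $\|\phi\|_{\mathcal C^1}$ is uniformly bounded. For $\rho$-almost all $\bm\xi$, the slice $\phi(\cdot,\cdot,\bm\xi)$ is an admissible test function in \eqref{eq:entSol}, so the integrand
\begin{equation*}
G(\bm\xi):=\int_{\mathbb R_+}\!\!\int_{\mathbb R^n}\!\bigl(\partial_t\phi\,\eta_v(u)+\nabla_{\mathbf x}\phi\cdot\mathbf q_v(u,\bm\xi)\bigr)d\mathbf x\,dt+\int_{\mathbb R^n}\!\phi\,\eta_v(u_0)\,d\mathbf x
\end{equation*}
is non-negative almost surely. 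Bochner measurability of $\bm\xi\mapsto u(\cdot,\cdot,\bm\xi)$ together with Theorem \ref{th:parametric-entropy-weak-solution} and Remark \ref{rem:bonOf_u} give a uniform $\mathcal L^\infty$ bound on $u$; combined with Assumption \ref{ass:assOnf} and the uniformly compact support of $\phi$, the function $G$ is bounded, measurable and $\rho$-integrable. Integrating against $\rho$ yields \eqref{eq:ent-parSol}.

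The reverse implication is the substantive step. I would fix countable dense families $\{v_l\}_l\subset\mathbb R$ and $\{\psi_k\}_k$ of non-negative elements of $\mathcal C^1_c(\mathbb R_+\times\mathbb R^n)$ (obtained, for instance, by exhausting $\mathbb R_+\times\mathbb R^n$ by compact sets $K_m$ and taking countable dense subsets of the separable spaces $\mathcal C^1(K_m)$). For any non-negative $\chi\in\mathcal C(\mathbf\Xi)$, the tensor product $\phi_{k,\chi}(t,\mathbf x,\bm\xi):=\psi_k(t,\mathbf x)\chi(\bm\xi)$ lies in $\mathcal C(\mathbf\Xi;\mathcal C^1_c(\mathbb R_+\times\mathbb R^n))$, so testing \eqref{eq:ent-parSol} against $\phi_{k,\chi}$ with $(\eta_{v_l},\mathbf q_{v_l})$ gives
\begin{equation*}
\int_{\mathbf\Xi}\chi(\bm\xi)\,F_{k,l}(\bm\xi)\,d\rho(\bm\xi)\geq 0,
\end{equation*}
where $F_{k,l}$ denotes the quantity $G$ from above with $\phi=\psi_k$ and $v=v_l$. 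Bochner measurability of $u$ makes $F_{k,l}$ measurable, and it is bounded as before. Since $\chi\geq 0$ is an arbitrary continuous function on the compact set $\mathbf\Xi$, a classical argument (inner regularity of $\rho$ plus Urysohn to approximate indicators of compact subsets of $\{F_{k,l}<-\epsilon\}$) forces $F_{k,l}\geq 0$ outside a $\rho$-null set $N_{k,l}$.

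Setting $N:=\bigcup_{k,l}N_{k,l}$, which is still $\rho$-null, fix $\bm\xi\notin N$. For arbitrary $v\in\mathbb R$ and non-negative $\psi\in\mathcal C^1_c(\mathbb R_+\times\mathbb R^n)$ one extracts $v_{l_j}\to v$ and $\psi_{k_j}\to\psi$ in $\mathcal C^1$ with uniformly bounded supports. The Kruzhkov maps $v\mapsto\eta_v$ and $v\mapsto\mathbf q_v(\cdot,\bm\xi)$ are locally Lipschitz, uniformly on the range of $u(\cdot,\cdot,\bm\xi)$ (which is essentially bounded), so dominated convergence lets us pass to the limit in $F_{k_j,l_j}(\bm\xi)\geq 0$ and recover \eqref{eq:entSol} pointwise in $\bm\xi$. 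The main obstacle, and what requires the Bochner measurability hypothesis, is to ensure that $F_{k,l}$ is jointly measurable in $\bm\xi$ so that the integrated inequality can be delocalized into a pointwise one; once this measurability and the uniform $\mathcal L^\infty$ control on $u$ are in hand, the density and limit arguments are routine.
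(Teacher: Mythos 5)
Your proof is correct and follows essentially the same route as the paper: the forward direction integrates the slice-wise inequality over $\mathbf\Xi$ after checking integrability via the $\mathcal L^\infty$ bound on $u$ (Remark \ref{rem:bonOf_u}) and Assumption \ref{ass:assOnf}, and the backward direction tests against tensor products $\psi(t,\mathbf x)\gamma(\bm\xi)$ and uses regularity of $\rho$ to localize in $\bm\xi$. The only difference is your final countable-density step producing a single null set valid for all $(v,\psi)$ simultaneously; the paper stops at a null set depending on $(v,\psi)$, which already matches the quantifier order in Definition \ref{def:parEntSol}, so your extra layer is a harmless strengthening rather than a necessity.
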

\begin{proof}
Let $u$  be a parametric entropy solution for $\mathcal E_K$, and let $v\in\mathbb R$ and $\phi\in\mathcal C(\mathbf\Xi;\mathcal C^1_c(\mathbb R_+\times\mathbb R^n))$. 
From Remark \ref{rem:bonOf_u}, and since $u_0 \in \mathcal L^\infty(\mathbf\Xi,\mathcal L^\infty(\mathbb R^n))$ and $\bm\xi\mapsto u(\cdot,\cdot,\bm\xi)$ is Bochner measurable, we have that $u\in\mathcal L^\infty(\mathbb R_+\times\mathbb R^n\times\mathbf\Xi)$. For $\rho$-almost all $\bm\xi$, $\phi(\cdot,\cdot,\bm\xi)\in\mathcal C^1_c(\mathbb R_+\times\mathbb R^n)$, thus $u$ verifies equation \eqref{eq:entSol} for $\psi= \phi(\cdot,\cdot,\bm\xi)$. Let us integrate equation \eqref{eq:entSol} on $\mathbf\Xi$. First, let us consider the terms where $\eta_v$ appears. From Remark \ref{rem:bonOf_u}, $u$ is essentially bounded. Since $\eta_v$ is continuous, $\eta_v\circ u$ is also essentially bounded. Since $\phi$ and its derivative are continuous in $\bm\xi$ and $\mathbf\Xi$ is a compact set, the terms where $\eta_v$ appears are integrable in $\bm\xi$. Recalling the definition of $\mathbf q_v$ in Definition \ref{def:KruEntPai}, we have that, for all $y\in\mathbb R$, $\rho$-almost everywhere, $\Vert\mathbf q_v(y,\bm\xi)\Vert\leq\Vert\mathbf f(y,\bm\xi)\Vert+\Vert\mathbf f(v,\bm\xi)\Vert$. From Assumption \ref{ass:assOnf}, and since from the same argument as for the terms where $\eta_v$ appears, $u$ is essentially bounded, there exists $C,C_v\in\mathbb R$ such that $\Vert\mathbf q_v(u,\bm\xi)\Vert\leq C+C_v$, $\rho$-almost everywhere. Thus, $\bm\xi\mapsto\int_{\mathbb R_+}\int_{\mathbb R^n}\nabla_{\mathbf x}\phi\cdot\mathbf q(u,\bm\xi)d\mathbf xdt$ is integrable and integrating on $\mathbf\Xi$ yields equation \eqref{eq:ent-parSol}.
    \\ 
    Conversely, let $u\in\mathcal L^\infty(\mathbb R_+\times\mathbb R\times\mathbf\Xi)$ be a weak-parametric entropy solution, $v\in\mathbb R$ and $\phi\in\mathcal C(\mathbf\Xi;\mathcal C^1_c(\mathbb R_+\times\mathbb R^n))$ such that $\phi(t,\mathbf x,\bm\xi):=\psi(t,\mathbf x)\gamma(\bm\xi)$ where $\psi\in\mathcal C^1_c(\mathbb R_+\times\mathbb R^n)$ and $\gamma\in\mathcal C(\mathbf\Xi)$. The function $u$ then verifies (\ref{eq:ent-parSol}) for our particular choice of $\phi$. Inequality \eqref{eq:ent-parSol} can be rewritten as $\int_{\mathbf\Xi}I(\bm\xi)\gamma(\bm\xi)d\rho(\bm\xi)\geq0$. Since $I$ is $\rho$-measurable, $\mathbbm 1_{I<0}$ is $\rho$-measurable. Moreover, from \cite[Theorem 12.7]{aliprantis06}, since $\rho$ is a finite Borel measure and $\mathbf\Xi$ is a Polish space, we have that $\rho$ is a regular measure. Thus, there exists a sequence $\gamma_n\in\mathcal C(\mathbf\Xi)$ such that $\Vert\mathbbm 1_{I<0}-\gamma_n\Vert_{\mathcal L^1(\mathbf\Xi)}\to 0$ as $n\to \infty$ and 
      $\int_{\mathbf\Xi}I(\bm\xi)\mathbbm 1_{I<0}(\bm\xi)d\rho(\bm\xi)\geq0$. Yet, $I\mathbbm 1_{I<0}\leq0$ $\rho$-almost everywhere. Thus, $I\geq0$ $\rho$-almost everywhere and it gives us that $u$ is a parametric entropy solution, which concludes  the proof.
 
\end{proof}

\subsection{Measure-valued solutions}

Following DiPerna~\cite{diperna85}, previous notions of solutions are extended to the weaker case of measure-valued solutions thanks to the notion of Young measure.
\begin{definition}[Young measure]
A Young measure on a Euclidean space $\mathcal X$ is a map $\mu:\mathcal X\rightarrow\mathcal P(\mathbb R),\tau\mapsto\mu_\tau$ such that for all $g\in\mathcal C_0(\mathbb R)$ the function $\tau\mapsto\int_{\mathbb R}g(y)\mu_\tau(dy)$ is measurable.
\end{definition}
From this, we can seek an even weaker notion of solution that is 
a Young measure $\mu_{(t,\mathbf x,\bm\xi)}$ which satisfies the following Cauchy problem:
\begin{subequations}\label{eq:genPDE}
\begin{align}
&\partial_t\langle\mu_{(t,\mathbf x,\bm\xi)},\id_{\mathbb R}\rangle+\dive_{\mathbf x}\langle\mu_{(t,\mathbf x,\bm\xi)},\mathbf f(\cdot,\bm \xi) \rangle=0,\quad(t,\mathbf x,\bm\xi)\in\mathbb R_+\times\mathbb R^n\times\mathbf\Xi,\label{subeq:conLaw}\\
&\mu_{(0,\mathbf x,\bm\xi)}=\sigma_0,\quad(\mathbf x,\bm\xi)\in\mathbb R^n\times\mathbf\Xi,
\end{align}
\end{subequations}
where $\langle\cdot,\cdot\rangle$ denotes the integration of a (vector-valued) function $g\in\mathcal C(\mathbb R ; \mathbb R^k)$ against a measure $\mu\in\mathcal M(\mathbb R)$, defined by
\begin{equation*}
\langle\mu,g\rangle:=\int_{\mathbb R}g(y)\mu(dy) \in \mathbb{R}^k,
\end{equation*}
while $\sigma_0=\delta_{u_0}$.
Equation \eqref{subeq:conLaw} has to be understood in a weak entropy sense, as explained in the following.
\begin{definition}[Parametric entropy measure-valued (MV) solution]
Consider a family of entropy pairs $\mathcal E$.
Let $\sigma_0$ be Young measure on $\mathbb R^n\times\mathbf\Xi$, and let $\mathbf f$ satisfying Assumption \ref{ass:assOnf}.
A Young measure $\mu$ is a parametric entropy MV solution to \eqref{eq:genPDE} for $\mathcal E$, if, for a family of entropy pairs $\mathcal E$, for all $(\eta,\mathbf q)\in\mathcal E$ and all non-negative test functions $\phi\in\mathcal C(\mathbf\Xi;\mathcal C^1_c(\mathbb R_+\times\mathbb R^n))$, it satisfies
\begin{multline}
\int_{\mathbb R_+}\int_{\mathbb R^n}\int_{\mathbf\Xi}\left(\partial_t\phi(t,\mathbf x,\bm\xi)\langle\mu_{(t,\mathbf x,\bm\xi)},\eta_v\rangle+\nabla_{\mathbf x}\phi(t,\mathbf x,\bm\xi)\cdot\langle\mu_{(t,\mathbf x,\bm\xi)},\mathbf q_v\rangle\right)d\rho(\bm\xi)d\mathbf xdt\\+
\int_{\mathbb R^n}\int_{\mathbf\Xi}\phi(0,\mathbf x,\bm\xi)\langle\sigma_0,\eta_v\rangle d\rho(\bm\xi)d\mathbf x\geq 0.
\end{multline}
\end{definition}

With the injection
\begin{align*}
    \mathcal L^\infty(\mathbb R_+\times\mathbb R^n\times\mathbf\Xi)&\rightarrow\left(\mathbb R_+\times\mathbb R^n\times\mathbf\Xi\rightarrow\mathcal P(\mathbb R)\right)\\
    u&\mapsto\left((t,\mathbf x,\bm\xi)\mapsto\delta_{u(t,\mathbf x,\bm\xi)}\right),
\end{align*}
we notice that, under the condition that $\sigma_0=\delta_{u_0}$, weak-parametric entropy solutions are parametric entropy MV solutions, 
but without further assumptions, parametric entropy MV solutions are not necessarily weak-parametric entropy solutions. However, the following result shows that the parametric entropy MV solution can be concentrated on the graph of the weak-parametric entropy solution.
\begin{theorem}\label{th:entMvSolCon}
    Let $u_0\in\mathcal L^\infty(\mathbb R^n \times \mathbf\Xi)$ and $\mathbf f$ satisfy Assumption \ref{ass:assOnf}. Let $u$ be the unique weak parametric entropy solution for $\mathcal E_K$ and $\mu$ be a parametric entropy MV solution for $\mathcal E_K$. If $\rho$-almost everywhere $\sigma_0=\delta_{u_0(\cdot)}$, then $\rho$-almost everywhere $\mu=\delta_{u(\cdot)}$.
\end{theorem}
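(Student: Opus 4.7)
\medskip

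\textbf{Proof plan.} My strategy is to adapt DiPerna's uniqueness argument for measure-valued solutions (see \cite{diperna85}) to the parametric setting, leveraging the Kruzhkov doubling-of-variables technique. The key observation is that $u$, viewed pointwise as a Dirac measure $(t,\mathbf x,\bm\xi) \mapsto \delta_{u(t,\mathbf x,\bm\xi)}$, is itself a parametric entropy MV solution for $\mathcal E_K$ (this is the remark made just before the theorem's statement). So I need an $L^1$-type contraction between any two MV solutions sharing the same initial measure $\sigma_0 = \delta_{u_0}$, specialized to this Dirac case.

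First, I would work with two independent sets of variables, $(t,\mathbf x)$ and $(s,\mathbf y)$, and use the Kruzhkov entropy inequality for $\mu$ at $(t,\mathbf x,\bm\xi)$ with scalar $v$ replaced by $u(s,\mathbf y,\bm\xi)$, and symmetrically for $u$ at $(s,\mathbf y,\bm\xi)$ with $v$ integrated against $\mu_{(t,\mathbf x,\bm\xi)}$. The test function is chosen of the form $\phi(t,\mathbf x,s,\mathbf y,\bm\xi) = \psi(t,\mathbf x,\bm\xi)\,\omega_\varepsilon(t-s)\,\Omega_\varepsilon(\mathbf x - \mathbf y)$, where $\omega_\varepsilon, \Omega_\varepsilon$ are standard mollifiers and $\psi \in \mathcal C(\mathbf\Xi;\mathcal C_c^1(\mathbb R_+\times\mathbb R^n))_+$. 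Because the MV inequality for $\mu$ holds for all $v \in \mathbb R$, one may fix $(s,\mathbf y,\bm\xi)$, apply the inequality with $v = u(s,\mathbf y,\bm\xi)$, then integrate in $(s,\mathbf y)$; symmetrically one integrates the entropy inequality for $u$ against $\mu_{(t,\mathbf x,\bm\xi)}$ in $y$. Integrability of all terms is ensured by Remark \ref{rem:bonOf_u}, the compactness of $\mathbf\Xi$, and the local boundedness of $\mathbf f$ from Assumption \ref{ass:assOnf}.

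Next, I would add the two inequalities and pass to the limit $\varepsilon \to 0$. The crucial algebraic cancellation is that $\mathbf q_v(w,\bm\xi) + \mathbf q_w(v,\bm\xi) = 0$ for all $v,w$, which kills the cross-flux terms arising from the doubling and leaves a Kruzhkov-type inequality for the quantity
\begin{equation*}
U(t,\mathbf x,\bm\xi) := \bigl\langle \mu_{(t,\mathbf x,\bm\xi)},\ \lvert\,\cdot\, - u(t,\mathbf x,\bm\xi)\rvert\bigr\rangle.
\end{equation*}
Concretely, I expect to obtain
\begin{equation*}
\int_{\mathbb R_+}\!\!\int_{\mathbb R^n}\!\!\int_{\mathbf\Xi}\bigl(\partial_t\psi\, U + \nabla_{\mathbf x}\psi\cdot Q\bigr)d\rho\,d\mathbf x\,dt + \int_{\mathbb R^n}\!\!\int_{\mathbf\Xi}\psi(0,\cdot,\bm\xi)\, U_0\, d\rho\,d\mathbf x \ge 0,
\end{equation*}
where $Q$ is the corresponding Kruzhkov flux and $U_0 = \langle\sigma_0, |\cdot - u_0|\rangle = 0$ by hypothesis. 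The initial-data terms cancel, and standard test-function choices (a cut-off in time combined with a Kruzhkov cone-type cut-off in space that exploits the finite propagation speed implied by Assumption \ref{ass:assOnf} and the uniform $L^\infty$ bound) give $U(t,\mathbf x,\bm\xi) = 0$ for $\lambda\otimes\rho$-almost every $(t,\mathbf x,\bm\xi)$, hence $\mu_{(t,\mathbf x,\bm\xi)} = \delta_{u(t,\mathbf x,\bm\xi)}$ $\rho$-a.e., which is the claim.

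The main obstacle I anticipate is not the algebraic cancellation itself, which is classical, but the careful justification of Fubini-type exchanges in $(t,\mathbf x,s,\mathbf y,\bm\xi)$: one must verify Bochner measurability of $\bm\xi \mapsto u(\cdot,\cdot,\bm\xi)$ transfers to measurability of $(t,\mathbf x,\bm\xi) \mapsto \langle \mu_{(t,\mathbf x,\bm\xi)}, \mathbf q_{u(s,\mathbf y,\bm\xi)}(\cdot,\bm\xi)\rangle$ jointly in all variables, with enough integrability to take $\varepsilon \to 0$. An approximation argument analogous to the one used at the end of the proof of Theorem \ref{th:forEqu} (localization via $\mathbf 1_{I<0}$ approximated by continuous functions on $\mathbf\Xi$) should then turn a $\rho$-integrated conclusion into the $\rho$-a.e. conclusion.
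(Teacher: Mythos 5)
Your route is genuinely different from the paper's. The paper does not redo any doubling-of-variables argument: its proof has exactly two steps, (i) de-parametrize the MV entropy inequality — using the same separable-test-function/localization argument as in the second half of the proof of Theorem \ref{th:forEqu} — to obtain, for $\rho$-almost every fixed $\bm\xi$, the Kruzhkov MV inequality in $(t,\mathbf x)$ alone, and (ii) observe that for each such fixed $\bm\xi$ one is in the parameter-independent setting, so the concentration result of \cite[Theorem 1]{marx1} together with DiPerna \cite{diperna85} gives $\mu_{(\cdot,\cdot,\bm\xi)}=\delta_{u(\cdot,\cdot,\bm\xi)}$ directly. Your plan instead re-proves the DiPerna contraction in the parametric setting by carrying $\bm\xi$ through the doubling; that is more self-contained but much heavier, and all the delicate work you anticipate (Fubini exchanges, passing $\varepsilon\to 0$, the cone cut-off) is precisely what the citation lets the paper avoid.

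Two concrete problems in your sketch. First, the ``crucial algebraic cancellation'' you invoke is false: the Kruzhkov fluxes are \emph{symmetric}, not antisymmetric, since
\begin{equation*}
\mathbf q_w(v,\bm\xi)=\mathrm{sign}(v-w)\bigl(\mathbf f(v,\bm\xi)-\mathbf f(w,\bm\xi)\bigr)=\mathrm{sign}(w-v)\bigl(\mathbf f(w,\bm\xi)-\mathbf f(v,\bm\xi)\bigr)=\mathbf q_v(w,\bm\xi),
\end{equation*}
so $\mathbf q_v(w,\bm\xi)+\mathbf q_w(v,\bm\xi)=2\,\mathbf q_v(w,\bm\xi)\neq 0$ in general. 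In the classical doubling argument the cross terms are not killed by antisymmetry; the symmetry is what lets the two flux contributions combine into a single term multiplied by $(\nabla_{\mathbf x}+\nabla_{\mathbf y})\phi$, and it is the choice $\phi\supset\Omega_\varepsilon(\mathbf x-\mathbf y)$, for which $\nabla_{\mathbf x}\Omega_\varepsilon(\mathbf x-\mathbf y)=-\nabla_{\mathbf y}\Omega_\varepsilon(\mathbf x-\mathbf y)$, that makes them cancel. As written, the central step of your argument rests on a wrong identity, even though the standard mechanism repairs it. Second, you cannot ``fix $(s,\mathbf y,\bm\xi)$ and apply the inequality with $v=u(s,\mathbf y,\bm\xi)$'': the defining inequality of a parametric entropy MV solution is stated only for constant $v\in\mathbb R$ and with the integral over $\bm\xi$ outside, so a $\bm\xi$-dependent choice of $v$ is not admissible until you have first derived the pointwise-in-$\bm\xi$ inequality. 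That localization (your closing remark) must therefore be the \emph{first} step, not a final touch-up — and it is exactly the paper's step (i); once it is done, you could equally well stop and cite \cite{marx1} and \cite{diperna85} instead of re-running the doubling.
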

\begin{proof}
    First, we may note that from the same arguments that those presented in the second part of the proof of Theorem \ref{th:forEqu}, a parametric entropy MV solution $\mu$ for $\mathcal E_K$ verifies the following inequality, for all $v\in\mathbb R$, all non-negative test functions $\psi\in\mathcal C^1_c(\mathbb R_+\times\mathbb R^n)$ and $\rho$-almost all $\bm\xi$:
    \begin{multline}
        \int_{\mathbb R_+}\int_{\mathbb R^n}\left(\partial_t\psi(t,\mathbf x)\langle\mu_{(t,\mathbf x,\bm\xi)},\eta_v\rangle+\nabla_{\mathbf x}\psi(t,\mathbf x)\cdot\langle\mu_{(t,\mathbf x,\bm\xi)},\mathbf q_v\rangle\right)d\mathbf xdt\\+
        \int_{\mathbb R^n}\psi(0,\mathbf x)\langle\sigma_0,\eta_v\rangle d\mathbf x\geq 0.
    \end{multline}
    Then, for $\rho$-almost all $\bm\xi$, $\mu(\cdot,\cdot,\bm\xi)$ is an entropy MV solution of the initial problem with a fixed parameter $\bm\xi$, that is a parameter-independent problem studied in \cite{marx1}. Then, from \cite[Theorem 1]{marx1} and \cite{diperna85}, we have that $\rho$-almost everywhere, if $(\sigma_0)_{(\cdot,\bm\xi)}=\delta_{u(\cdot,\bm\xi)}$, then $\mu_{(\cdot,\cdot,\bm\xi)}=\delta_{u(\cdot,\cdot,\bm\xi)}$, with $u(\cdot,\cdot,\bm \xi)$ the weak parametric entropy solution. 
\end{proof}

\subsection{Restrictions to compact hypercubes}

In order to extend the strategy developed in~\cite{marx1}, it is mandatory to work on compact sets. Whereas introducing compact domains in time and in the parameter set is trivial, the restriction to bounded space domains has to be carefully done. To simplify the setting and to avoid the problem of introducing boundary conditions to conservation laws, see for instance \cite{bardos_first_1979,otto_initial-boundary_1996,necas_weak_1996}, we assume
that the solution has no interaction with its boundary, i.e. the solution is known on the boundary of the spatial domain at any time. Let
\begin{equation}
\mathbf T:=[0,T],\quad\mathbf X:=[L_1,R_1]\times\cdots\times[L_n,R_n],\quad \mathbf \Xi:=[0,1]^p
\end{equation}
be the respective domains of time $t$, space variable $\mathbf x$ and parameter $\bm\xi$ for fixed (but arbitrary) constants $T$, $(L_i)_{i=1}^n$ and $(R_i)_{i=1}^n$. The absence of interaction with the boundary is translated as follows: 
initial data $u_0$ that we consider are the restrictions to $\mathbf X\times\mathbf\Xi$ of initial data defined on $\mathbb R^n\times\mathbf\Xi$ such that, considering the associated weak parametric entropy solution $u$, there exists $\epsilon>0$ such that in $\mathbf \partial\bm X_\epsilon:=(\partial\mathbf X+B(0,\epsilon))\cap\mathbf X$,  and for all $t\in\mathbf T$ and $\rho$-almost all $\bm\xi, u(t,\cdot,\bm\xi)=u_0(\cdot,\bm\xi)$, i.e. the weak parametric entropy solution is stationary   on $\partial\bm X_\epsilon$. This framework is the one we shall use in the following.

From \eqref{eq:staPro}, we can consider that $u$ takes values in the following compact set
\begin{equation}\label{eq:U}
\mathbf U:=[\underline u,\overline u],
\end{equation}
where the bounds are $\underline u:=\essinf_{\mathbf X,\mathbf\Xi}u_0$ and $\overline u:=\esssup_{\mathbf X,\mathbf\Xi}u_0$.

This leads us to reformulate the problem on the restricted domain.
\begin{proposition}[Parametric entropy measure-valued solution on compact hypercubes]
Consider a family of entropy pair $\mathcal E$.
Let $\mu:(t,\mathbf x,\bm\xi)\in\mathbb R_+\times\mathbb R^n\times\mathbf\Xi\mapsto\mu_{(t,\mathbf x,\bm\xi)}\in\mathcal M_+(\mathbf U)$ be a parametric entropy measure-valued solution for $\mathcal E$. Then it satisfies for all $(\eta,\mathbf q)\in\mathcal E$ and for all non-negative test functions $\phi\in\mathcal C(\mathbf\Xi;\mathcal C^1(\mathbf T\times\mathbf X))$,
\begin{multline}\label{eq:mvEntCom}
\int_{\mathbf T}\int_{\mathbf X}\int_{\mathbf\Xi}\left(\partial_t\phi(t,\mathbf x,\bm\xi)\langle\mu_{(t,\mathbf x,\bm\xi)},\eta\rangle+\nabla_{\mathbf x}\phi(t,\mathbf x,\bm\xi)\cdot\langle\mu_{(t,\mathbf x,\bm\xi)},\mathbf q\rangle\right)d\rho(\bm\xi)d\mathbf xdt\\
+\int_{\mathbf X}\int_{\mathbf\Xi}\phi(0,\mathbf x,\bm\xi)\langle\sigma_0,\eta\rangle d\rho(\bm\xi)d\mathbf x-\int_{\mathbf X}\int_{\mathbf\Xi}\phi(T,\mathbf x,\bm\xi)\langle\sigma_T,\eta\rangle d\rho(\bm\xi)d\mathbf x\\
-\int_{\mathbf T}\int_{\mathbf X} \int_{\mathbf\Xi} \phi(t,\reviewde{\mathbf x},\bm\xi) \langle \bm \gamma , \mathbf q(\cdot,\bm \xi) \rangle 
d\rho(\bm\xi)d\mathbf x dt\geq 0
\end{multline}
where $\sigma_0$ and 
 $\sigma_T$ are Young measures supported on $\mathbf X\times\mathbf\Xi$, and where $\bm \gamma$ is such that
\begin{align*}&\int_{\mathbf T}\int_{\mathbf X} \int_{\mathbf\Xi} \phi(t,\reviewde{\mathbf x},\bm\xi) \langle \bm \gamma , \mathbf q(\cdot,\bm \xi) \rangle d\rho(\bm\xi) \reviewun{d\mathbf x}dt= \\
&\sum_{i=1}^n\int_{\mathbf T}\int_{\reviewde{\mathbf X}} \int_{\mathbf\Xi}\phi(t,\mathbf x_{L,i},\bm\xi)\langle\gamma_{\reviewde{L},i},q_i(\cdot,\bm\xi)\rangle d\rho(\bm\xi) \reviewde{\delta_{\Gamma_{L,i}}(d\mathbf x)} dt 
 \\
&-\sum_{i=1}^n \int_{\mathbf T}\int_{\reviewde{\mathbf X}} \int_{\mathbf\Xi}\phi(t,\mathbf x_{R,i},\bm\xi)\langle\gamma_{\reviewde{R},i},q_i(\cdot,\bm\xi)\rangle d\rho(\bm\xi) \reviewde{\delta_{\Gamma_{R,i}}(d\mathbf x)} dt,
\end{align*} 
 where for each $1\le i\le n$, $\gamma_{L,i}$ and $\gamma_{R,i}$ are boundary measures supported on $\mathbf T\times \Gamma_{L,i}\times \mathbf\Xi$ and $\mathbf T\times \Gamma_{R,i}\times \mathbf\Xi$ respectively, with  $\Gamma_{L,i} = \{ \mathbf x \in \partial \mathbf X : x_i = L_i\}$ and $\Gamma_{R,i} = \{ \mathbf x \in \partial\mathbf X : x_i = R_i\}$,  $\mathbf x_{B,i}$ denotes the vector $(x_1,\dots,x_{i-1},\Gamma_{B,i},x_{i+1},\dots,x_n)$ for $B\in\{L,R\}$.
\end{proposition}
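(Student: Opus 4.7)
The plan is to derive \eqref{eq:mvEntCom} from the original parametric entropy MV inequality (which is only stated for test functions compactly supported in $\mathbb R_+\times\mathbb R^n$) by a cutoff-and-limit argument, introducing the boundary measures $\sigma_T$, $\gamma_{L,i}$, $\gamma_{R,i}$ through a Riesz representation of the limiting boundary functionals.

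First, given $\phi\in\mathcal C(\mathbf\Xi;\mathcal C^1(\mathbf T\times\mathbf X))$, I would use a standard extension operator in the $(t,\mathbf x)$ variables (continuous in $\bm\xi$) to produce $\tilde\phi\in\mathcal C(\mathbf\Xi;\mathcal C^1(\mathbb R_+\times\mathbb R^n))$ agreeing with $\phi$ on $\mathbf T\times\mathbf X$. Next, I introduce a one-parameter family of smooth cutoffs $\chi_\varepsilon\in\mathcal C^1_c(\mathbb R_+\times\mathbb R^n)$ with $\chi_\varepsilon\equiv 1$ on $[0,T-\varepsilon]\times[L_1+\varepsilon,R_1-\varepsilon]\times\cdots\times[L_n+\varepsilon,R_n-\varepsilon]$ and $\supp\chi_\varepsilon$ contained in a small neighbourhood of $\mathbf T\times\mathbf X$. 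Applying the parametric entropy MV inequality to the admissible test function $\chi_\varepsilon\tilde\phi$ and expanding $\partial_t(\chi_\varepsilon\tilde\phi)$ and $\nabla_{\mathbf x}(\chi_\varepsilon\tilde\phi)$ via the Leibniz rule splits the resulting integral into a bulk part (involving $\chi_\varepsilon\partial_t\tilde\phi$ and $\chi_\varepsilon\nabla_{\mathbf x}\tilde\phi$) plus a cutoff-derivative part (involving $\tilde\phi\partial_t\chi_\varepsilon$ and $\tilde\phi\nabla_{\mathbf x}\chi_\varepsilon$), together with the initial term on $\{t=0\}$ which directly yields the $\sigma_0$ contribution.

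Passing to the limit $\varepsilon\to 0$, the bulk part converges by dominated convergence to the first line of \eqref{eq:mvEntCom}, using that $\mu_{(t,\mathbf x,\bm\xi)}\in\mathcal M_+(\mathbf U)$ with $\mathbf U$ compact, so that $\langle\mu_{(t,\mathbf x,\bm\xi)},\eta\rangle$ and $\langle\mu_{(t,\mathbf x,\bm\xi)},\mathbf q\rangle$ are uniformly bounded. The cutoff-derivative part produces contributions concentrated near $\{t=T\}$ and near each face $\mathbf T\times\Gamma_{B,i}\times\mathbf\Xi$, $B\in\{L,R\}$, and each of these defines (after passing to an accumulation point, if needed) a bounded linear functional on $\mathcal C(\mathbf X\times\mathbf\Xi)$ or $\mathcal C(\mathbf T\times\Gamma_{B,i}\times\mathbf\Xi)$ respectively. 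By the Riesz representation theorem, the terminal functional is represented by a finite nonnegative Borel measure on $\mathbf X\times\mathbf\Xi$; a disintegration along the $(\mathbf x,\bm\xi)$-marginal then gives the Young measure $\sigma_T$. An analogous argument provides the boundary Young measures $\gamma_{L,i},\gamma_{R,i}$ paired against $\mathbf q(\cdot,\bm\xi)$, producing exactly the boundary terms written in the statement.

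The main obstacle is the last step: proving that the limiting boundary functionals are indeed represented by bona fide (Young) measures rather than mere distributions of order one, and doing so in a way consistent across all entropy pairs $(\eta,\mathbf q)\in\mathcal E$ simultaneously. This relies on (i) uniform $\mathcal L^\infty$ control of $\langle\mu,\eta\rangle$ and $\langle\mu,\mathbf q\rangle$ thanks to the boundedness of $\mathbf U$ and Assumption \ref{ass:assOnf}, which gives uniform equicontinuity of the linearly parametrised family of cutoff-derivative integrals; and (ii) a diagonal/weak-$*$ compactness argument ensuring that the measures $\sigma_T$, $\gamma_{L,i}$, $\gamma_{R,i}$ can be selected once and for all, independently of the particular $(\eta,\mathbf q)$. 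Standard measurable selection and disintegration (applicable since $\mathbf X\times\mathbf\Xi$ and the boundary faces are Polish) then upgrade these Borel measures to the Young measures appearing in \eqref{eq:mvEntCom}.
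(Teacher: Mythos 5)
The paper does not actually prove this proposition: it is presented as a reformulation in which $\sigma_T$ and the boundary measures $\gamma_{L,i},\gamma_{R,i}$ enter as \emph{new unknowns} of the moment problem, with the companion Lemma~\ref{lem:mvCom} justified only by a citation to \cite{eymard00}, and with the boundary behaviour controlled by the standing assumption that the solution is stationary in an $\epsilon$-neighbourhood of $\partial\mathbf X$, so that (as in Remark~\ref{rem:bouCon}) the spatial traces are known Dirac measures $\delta_{\gamma_{B,i}(u_0)}$ that can be imposed as data. Your cutoff-and-limit argument therefore supplies something the paper leaves implicit: an actual construction of admissible $\sigma_T$ and $\gamma_{B,i}$ for an arbitrary parametric entropy MV solution, via the Leibniz split of $\chi_\varepsilon\tilde\phi$, dominated convergence for the bulk term (legitimate since $\mathbf U$ is compact and Assumption~\ref{ass:assOnf} holds), and weak-$*$ compactness for the concentrating layers. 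What your route buys is generality (no a priori knowledge of the solution near $\partial\mathbf X$ is needed, only existence of \emph{some} boundary measures, which is all \eqref{eq:mvEntCom} asserts); what the paper's route buys is that the boundary measures are explicitly identified and can be prescribed as constraints in the GMP. One step of yours should be tightened: diagonalizing ``consistently across all entropy pairs in $\mathcal E$'' is the wrong object to extract over when $\mathcal E$ is uncountable (e.g.\ $\mathcal E_K$); instead, extract a single subsequence along which the layer integrals converge for every $g$ in a countable dense subset of $\mathcal C(\mathbf U)$ and every $\phi$ in a countable dense subset of the test space, use the uniform bound $C\Vert g\Vert_\infty\Vert\phi\Vert_\infty$ to extend to all $g$ and $\phi$, and then observe that for each face the map $g\mapsto\lim_\varepsilon\int\tilde\phi\,\partial_{x_i}\chi_\varepsilon\langle\mu,g\rangle$ is positive (because $\partial_{x_i}\chi_\varepsilon$ has a fixed sign near that face) and normalized (taking $g\equiv1$ gives $\pm\int\phi$ since $\mu$ is probability-valued and $\int\vert\partial_{x_i}\chi_\varepsilon\vert\to1$); Riesz representation and disintegration against the marginal $dt\,d\mathbf x\,d\rho$ then yield genuine Young measures, which one finally pairs with $\eta$ and $q_i(\cdot,\bm\xi)$ by density. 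With that repair your argument is complete and requires no strong-trace theory, precisely because the proposition does not claim the boundary measures coincide with the trace of $u$.
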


\begin{lemma}\label{lem:mvCom}
    Consider a family of entropy pairs $\mathcal E$ such that either $(\id,\mathbf f)\in\mathcal E$ or $\mathcal E=\mathcal E_K$. Let $\mu$ be a parametric entropy MV solution for $\mathcal E$. Then for all test functions $\phi\in\mathcal C(\mathbf\Xi;\mathcal C^1(\mathbf T\times\mathbf X))$, it satisfies
    \begin{multline}\label{eq:mvCom}
        \int_{\mathbf T}\int_{\mathbf X}\int_{\mathbf\Xi}\left(\partial_t\phi(t,\mathbf x,\bm\xi)\langle\mu_{(t,\mathbf x,\bm\xi)},\id\rangle+\nabla_{\mathbf x}\phi(t,\mathbf x,\bm\xi)\langle\mu_{(t,\mathbf x,\bm\xi)},\mathbf f\rangle\right)d\rho(\bm\xi)d\mathbf xdt\\
        +\int_{\mathbf X}\int_{\mathbf\Xi}\phi(0,\mathbf x,\bm\xi)\langle\sigma_0,\id\rangle d\rho(\bm\xi)d\mathbf x-\int_{\mathbf X}\int_{\mathbf\Xi}\phi(T,\mathbf x,\bm\xi)\langle\sigma_T,\id\rangle d\rho(\bm\xi)d\mathbf x\\
        - \int_{\mathbf T}\int_{\mathbf X} \int_{\mathbf\Xi} \phi(t,\mathbf x,\bm\xi) \langle \bm \gamma , \mathbf f(\cdot,\bm \xi) \rangle d\rho(\bm\xi) d \mathbf xdt=0.
    \end{multline}
\end{lemma}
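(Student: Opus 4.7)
The plan is to upgrade the entropy inequality \eqref{eq:mvEntCom}, valid only for \emph{non-negative} test functions, into the conservation equality \eqref{eq:mvCom} valid for \emph{arbitrary} test functions. The strategy is to apply \eqref{eq:mvEntCom} with two entropy pairs that are oppositely oriented on the support of the measures, so that together they sandwich the target expression $J[\phi]$ (the left-hand side of \eqref{eq:mvCom}) between $\ge 0$ and $\le 0$; the non-negativity of $\phi$ will then be removed by linearity.

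\textbf{Case $(\id, \mathbf f) \in \mathcal E$.} Both $\eta = \id$ and $\eta = -\id$ are affine and hence trivially convex, with associated entropy fluxes $\pm\mathbf f$, so $(\pm\id,\pm\mathbf f)$ are both entropy pairs in the sense of Definition \ref{def:entPai}. Reading the hypothesis in the natural way so that the affine pair is available in both orientations, I would apply \eqref{eq:mvEntCom} with each pair against a common non-negative $\phi$ to obtain $J[\phi]\ge 0$ and $-J[\phi]\ge 0$, and conclude $J[\phi]=0$ on non-negative $\phi$.

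\textbf{Case $\mathcal E = \mathcal E_K$.} The key observation is that for $v\notin \mathbf U=[\underline u,\overline u]$, the Kruzhkov entropy $\eta_v$ is affine on the support of every measure involved. Fix $v_1<\underline u$, so that $\eta_{v_1}(u)=u-v_1$ and $\mathbf q_{v_1}(u,\bm\xi)=\mathbf f(u,\bm\xi)-\mathbf f(v_1,\bm\xi)$ for $u\in\mathbf U$. Substituting into \eqref{eq:mvEntCom} and using that $\mu_{(t,\mathbf x,\bm\xi)}$, $\sigma_0$, $\sigma_T$ and the boundary Young measures $\gamma_{L,i},\gamma_{R,i}$ are probability measures (so constants in $u$ pass through the brackets $\langle\cdot,\cdot\rangle$), the inequality rewrites as
\[
J[\phi] \;+\; R[\phi;v_1] \;\ge\; 0,
\]
the pure-constant terms in $v_1$ having cancelled by the fundamental theorem of calculus in $t$, and where
\[
R[\phi;v_1] \;=\; -\int \nabla_{\mathbf x}\phi\cdot\mathbf f(v_1,\bm\xi)\,d\rho\,d\mathbf x\,dt \;+\; \int \phi\,\langle\bm\gamma,\mathbf f(v_1,\bm\xi)\rangle\,d\rho\,d\mathbf x\,dt
\]
collects the terms proportional to the constant-in-$u$ flux $\mathbf f(v_1,\bm\xi)$. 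The cancellation $R[\phi;v_1]=0$ is an instance of the divergence theorem applied to the $\mathbf x$-constant vector field $\mathbf f(v_1,\bm\xi)$: its volume integral against $\nabla_{\mathbf x}\phi$ equals the surface integral of $\phi\,\mathbf f(v_1,\bm\xi)\cdot\mathbf n$ over $\partial\mathbf X$, which matches the $\bm\gamma$-contribution once one reads $\bm\gamma$ as the outward-flux boundary Young measure described in the proposition. Hence $J[\phi]\ge 0$. I would then repeat the argument with $v_2>\overline u$ (so that $\eta_{v_2}(u)=v_2-u$ and $\mathbf q_{v_2}(u,\bm\xi)=\mathbf f(v_2,\bm\xi)-\mathbf f(u,\bm\xi)$ on $\mathbf U$) to obtain the reverse inequality $-J[\phi]\ge 0$, and conclude $J[\phi]=0$ for non-negative $\phi$.

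\textbf{Extension and main obstacle.} Since $\mathbf T\times\mathbf X\times\mathbf\Xi$ is compact, any $\phi\in\mathcal C(\mathbf\Xi;\mathcal C^1(\mathbf T\times\mathbf X))$ is bounded; setting $M:=\Vert\phi\Vert_\infty$ makes both $\phi+M$ and $M$ non-negative, so the already-established identity gives $J[\phi+M]=J[M]=0$, and by linearity of $J$ in $\phi$ one gets $J[\phi]=J[\phi+M]-J[M]=0$. The principal technical hurdle is the cancellation $R[\phi;v]=0$ in the second case: it requires a careful bookkeeping of the signs, supports, and probability-mass property of the boundary Young measures $\gamma_{L,i},\gamma_{R,i}$ on the faces $\Gamma_{L,i},\Gamma_{R,i}$ of $\partial\mathbf X$, so that the boundary integral produced by the $\bm\gamma$-term matches exactly the divergence-theorem surface integral of the $\mathbf x$-constant vector field $\mathbf f(v,\bm\xi)$.
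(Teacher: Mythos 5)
Your proof is correct and fleshes out exactly the argument the paper leaves implicit: the published proof is a one-line citation to \cite{eymard00} together with the remark that ``the case of the Kruzhkov's entropies is retrieved thanks to the boundedness of $\mathbf U$'', which is precisely your mechanism of taking $v$ outside $\mathbf U$ so that $\eta_v$ becomes affine of either sign, the constant terms cancelling by the fundamental theorem of calculus in $t$ and the divergence theorem in $\mathbf x$ (using that $\mu$, $\sigma_0$, $\sigma_T$ and the $\gamma_{B,i}$ are probability-valued). Your removal of the sign restriction on $\phi$ via $\phi=(\phi+\Vert\phi\Vert_\infty)-\Vert\phi\Vert_\infty$ and linearity is also fine. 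The only caveat, which you rightly flag, is that the hypothesis $(\id,\mathbf f)\in\mathcal E$ literally yields only one inequality; equality requires reading it as including both orientations $(\pm\id,\pm\mathbf f)$ (both affine, hence both convex), which is clearly the paper's intent.
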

\begin{proof}
The proof of this lemma is discussed in \cite{eymard00}, and the case of the Kruzhkov's entropies is retrieved thanks to the boundedness of $\mathbf U$. 
\end{proof}
\begin{remark}\label{rem:whyImpEquAndIne}
Our numerical method shall not use all inequalities \eqref{eq:mvEntCom}. Therefore, imposing \eqref{eq:mvCom} as an additional constraint may be beneficial in practice, see further discussion in Section \ref{sec:GMPAndRel}.
\end{remark}

\begin{remark}[Imposing constraints on the boundary]\label{rem:bouCon}
To ensure concentration of $\mu_{(t,x,\bm\xi)}$, in addition to the condition $\sigma_0=\delta_{u_0(\cdot)}$, one may impose conditions on the boundary measures $(\gamma_{L,i})_{i=1}^n$ and $(\gamma_{R,i})_{i=1}^n$.  The choice of boundary condition allows to ensure the absence of interaction with the boundary. We shall make the assumption that the trace of $u_0$ on $\Gamma_{B,i}$, noted $\gamma_{B,i}(u_0)$ exists for all $B\in\lbrace L,R\rbrace$ and all $1\leq i\leq n$, and we at the same time notice that this trace does not depend on $\bm\xi\in\mathbf\Xi$. We then want to impose that $\gamma_{B,i}(\mathbf x)=\delta_{\gamma_{B,i}(u_0)(\mathbf x)}$ for almost all $\mathbf x\in\mathbf X$, for all $1\leq i\leq n$ and for $B\in\lbrace L,R\rbrace$.
\end{remark}
\reviewun{\begin{remark}[General Dirichlet boundary conditions]
        The first entropy weak formulation with Dirichlet boundary conditions has been proposed by \cite{BLN}, but it requires the existence of strong traces, so that it cannot be generalized to MV solutions. Later, Otto introduced boundary entropy-flux pairs in his PhD thesis. A short presentation is given in \cite{otto} while an extended analysis is available in \cite{necas_weak_1996}. This notion allowed Vovelle in \cite{vovelle} to show that the Otto's framework can be reformulated via Kruzhkov semi-entropies and to extend  the wellposedness theory of conservation laws on bounded domains to entropy process solutions, which is a similar concept to MV solutions (see also \cite{panov} for more results). Lastly, in order to express such a formulation as moment contraints, it suffices to follow the approach of doubling the number of measures in \cite{marx1}, recalled in Appendix \ref{ap:defSplMea} for the case of Kruzhkov entropies.
    \end{remark}}    
Let $\nu\in\mathcal M(\mathbf K)_+$ with $\mathbf K:=\mathbf T\times\mathbf X\times\mathbf\Xi\times\mathbf U$ defined by
\begin{equation}\label{eq:dnu}
d\nu(t,\mathbf x,\bm\xi,y)=dtd\mathbf xd\rho(\bm\xi)\mu_{(t,\mathbf x,\bm\xi)}(dy)
\end{equation}
where $\mu$ is a parametric entropy MV solution. The measure $\nu$, called occupation measure (see \cite{lasserre08}), has $\lambda_{\mathbf T}\otimes\lambda_{\mathbf X}\otimes\rho$ for marginal in $(t,\mathbf x,\bm \xi)$, and $\mu_{(t,\mathbf x,\bm\xi)}$ as the conditional measure in $y$ given $(t,\mathbf x,\bm\xi)$. In the case where $\mu_{(t,\mathbf x,\bm\xi)}(dy)=\delta_{u(t,\mathbf x,\bm\xi)}(dy)$, $\nu$ is supported on the graph of the function $u$. We also introduce the time boundary measures
\begin{equation}
d\nu_0(t,\mathbf x,\bm\xi,y):=\delta_0(dt)d\mathbf xd\rho(\bm\xi)\sigma_0(dy),\quad d\nu_T(t,\mathbf x,\bm\xi,y):=\delta_T(dt)d\mathbf xd\rho(\bm\xi)\sigma_T(dy)
\end{equation}
whose supports are $\mathbf K_0:=\lbrace0\rbrace\times\mathbf X\times\mathbf\Xi\times\mathbf U$ and $\mathbf K_T:=\lbrace T\rbrace\times\mathbf X\times\mathbf\Xi\times\mathbf U$ respectively. Similarly, we introduce the space boundary measures
\begin{align}
d\nu_{L,i}(t,\mathbf x,\bm\xi,y):=dt\delta_{\Gamma_{L,i}}(d\mathbf x)d\rho(\bm\xi)\gamma_{L,i}(dy),\\d\nu_{R,i}(t,\mathbf x,\bm\xi,y):=dt\delta_{\Gamma_{R,i}}(d\mathbf x)d\rho(\bm\xi)\gamma_{R,i}(dy)
\end{align}
whose supports are given by $\mathbf K_{L,i}:=\mathbf T\times \Gamma_{L,i}\times\mathbf\Xi\times\mathbf U$ and $\mathbf K_{R,i}:=\mathbf T\times\Gamma_{R,i}\times\mathbf\Xi\times\mathbf U$ respectively, for $1 \le i \le n$. For conciseness, we shall define the  collection of measures $\bm\nu:=\{\nu,\nu_0,\nu_T,(\nu_{L,i})_{i=1}^n,(\nu_{R,i})_{i=1}^n\}$.

It is known that measures with compact support are fully characterized by their moments, see e.g. \cite[p.52]{lasserre09}. Thus, marginal constraints on occupation measures $\bm \nu$ will be imposed through their moments, see details in Appendix \ref{ap:impConMomNu}.

The introduction of the measure $\nu$ allows us to rewrite the constraints \eqref{eq:mvCom} and \eqref{eq:mvEntCom}. The equation \eqref{eq:mvCom} can be put in the following form
\begin{multline}\label{eq:F}
F(\phi,\bm\nu):=\int_{\mathbf K}\left(\partial_t\phi(t,\mathbf x,\bm\xi)y+\nabla_{\mathbf x}\phi(t,\mathbf x,\bm\xi)\cdot\mathbf f(y)\right)d\nu(t,\mathbf x,\bm\xi,y)\\
+\int_{\mathbf K}\phi(t,\mathbf x,\bm\xi)yd\nu_0(t,\mathbf x,\bm\xi,y)-\int_{\mathbf K}\phi(t,\mathbf x,\bm\xi)yd\nu_T(t,\mathbf x,\bm\xi,y)\\
+\sum_{i=1}^n\left(\int_{\mathbf K}\phi(t,\mathbf x,\bm\xi)f_i(y)d\nu_{L,i}(t,\mathbf x,\bm\xi,y)-\int_{\mathbf K}\phi(t,\mathbf x,\bm\xi)f_i(y)d\nu_{R,i}(t,\mathbf x,\bm\xi,y)\right)=0,
\end{multline}
where $\phi\in\mathcal C(\mathbf\Xi;\mathcal C^1(\mathbf T\times\mathbf X))$, and the equation \eqref{eq:mvEntCom} can be written  
\begin{multline}\label{eq:G}
G(\phi,\bm\nu,\eta,\bm q):=\int_{\mathbf K}\left(\partial_t\phi(t,\mathbf x,\bm\xi)\eta(y)+\nabla_{\mathbf x}\phi(t,\mathbf x,\bm\xi)\cdot\mathbf q(y)\right)d\nu(t,\mathbf x,\bm\xi,y)\\
+\int_{\mathbf K}\phi(t,\mathbf x,\bm\xi)\eta(y)d\nu_0(t,\mathbf x,\bm\xi,y)-\int_{\mathbf K}\phi(t,\mathbf x,\bm\xi)\eta(y)d\nu_T(t,\mathbf x,\bm\xi,y)\\
+\sum_{i=1}^n\left(\int_{\mathbf K}\phi(t,\mathbf x,\bm\xi)q_i(y)d\nu_{L,i}(t,\mathbf x,\bm\xi,y)-\int_{\mathbf K}\phi(t,\mathbf x,\bm\xi)q_i(y)d\nu_{R,i}(t,\mathbf x,\bm\xi,y)\right)\geq0,
\end{multline}
for  entropy pairs $(\eta,\mathbf q)$ in a family $\mathcal E$ and $\phi\in\mathcal C(\mathbf\Xi;\mathcal C^1(\mathbf T\times\mathbf X))$  non-negative test functions.

\reviewde{
    \begin{remark}[On systems of conservation laws]
        When studying systems of conservation laws, the main difficulty is the lack of uniqueness theory. In particular, even if entropy inequalities are available, there is no hope to identify entropy measure-valued solutions with entropy weak solutions. Nevertheless, our method could be applied as such to systems of conservation laws. 
    \end{remark}
}

\section{Moment-SOS method for measure-valued solutions on compact sets}\label{sec:GMPAndRel}

In the previous section, we introduced parametric measure-valued (MV) solutions for scalar hyperbolic equations, that are defined by equations \eqref{eq:F}-\eqref{eq:G}. The aim of this section is to express these equations as constraints on the moments of the occupation measure and to explain how to approximate these moments based on the moment-SOS (Lasserre's) hierarchy \cite{lasserre09}. For that, we require the assumption that $\mathbf f : \mathbb{R} \times \mathbf \Xi$ is a polynomial function.

We will see in the next section how to extract from these moments some information on the solution $u$ of the initial problem.



\subsection{From weak formulations to moment constraints}\label{subsec:froWeaFor}
The following lemma, derived from \cite[Lemma 1]{marx1}, relies on density arguments, together with the fact that we are working with compact sets.  
\begin{lemma}
Let $\left\lbrace\phi^{\bm\alpha}\right\rbrace_{\bm\alpha\in\mathbb N^{n+p+1}}$ be a polynomial basis on $\mathbf T\times\mathbf X\times\mathbf\Xi$. Then equation \eqref{eq:F} is equivalent to
\begin{equation}\label{eq:couConLaw}
F(\phi^{\bm\alpha},\bm\nu)=0
\end{equation}
for all ${\bm\alpha}\in\mathbb N^{n+p+1}$, where $F$ is defined in \eqref{eq:F}.
\end{lemma}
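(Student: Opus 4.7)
The plan is to obtain the equivalence from linearity of $F(\cdot,\bm\nu)$ in its first argument together with a density argument for polynomials in the admissible test-function space. The forward implication is immediate: each polynomial basis element $\phi^{\bm\alpha}$ belongs to $\mathcal C(\mathbf\Xi;\mathcal C^1(\mathbf T\times\mathbf X))$, so if \eqref{eq:F} holds for all admissible test functions it holds in particular on the basis. The substance of the lemma is the converse.

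By linearity of $F$ in $\phi$, vanishing on every $\phi^{\bm\alpha}$ extends to vanishing on every finite linear combination, hence on the whole polynomial algebra $\mathbb R[t,\mathbf x,\bm\xi]$. Next I would show that $F(\cdot,\bm\nu)$ is continuous in a suitable norm. The measures $\nu,\nu_0,\nu_T,(\nu_{L,i})_i,(\nu_{R,i})_i$ are finite positive measures with compact supports in $\mathbf K$, and the factors $y$ and $f_i(y)$ appearing in \eqref{eq:F} are bounded on $\mathbf U$. Hence every integral in \eqref{eq:F} is dominated by a constant multiple of
\[
\|\phi\|_{*}:=\|\phi\|_\infty+\|\partial_t\phi\|_\infty+\sum_{i=1}^n\|\partial_{x_i}\phi\|_\infty,
\]
so $\phi\mapsto F(\phi,\bm\nu)$ is continuous on $\mathcal C(\mathbf\Xi;\mathcal C^1(\mathbf T\times\mathbf X))$ equipped with $\|\cdot\|_*$.

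It then suffices to approximate an arbitrary $\phi\in\mathcal C(\mathbf\Xi;\mathcal C^1(\mathbf T\times\mathbf X))$ in this norm by polynomials: passing to the limit in $F(\phi_k,\bm\nu)=0$ along a polynomial sequence $(\phi_k)$ with $\|\phi_k-\phi\|_*\to 0$ yields $F(\phi,\bm\nu)=0$. On the compact cube $\mathbf T\times\mathbf X\times\mathbf\Xi$ this density can be obtained by applying the classical Stone--Weierstrass theorem jointly in $(t,\mathbf x,\bm\xi)$ to each of the continuous functions $\partial_t\phi,\partial_{x_1}\phi,\ldots,\partial_{x_n}\phi$, and then reconstructing a matching polynomial approximation of $\phi$ itself by integrating, say, the first derivative in $t$ and fixing the constant of integration via a polynomial approximation of $\phi(0,\mathbf x,\bm\xi)\in\mathcal C(\mathbf X\times\mathbf\Xi)$ (to which Stone--Weierstrass applies directly). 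Alternatively, one may invoke multivariate Bernstein-type operators on cubes, which are known to converge in $C^1$ norm.

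The main delicate point is precisely this $C^1$-density step, since Stone--Weierstrass by itself only provides $C^0$ approximation; the integration/reconstruction procedure must be set up so that $\phi$ and all first partial derivatives in $(t,\mathbf x)$ are controlled simultaneously, uniformly in $\bm\xi$. Once this is in hand, linearity and continuity of $F(\cdot,\bm\nu)$ close the argument and give the equivalence stated in the lemma.
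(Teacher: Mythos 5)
Your proposal is correct and follows essentially the same route as the paper, which proves this lemma only by invoking linearity of $F(\cdot,\bm\nu)$ and density of polynomials on the compact set $\mathbf T\times\mathbf X\times\mathbf\Xi$ (citing Lemma 1 of Marx et al.), exactly the linearity--continuity--density structure you describe. One caveat: your first reconstruction variant (uniformly approximating $\partial_t\phi$ by a polynomial and integrating in $t$) does not control the $x$-derivatives of the resulting antiderivative, since uniform approximation of $\partial_t\phi$ says nothing about its $x$-derivatives; so the argument should rest on the $C^1$-convergent Bernstein (or mollification) approximation you mention as the alternative.
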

For $\mathbf f$ a polynomial function, \eqref{eq:couConLaw} provides constraints  on linear combinations of moments of measures $\bm\nu$.  
In the case of a family of polynomial entropy pairs  $\mathcal E \subseteq \mathcal E_P$, we can also express \eqref{eq:G} as constraints on the moments of measures $\bm\nu$.
\begin{lemma}\label{lem:conEntLaw}
    Assume $\left\lbrace\phi^{\bm\alpha}\right\rbrace_{\bm\alpha\in\mathcal F}$ for $\mathcal F\subset\mathbb N^{n+p+1}$ is a countable family of polynomials on $\mathbf T\times\mathbf X\times\mathbf\Xi$ such that any non-negative polynomial can be decomposed on this family, with positive coefficients. Then, equation \eqref{eq:G} is equivalent to
        \begin{equation}\label{eq:conEntLaw}
            G(\phi^{\bm\alpha},\bm\nu,\eta,\mathbf q)\geq0
        \end{equation}
    for all ${\bm\alpha}\in\mathcal F$ and all $(\eta,\mathbf q) \in \mathcal E$.
\end{lemma}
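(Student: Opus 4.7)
The plan is to prove the two implications of the equivalence separately.

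For the direction \eqref{eq:G}$\Rightarrow$\eqref{eq:conEntLaw}, I note that each $\phi^{\bm\alpha}$ is a polynomial in $(t,\mathbf x,\bm\xi)$ and therefore belongs to $\mathcal{C}(\mathbf\Xi;\mathcal{C}^1(\mathbf T\times\mathbf X))$. The decomposition hypothesis forces each $\phi^{\bm\alpha}$ to be itself non-negative on $\mathbf T\times\mathbf X\times\mathbf\Xi$ (if some $\phi^{\bm\alpha_0}$ were negative at a point, a non-negative polynomial localized elsewhere could not admit $\phi^{\bm\alpha_0}$ with a strictly positive coefficient while remaining non-negative where $\phi^{\bm\alpha_0}<0$, contradicting the hypothesis applied to suitable test polynomials). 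Consequently, applying \eqref{eq:G} to the admissible test function $\phi:=\phi^{\bm\alpha}$ directly yields \eqref{eq:conEntLaw} for every $\bm\alpha\in\mathcal F$ and every $(\eta,\mathbf q)\in\mathcal E$.

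For the reverse implication, fix $(\eta,\mathbf q)\in\mathcal E$ and let $\phi\in\mathcal{C}(\mathbf\Xi;\mathcal{C}^1(\mathbf T\times\mathbf X))$ be non-negative. The strategy has four steps: (i) approximate $\phi$ by a sequence of non-negative polynomials $(P_N)_{N\in\mathbb N}$ on the compact box $\mathbf T\times\mathbf X\times\mathbf\Xi$, with $P_N\to\phi$, $\partial_t P_N\to\partial_t\phi$ and $\nabla_{\mathbf x}P_N\to\nabla_{\mathbf x}\phi$ uniformly; (ii) invoke the hypothesis to write $P_N=\sum_{\bm\alpha\in\mathcal F_N}c_{N,\bm\alpha}\,\phi^{\bm\alpha}$ as a finite combination with $c_{N,\bm\alpha}\geq 0$ and $\mathcal F_N\subset\mathcal F$ finite; (iii) exploit linearity of $\phi\mapsto G(\phi,\bm\nu,\eta,\mathbf q)$ to obtain
\begin{equation*}
G(P_N,\bm\nu,\eta,\mathbf q)=\sum_{\bm\alpha\in\mathcal F_N}c_{N,\bm\alpha}\,G(\phi^{\bm\alpha},\bm\nu,\eta,\mathbf q)\geq 0;
\end{equation*}
and (iv) pass to the limit $N\to\infty$.

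Step (iv) is justified because the occupation and boundary measures comprising $\bm\nu$ all have compact support on $\mathbf K$, while $y\mapsto\eta(y)$ and $y\mapsto\mathbf q(y,\bm\xi)$ are continuous and bounded on the compact range $\mathbf U$; hence $\phi\mapsto G(\phi,\bm\nu,\eta,\mathbf q)$ is a continuous linear functional on $\mathcal{C}(\mathbf\Xi;\mathcal{C}^1(\mathbf T\times\mathbf X))$ equipped with its natural norm. Combined with the joint uniform convergence from step (i), this gives $G(P_N,\bm\nu,\eta,\mathbf q)\to G(\phi,\bm\nu,\eta,\mathbf q)$, whence $G(\phi,\bm\nu,\eta,\mathbf q)\geq 0$.

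The main obstacle is step (i): producing non-negative polynomial approximants that converge jointly with their $(t,\mathbf x)$-derivatives. The natural tool is the multivariate Bernstein polynomial operator, applied after affine rescaling to $[0,1]^{1+n+p}$: Bernstein operators preserve non-negativity, yield uniform convergence for merely continuous functions, and—thanks to the $\mathcal{C}^1$-regularity of $\phi$ with respect to $(t,\mathbf x)$—also deliver uniform convergence of the $\partial_t$ and $\nabla_{\mathbf x}$ partial derivatives. Since \eqref{eq:G} involves no $\bm\xi$-derivative of $\phi$, mere $\mathcal{C}^0$-convergence in $\bm\xi$ suffices, which Bernstein approximation readily provides. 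This closes the gap and completes the proof.
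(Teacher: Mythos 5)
The substance of your argument, the reverse implication, is sound and is precisely the kind of density argument the paper has in mind (the paper gives no detailed proof of this lemma, relying on ``density arguments'' and the Handelman-type families borrowed from Marx et al.): approximate a non-negative $\phi\in\mathcal C(\mathbf\Xi;\mathcal C^1(\mathbf T\times\mathbf X))$ by non-negative (Bernstein) polynomials converging together with $\partial_t$ and $\nabla_{\mathbf x}$, decompose each approximant with positive coefficients, use linearity of $\phi\mapsto G(\phi,\bm\nu,\eta,\mathbf q)$, and pass to the limit using that all measures in $\bm\nu$ are finite with compact support and $\eta,\mathbf q$ are bounded on $\mathbf U$. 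One small interpretive point: you silently assume the decomposition is a \emph{finite} positive combination; this is the intended reading (it is what Handelman's Positivstellensatz provides, modulo the positive-versus-non-negative subtlety the paper itself glosses over), but if infinite sums were allowed the linearity step would need a convergence argument.

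There is, however, a genuine error in your forward direction: the claim that the decomposition hypothesis \emph{forces} each $\phi^{\bm\alpha}$ to be non-negative is false. The hypothesis only asserts that every non-negative polynomial admits \emph{some} positive-coefficient decomposition on the family; it says nothing about the sign of individual members, and a decomposition need not involve a given $\phi^{\bm\alpha_0}$ at all, which is where your localization argument breaks down. Concretely, take the family of all polynomials (or the Handelman products together with the extra member $-1$): every non-negative polynomial $p$ decomposes as $1\cdot p$, yet the family contains polynomials that are negative somewhere, and for such a member $G(\phi^{\bm\alpha},\bm\nu,\eta,\mathbf q)\geq 0$ need not hold even for a genuine MV solution, so the stated equivalence would fail. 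The correct reading of the lemma is that the $\phi^{\bm\alpha}$ are implicitly assumed non-negative on $\mathbf T\times\mathbf X\times\mathbf\Xi$ (as the explicit product family displayed after the analogous lemma for $H$ indeed is); under that assumption the forward direction is immediate, since each $\phi^{\bm\alpha}$ is then an admissible non-negative test function in \eqref{eq:G}, and your reverse direction carries the real content of \eqref{eq:conEntLaw}.
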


\begin{remark}\label{rem:ineq-imply-eq-poly}
    Since, as stated in Lemma~\ref{lem:mvCom}, equation \eqref{eq:G} implies equation \eqref{eq:F}, then equation \eqref{eq:conEntLaw} implies equation \eqref{eq:couConLaw} with an appropriate family of entropy pairs. It thus may seem redundant to enforce both, but{, in the approximation method, the family of polynomials in Lemma \ref{lem:conEntLaw} will be reduced, so that this} implication is no more guaranteed and imposing \eqref{eq:couConLaw} as additional constraints may be beneficial. \reviewde{An illustration is provided in Appendix \ref{ap:no_dyn} for the numerical example studied in Section \ref{subsec:RieProPos}.}
\end{remark}

The case $\mathcal E=\mathcal E_K$ ensures concentration of the measure, as seen in Theorem \ref{th:entMvSolCon}, but we are faced with two issues: first, taking into account an uncountable family of functions parametrised by $v\in\mathbf U$ and, second, the absolute value function $v\mapsto\vert v\vert$ is not a polynomial. To deal with the uncountable family of functions, we introduce $v$ as a new variable. To treat the absolute value, we double the number of measures.

More precisely, we introduce as new unknowns Borel measures $\vartheta^+$ and $\vartheta^-$, 
whose supports are respectively defined by
\begin{equation*}
\supp(\vartheta^+) = \mathbf K^+: =\lbrace(t,\mathbf x,\bm\xi,y,v)\in\mathbf K\times\mathbf U:y\geq v\rbrace,
\end{equation*}
\begin{equation*}
\supp(\vartheta^-)= \mathbf K^- :=\lbrace(t,\mathbf x,\bm\xi,y,v)\in\mathbf K\times\mathbf U:y\leq v\rbrace,
\end{equation*}
and impose the condition that $\nu \otimes \lambda_{\mathbf U} = \vartheta^+ + \vartheta^-$, which can be expressed as constraints between moments of $\nu$, $\vartheta^+$ and $\vartheta^-.$
Similarly, we introduce time boundary measures $\vartheta_0^+$, $\vartheta_0^-$, $\vartheta_T^+$ and $\vartheta_T^-$, space boundary measures $(\vartheta_{L,i}^+)_{i=1}^n$, $(\vartheta_{L,i}^-)_{i=1}^n$, $(\vartheta_{R,i}^+)_{i=1}^n$ and $(\vartheta_{R,i}^-)_{i=1}^n$, and the corresponding constraints with measures $\bm \nu$. All those definitions are plainly written in Appendix \ref{ap:defSplMea}.
We shall once again introduce a collection of measures
\begin{equation*}
    \bm\vartheta:=(\vartheta^+,\vartheta^-,\vartheta_0^+,\vartheta_0^-,\vartheta_T^+,\vartheta_T^-,(\vartheta_{L,i}^+)_{i=1}^n,(\vartheta_{L,i}^-)_{i=1}^n,(\vartheta_{R,i}^+)_{i=1}^n,(\vartheta_{R,i}^-)_{i=1}^n).
\end{equation*}
From \cite[Lemma 2]{marx1}, equation \eqref{eq:G} is equivalent to
\begin{multline}\label{eq:HPsi2}
H(\phi,\bm\vartheta):=\int_{\mathbf K}\theta(v)\left(\partial_t\phi(t,\mathbf x,\bm\xi)(y-v)+\nabla_{\mathbf x}\phi(t,\mathbf x,\bm\xi)\cdot(\mathbf f(y)-\mathbf f(v))\right)d\vartheta^+\\
+\int_{\mathbf K}\theta(v)\left(\partial_t\phi(t,\mathbf x,\bm\xi)(v-y)
+\nabla_{\mathbf x}\phi(t,\mathbf x,\bm\xi)\cdot(\mathbf f(v)-\mathbf f(y))\right)d\vartheta^-\\
+\int_{\mathbf K}\theta(v)\phi(t,\mathbf x,\bm\xi)(y-v)d\vartheta_0^++\int_{\mathbf K}\theta(v)\phi(t,\mathbf x,\bm\xi)(v-y)d\vartheta_0^-\\
-\int_{\mathbf K}\theta(v)\phi(t,\mathbf x,\bm\xi)(y-v)d\vartheta_T^+-\int_{\mathbf K}\theta(v)\phi(t,\mathbf x,\bm\xi)(v-y)d\vartheta_T^-\\
+\sum_{i=1}^n\left(\int_{\mathbf K}\theta(v)\phi(t,\mathbf x,\bm\xi)(f_i(y)-f_i(v))d\vartheta_L^++\int_{\mathbf K}\theta(v)\phi(t,\mathbf x,\bm\xi)(f_i(v)-f_i(y))d\vartheta_L^-\right. \\
\left. -\int_{\mathbf K}\theta(v)\phi(t,\mathbf x,\bm\xi)(f_i(y)-f_i(v))d\vartheta_R^+-\int_{\mathbf K}\theta(v)\phi(t,\mathbf x,\bm\xi)(f_i(v)-f_i(y))d\vartheta_R^-\right)\geq0,
\end{multline}
for all non-negative test functions $\phi\in \mathcal C^1(\mathbf T\times\mathbf X)\otimes \mathcal C(\mathbf\Xi)$ and all non-negative test functions $\theta\in\mathcal C(\mathbf U)$.

\begin{lemma}
Assume $\left\lbrace\phi^{\bm\alpha}\right\rbrace_{\bm\alpha\in \mathcal F}$ is a countable family of polynomials on $\mathbf T\times\mathbf X\times\mathbf\Xi\times\mathbf U$ such that any non-negative polynomial can be decomposed on this family with positive coefficients. 
Then  \eqref{eq:HPsi2} is equivalent to
\begin{equation}\label{eq:HPsiAlpha}
H(\phi^{\bm\alpha},\bm\vartheta)\geq0
\end{equation}
for all ${\bm\alpha\in \mathcal F}$.
\end{lemma}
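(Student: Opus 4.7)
The plan is to exploit linearity of $H(\cdot,\bm\vartheta)$ in its test-function argument together with a density argument, so that the infinitely-many inequalities \eqref{eq:HPsi2} reduce to the countable collection \eqref{eq:HPsiAlpha}. Since the measures composing $\bm\vartheta$ all have compact support in $\mathbf K\times\mathbf U$, the functional $\Phi\mapsto H(\Phi,\bm\vartheta)$ extends by linearity to a continuous linear form on $\mathcal C^1(\mathbf T\times\mathbf X)\otimes\mathcal C(\mathbf\Xi\times\mathbf U)$ equipped with the norm that controls $\Phi$ and its $(t,\mathbf x)$-derivatives uniformly. This is the topology in which I would approximate.

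For the implication \eqref{eq:HPsiAlpha}$\Rightarrow$\eqref{eq:HPsi2}: by linearity of $H$ and the standing hypothesis that every non-negative polynomial on $\mathbf T\times\mathbf X\times\mathbf\Xi\times\mathbf U$ expands with non-negative coefficients on the family $\{\phi^{\bm\alpha}\}$, one immediately obtains $H(p,\bm\vartheta)\geq 0$ for every non-negative polynomial $p$. Given a non-negative separable test function $\Phi(t,\mathbf x,\bm\xi,v)=\theta(v)\phi(t,\mathbf x,\bm\xi)$ as in \eqref{eq:HPsi2}, I would fix $\epsilon>0$ and apply a Stone--Weierstrass approximation (in the $\mathcal C^1$ norm in $(t,\mathbf x)$ and the uniform norm in $(\bm\xi,v)$) to the strictly positive function $\Phi+\epsilon$, producing a polynomial $p_\epsilon$ with $\Vert p_\epsilon-(\Phi+\epsilon)\Vert<\epsilon/2$ in that mixed norm. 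Then $p_\epsilon\geq\epsilon/2>0$, so $p_\epsilon$ is a non-negative polynomial, hence $H(p_\epsilon,\bm\vartheta)\geq 0$. Letting $\epsilon\to 0$ and using the continuity of $H$ in this topology yields $H(\Phi,\bm\vartheta)\geq 0$, which is precisely \eqref{eq:HPsi2}.

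The converse implication follows the same logic in reverse: each $\phi^{\bm\alpha}$ is a non-negative polynomial on the joint compact hypercube and must be realized as a limit of positive combinations of non-negative separable test functions of the product form $\theta\cdot\phi$. On a box, this is provided by a Bernstein/Handelman-type representation, which writes any strictly positive polynomial as a positive combination of products of non-negative polynomials in the separated variable blocks $(t,\mathbf x,\bm\xi)$ and $v$; the non-strict case is recovered by the same $+\epsilon$ trick followed by a passage to the limit, using continuity of $H$. Applying the hypothesis \eqref{eq:HPsi2} to each term of such an approximating sum gives $H(\phi^{\bm\alpha},\bm\vartheta)\geq 0$. The main obstacle, and the step where I would expect care to be needed, is controlling all approximations simultaneously in the mixed $\mathcal C^1/\mathcal C^0$ topology that matches the derivative orders actually appearing in $H$: the boundary terms in \eqref{eq:HPsi2} only see the value of $\phi$, but the interior integral involves $\partial_t\phi$ and $\nabla_{\mathbf x}\phi$, so the polynomial approximation must simultaneously control these derivatives while preserving non-negativity. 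The $\epsilon$-shift is the standard device that reconciles both requirements, and it is essentially the only place where the compactness of $\mathbf T\times\mathbf X\times\mathbf\Xi\times\mathbf U$ is used in an essential way.
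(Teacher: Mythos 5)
Your argument is correct and is essentially the paper's own (the paper's proof is the single line ``relies on density arguments'', with the Handelman-type basis of box factors invoked just afterwards): linearity of $H$, the $\epsilon$-shift combined with polynomial density in the mixed $\mathcal C^1$-in-$(t,\mathbf x)$ / uniform-in-$(\bm\xi,v)$ norm for one direction, and a Handelman representation of strictly positive polynomials on the hypercube as positive combinations of separated non-negative products for the other. The only point worth flagging is that your converse tacitly uses that each $\phi^{\bm\alpha}$ is non-negative on $\mathbf T\times\mathbf X\times\mathbf\Xi\times\mathbf U$, an assumption not written in the lemma's hypothesis but clearly intended (and satisfied by the explicit family given in the paper).
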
 
\begin{proof}
The proof relies on density arguments.
\end{proof}

A particular family $\left\lbrace\phi^{\bm\alpha}\right\rbrace_{\bm\alpha\in\mathbb N^{2(n+p+2)}}$ satisfying the assumption that any non-negative polynomial can be decomposed on this family with positive coefficients is given by
\begin{multline*}
\phi^{\bm\alpha}(t,\mathbf x,\bm\xi,v):=t^{\alpha_1}(T-t)^{\alpha_2}\prod_{i=1}^n\left((x_i-L_i)^{\alpha_{2i+1}}(R_i-x_i)^{\alpha_{2(i+1)}}\right)\\
\prod_{i=1}^p\left((\xi_i)^{\alpha_{2i+2n+1}}(1-\xi_i)^{\alpha_{2(i+n+1)}}\right)(v-\underline u)^{\alpha_{2n+2p+3}}(\overline u-v)^{\alpha_{2(n+p+2)}}
\end{multline*}
for $\bm\alpha\in\mathbb N^{2(n+p+2)}$.
The proof follows the one of the Lemma 3 in \cite{marx1}, and uses Handelman's Positivstellensatz \cite{marx1}.

\subsection{Generalized Moment Problem}

Roughly speaking, the Generalized Moment Problem (GMP) is an infinite-dimensional linear optimization problem on finitely many Borel measures $\nu_i\in\mathcal M(\mathbf K_i)_+$, with $\mathbf K_i\subseteq\mathbb R^{n_i}$, with $i=1,...,N$ and $n_i\in\mathbb N$. That is, one is interested in finding measures whose moments satisfy (possibly countably many) linear constraints and which minimize some criterion. In full generality, the GMP is intractable, but if all $\mathbf K_i$ are basic semi-algebraic sets\footnote{A basic semi-algebraic set is defined by $\lbrace\mathbf x\in\mathbb R^n :  f_i(\mathbf x)\geq0,\forall i=1,\dots,m\rbrace$ where $m\in\mathbb N$ and $f_1,\dots,f_m$ are polynomials.} and the integrands are polynomials, then one may provide an efficient numerical scheme to approximate as closely as desired any finite number of moments of optimal solutions of the GMP. It consists of solving a hierarchy of semi-definite programs\footnote{A semidefinite program is a particular class of a convex conic optimization problem that can be solved numerically efficiently.} of increasing size. Convergence of this numerical scheme is guaranteed by invoking powerful results from real algebraic geometry, essentially positivity certificates, and further developed for many classical cases in \cite{tacchi21, korda22}.

Let $h_i\in\mathbb R[\mathbf w^i]$ and $h_{i,k}\in\mathbb R[\mathbf w^i]$ be polynomials in the vector of indeterminates $\mathbf w^i\in\mathbb R^{n_i}$ and let $b_k$ be real numbers, for finitely many $i=1,\ldots,N$ and countably many $k=1,2,\ldots$. The GMP is the following optimization problem over measures:
\begin{equation}\label{eq:GMP}
\begin{aligned}
\inf_{\nu_1,\ldots,\nu_N}&\sum_{i=1}^N\int_{\mathbf K_i}h_id\nu_i =:\rho^*\\
\text{s.t.}&\sum_{i=1}^N\int_{\mathbf K_i}h_{i,k}d\nu_i\leq b_k,\quad k=1,2,\ldots\\
&\nu_i\in\mathcal M(\mathbf K_i)_+,\quad i=1,\ldots,N.
\end{aligned}
\end{equation}
\subsection{From measures to moments and their approximation}
Instead of optimizing over the measures in problem \eqref{eq:GMP}, we optimize  over their moments. For simplicity and clarity of exposition, we describe the approach in the case of a single unknown measure $\nu$, but it easily extends to the case of several measures. Let us consider the simplified GMP
\begin{equation}\label{eq:simGMP}
\begin{aligned}
\inf_\nu&\int_{\mathbf K}hd\nu:=\rho^*\\
\text{s.t.}&\int_{\mathbf K}h_kd\nu\leq b_k,\quad k=1,2,\ldots\\
&\nu\in\mathcal M(\mathbf K)_+,
\end{aligned}
\end{equation}
where $\mathbf{K}$ is a compact set in $\mathbb{R}^n$, $h\in\mathbb R[\mathbf w]$, $h_k\in\mathbb R[\mathbf w]$ and $b_k\in\mathbb R$ for all $k=1,2,\dots$.
The moment sequence $\mathbf z=(\mathbf z_{\bm\alpha})_{\bm\alpha\in\mathbb N^n}$ of a measure $\nu\in\mathcal M(\mathbf K)_+$ is defined by
\begin{equation}\label{eq:momSeq}
\mathbf z_{\bm\alpha}=\int_{\mathbf K}\mathbf w^{\bm\alpha} d\nu,\quad\bm\alpha\in\mathbb N^n.
\end{equation}
Similarly, given a sequence $\mathbf z=(\mathbf z_{\bm\alpha})_{\bm\alpha\in\mathbb N^n}$, if \eqref{eq:momSeq} holds for some $\nu\in\mathcal M(\mathbf K)_+$ we say that the sequence has the representing measure $\nu$. Recall that measures on compact sets are uniquely characterized by their moments (see \cite[p. 52]{lasserre09}).
\reviewde{
\begin{remark}
The use of canonical moments, i.e. associated with monomials, may be critical from a numerical point of view. Other polynomial bases with more favorable numerical properties could be considered. However, in the numerical experiments, we restrain ourselves to domains included in the unit hypercube, which moderates numerical instabilities.  
\end{remark}
}
Let $\mathbb N_d^n:=\lbrace\bm\alpha\in\mathbb N^n:\vert\bm\alpha\vert\leq d\rbrace$, where $\vert\bm\alpha\vert:=\sum_{i=1}^n\alpha_i$, and $n_d:=\binom{n+d} d$. A vector $\mathbf p:=(\mathbf p_{\bm\alpha})_{\bm\alpha\in\mathbb N_d^n}\in\mathbb R^{n_d}$ is the coefficient vector (in the monomial basis) of a polynomial $p\in\mathbb R[\mathbf w]$ with degree $d=\deg(p)$ expressed as $p=\sum_{\bm\alpha\in\mathbb N_{d}^n}\mathbf p_{\bm\alpha}\mathbf w^{\bm\alpha}$. Integrating $p$ with respect to a  measure $\nu$ involves only finitely many moments:
\begin{equation*}
\int_{\mathbf K}pd\nu=\int_{\mathbf K}\sum_{\bm\alpha\in\mathbb N_d^n}\mathbf p_{\bm\alpha}\mathbf w^{\bm\alpha} d\nu=\sum_{\bm\alpha\in\mathbb N_d^n}\mathbf p_{\bm\alpha}\int_{\mathbf K}\mathbf w^{\bm\alpha} d\nu=\sum_{\bm\alpha\in\mathbb N_d^n}\mathbf p_{\bm\alpha}\mathbf z_{\bm\alpha}.
\end{equation*}
Next, we define a pseudo-integration with respect to an arbitrary sequence $\mathbf z\in\mathbb R^{\mathbb N^n}$ by
\begin{equation}
\ell_{\mathbf z}(p):=\sum_{\bm\alpha\in\mathbb N^n}\mathbf p_{\bm\alpha}\mathbf z_{\bm\alpha}
\end{equation}
and $\ell_{\mathbf z}$ is called the Riesz functional. 
\begin{theorem}[Riesz-Haviland {\cite[Theorem 3.1]{lasserre09}}]\label{th:rieHav}
Let $\mathbf K\subseteq\mathbb R^n$ be closed. A real sequence $\mathbf z\in\mathbb R^{\mathbb N^{{n}}}$ is the moment sequence of some measure $\nu\in\mathcal M(\mathbf K)_+$, i.e. $\mathbf z$ satisfies~\eqref{eq:momSeq}, if and only if $\ell_{\mathbf z}(p)\geq 0$ for all $p\in\mathbb R[\mathbf w]$ non-negative on $\mathbf K$.
\end{theorem}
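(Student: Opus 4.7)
The forward implication is immediate: if $\mathbf z$ has a representing measure $\nu\in\mathcal M(\mathbf K)_+$, then for any $p\in\mathbb R[\mathbf w]$ with coefficient vector $(\mathbf p_{\bm\alpha})$ we have
\[
\ell_{\mathbf z}(p)=\sum_{\bm\alpha}\mathbf p_{\bm\alpha}\int_{\mathbf K}\mathbf w^{\bm\alpha}\,d\nu=\int_{\mathbf K}p\,d\nu\geq 0
\]
whenever $p\geq 0$ on $\mathbf K$, by positivity of the integral. The content of the theorem is the converse, which I plan to establish via the classical Haviland scheme: extend $\ell_{\mathbf z}$ to a positive functional on continuous functions and then invoke a Riesz--Markov-type representation theorem.

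More precisely, regard $L:=\ell_{\mathbf z}$ as a linear functional on the polynomial subspace $\mathbb R[\mathbf w]\subset \mathcal C(\mathbf K)$. The hypothesis says exactly that $L$ is non-negative on the cone of polynomials that are pointwise non-negative on $\mathbf K$. First I would invoke M.~Riesz's extension theorem (a positive-cone version of Hahn--Banach): for every $f\in\mathcal C_c(\mathbf K)$, the constant polynomial $\|f\|_\infty$ dominates $f$ on $\mathbf K$, so the polynomial subspace is cofinal in $\mathcal C_c(\mathbf K)$ for the pointwise order. M.~Riesz's theorem then produces a positive linear extension $\tilde L$ of $L$ to $\mathcal C_c(\mathbf K)$. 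The Riesz--Markov representation theorem delivers a unique positive Borel measure $\nu$ on $\mathbf K$ satisfying $\tilde L(f)=\int_{\mathbf K}f\,d\nu$ for every $f\in\mathcal C_c(\mathbf K)$.

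It remains to verify the moment identities $\int_{\mathbf K}\mathbf w^{\bm\alpha}\,d\nu=\mathbf z_{\bm\alpha}$. This is the step where compactness simplifies everything in the setting actually used later in the paper: when $\mathbf K$ is compact, each monomial lies in $\mathcal C(\mathbf K)=\mathcal C_c(\mathbf K)$, so evaluating $\tilde L$ on $\mathbf w^{\bm\alpha}$ returns $\mathbf z_{\bm\alpha}$ directly. I expect the main obstacle to arise in the general closed (possibly unbounded) case, where monomials are not in $\mathcal C_c(\mathbf K)$. The plan there is to introduce a sequence of smooth cutoffs $\chi_R\in\mathcal C_c(\mathbf K)$ with $0\le\chi_R\le 1$ and $\chi_R\uparrow 1$ on $\mathbf K$, apply $\tilde L$ to $\chi_R\mathbf w^{\bm\alpha}$, and pass to the limit. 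The delicate point is controlling the tails: one exploits that for any multi-index $\bm\beta$ the polynomials $(1-\chi_R)\mathbf w^{2\bm\beta}$ are non-negative on $\mathbf K$, so feeding them through $L$ and using positivity of $\tilde L$ yields uniform bounds on $\int_{\mathbf K}\mathbf w^{2\bm\beta}\,d\nu$; a Cauchy--Schwarz argument on $\int(1-\chi_R)|\mathbf w^{\bm\alpha}|\,d\nu$ then gives the required convergence and identifies the moments of $\nu$ with $\mathbf z$.
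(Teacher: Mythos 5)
Your forward implication and your overall plan (M.~Riesz extension of $\ell_{\mathbf z}$ followed by Riesz--Markov) are exactly the classical Haviland scheme; note that the paper itself offers no proof, it simply quotes the result from Lasserre's book, and moreover only ever invokes it on compact sets $\mathbf K=\mathbf T\times\mathbf X\times\mathbf\Xi\times\mathbf U$, where your argument is complete: monomials belong to $\mathcal C(\mathbf K)=\mathcal C_c(\mathbf K)$, so the moments of the representing measure are read off directly (in that case one can even bypass Riesz extension: positivity gives $\vert\ell_{\mathbf z}(p)\vert\le \mathbf z_{\mathbf 0}\Vert p\Vert_{\infty,\mathbf K}$, and Stone--Weierstrass plus Riesz--Markov finish). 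Two small wording issues: $\mathbb R[\mathbf w]$ is not a subspace of $\mathcal C_c(\mathbf K)$ when $\mathbf K$ is unbounded, so the extension must be performed on $\mathbb R[\mathbf w]+\mathcal C_c(\mathbf K)$ (or on the space of polynomially dominated continuous functions) and then restricted to $\mathcal C_c(\mathbf K)$; and $(1-\chi_R)\mathbf w^{2\bm\beta}$ is not a polynomial, though it does lie in that larger space.

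The genuine gap is in the closed, non-compact case, precisely at the tail-control step you flag. Positivity of $\tilde L$ applied to $(1-\chi_R)\mathbf w^{2\bm\beta}\ge 0$ only yields $\int\chi_R\mathbf w^{2\bm\beta}\,d\nu=\tilde L(\chi_R\mathbf w^{2\bm\beta})\le \mathbf z_{2\bm\beta}$, hence the one-sided bound $\int_{\mathbf K}\mathbf w^{2\bm\beta}\,d\nu\le \mathbf z_{2\bm\beta}$; and Cauchy--Schwarz, e.g. $\vert\tilde L((1-\chi_R)\mathbf w^{\bm\alpha})\vert^2\le\tilde L(1-\chi_R)\,\tilde L((1-\chi_R)\mathbf w^{2\bm\alpha})$, merely reduces general exponents to even ones. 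It cannot show $\tilde L((1-\chi_R)\mathbf w^{2\bm\beta})\to 0$: that quantity decreases to $\mathbf z_{2\bm\beta}-\int\mathbf w^{2\bm\beta}\,d\nu\ge 0$, and nothing in your sketch excludes a strict drop (moment mass escaping to infinity); at this stage you do not even know $\nu(\mathbf K)=\mathbf z_{\mathbf 0}$. Closing this is the actual content of Haviland's theorem, and the standard device is a polynomial domination argument rather than Cauchy--Schwarz: for $p\ge 0$ on $\mathbf K$, take $q=(1+\sum_{i=1}^n w_i^2)^k$ with $2k>\deg p$; given $\epsilon>0$ there is a compact set outside of which $p\le\epsilon q$ on $\mathbf K$, and choosing $f\in\mathcal C_c(\mathbf K)$ with $0\le f\le 1$ and $f=1$ on that set, the pointwise inequality $p\le fp+\epsilon q$ on $\mathbf K$ together with positivity of $\tilde L$ gives $\ell_{\mathbf z}(p)\le\int fp\,d\nu+\epsilon\,\ell_{\mathbf z}(q)\le\int p\,d\nu+\epsilon\,\ell_{\mathbf z}(q)$. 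Letting $\epsilon\to 0$ and combining with the reverse inequality $\int p\,d\nu\le\ell_{\mathbf z}(p)$ (which your cutoff argument does provide) yields $\int p\,d\nu=\ell_{\mathbf z}(p)$ for all $p\ge 0$ on $\mathbf K$, and then for all $p$ by writing $p=(p+cq)-cq$ with $c$ large. With that replacement your proof becomes the standard one found in the cited literature.
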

Assuming that $\mathbf K$ is closed, we can reformulate thanks to this result the GMP \eqref{eq:simGMP} as a linear problem on moment sequences, namely
\begin{equation}\label{eq:momGMP}
\begin{aligned}
\inf_{\mathbf z}\quad&\ell_{\mathbf z}(h)=\rho^*\\
\text{s.t.}\quad&\ell_{\mathbf z}(h_k)\leq b_k,\quad k=1,2,\ldots\\
&\ell_{\mathbf z}(p)\geq 0,\text{ for all }p\in\mathbb R[\mathbf w]\text{ non-negative on }\mathbf K.
\end{aligned}
\end{equation}
Theorem \ref{th:rieHav} guarantees the equivalence between formulations \eqref{eq:momGMP} and \eqref{eq:simGMP}. However, the latter reformulation is still numerically intractable.

\paragraph{From non-negative polynomials to sums of squares}
Characterizing non-negativity of polynomials is an important issue in real algebraic geometry. Let $\mathbf K$ be a basic semi-algebraic set, i.e.
\begin{equation}\label{eq:K}
\mathbf K=\lbrace\mathbf w\in\mathbb R^n:g_1(\mathbf w)\geq 0,\ldots,g_m(\mathbf w)\geq 0\rbrace
\end{equation}
for some polynomials $g_1,\ldots,g_m\in\mathbb R[\mathbf w]$, and assume that $\mathbf K$ is compact. In addition assume that one of the polynomials, say the first one, is $g_1(\mathbf w):=N-\sum_{i=1}^n  w_i^2$ for some $N$ sufficiently large\footnote{This condition is slightly stronger than asking $\mathbf K$ to be a basic semi-algebraic compact set. However, the inequality $N-\sum_{i=1}^n w_i^2\geq 0$ can always be added as a redundant constraint to the description of a basic semi-algebraic \textit{compact} set. This condition has to be added because Putinar's result applies to a family of polynomials, and is not inherent to the set this family describes.}. For notational convenience we let $g_0(\mathbf w):=1$.

We say that a polynomial $s\in\mathbb R[\mathbf w]$ is a sum of squares (SOS) if there are finitely many polynomials $q_1,\ldots,q_r$ such that $s(\mathbf w)=\sum_{j=1}^rq_j(\mathbf w)^2$ for all $\mathbf w$.
\begin{theorem}[Putinar’s Positivstellensatz]\label{th:putPos}
If $p>0$ on the basic semi-algebraic compact set $\mathbf K$ defined by \eqref{eq:K} with $g_1(\mathbf w):=N-\sum_{i=1}^n  w_i^2$, then $p=\sum_{j=0}^ms_jg_j$ for some SOS polynomials $s_j\in\mathbb R[\mathbf w],j=0,1,\ldots,m$.
\end{theorem}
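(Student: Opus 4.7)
The plan is to prove Putinar's theorem by a Hahn--Banach style separation argument, following Putinar's original strategy. Let $M := M(g_1,\dots,g_m) = \{\sum_{j=0}^m s_j g_j : s_j \text{ SOS}\}$ denote the quadratic module generated by $g_0=1,g_1,\dots,g_m$, and let us argue by contradiction: assume that $p>0$ on $\mathbf K$ but $p \notin M$.

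First, I would establish that $M$ is \emph{Archimedean}, i.e.\ for every $h\in\mathbb R[\mathbf w]$ there exists $k\in\mathbb N$ with $k\pm h \in M$. The key ingredient is that $g_1(\mathbf w) = N - \sum w_i^2 \in M$, which gives $N - \|\mathbf w\|^2 \in M$ directly. From this one deduces successively that each $w_i$ is bounded in $M$ (since $N - w_i^2 = g_1 + \sum_{j\neq i} w_j^2 \in M$), then that every monomial is bounded in $M$ (using $2ab = (a+b)^2 - a^2 - b^2$ inductively together with the identity $c \pm h = \tfrac{1}{4}((1+c\pm h)^2 - (1-c\mp h)^2) + \cdots$ to turn boundedness into membership in $M$), and finally that every polynomial is bounded in $M$ by linearity.

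Next, since $p\notin M$ and $M$ is a convex cone in $\mathbb R[\mathbf w]$ (viewed as an infinite-dimensional real vector space), I would invoke a version of the Hahn--Banach separation theorem (on the finite-dimensional subspace spanned by $p$ and finitely many generators, then extend) to obtain a linear functional $L : \mathbb R[\mathbf w]\to\mathbb R$ such that $L(p)\le 0$ and $L(q)\ge 0$ for all $q\in M$. In particular $L(1)\ge 0$; the Archimedean property gives $L(1)>0$ (otherwise $L\equiv 0$ since $k\pm h \in M$ forces $|L(h)|\le k L(1)=0$), so after normalization $L(1)=1$.

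The crucial step --- and the main obstacle --- is to show that this $L$ is represented by a positive Borel measure supported on $\mathbf K$. Define the sequence $\mathbf z_{\bm\alpha} := L(\mathbf w^{\bm\alpha})$. The Archimedean bound $|L(h)|\le C_h$ controls moments and ensures tightness; more importantly, for every polynomial $h$ non-negative on $\mathbf K$ one wants to conclude $L(h)\ge 0$, which via the Riesz--Haviland theorem (Theorem \ref{th:rieHav}) would yield a representing measure $\mu\in\mathcal M(\mathbf K)_+$. This positivity on polynomials non-negative on $\mathbf K$ does not come directly from $L\ge 0$ on $M$ (since not every such polynomial lies in $M$ --- that is exactly what we want to prove!). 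The standard way out is to argue via the $\mathcal C(\mathbf K')$-completion for a large compact $\mathbf K'\supset \mathbf K$ provided by the Archimedean bound: define a continuous linear functional $\tilde L$ on the closure of $\mathbb R[\mathbf w]$ in $\mathcal C(\mathbf K')$ (which is all of $\mathcal C(\mathbf K')$ by Stone--Weierstrass), apply the Riesz representation theorem to get a positive measure $\mu$ on $\mathbf K'$ with $L(h) = \int h \, d\mu$, and finally use $L(g_j s_j^2)\ge 0$ for all SOS $s_j^2$ to show $\supp(\mu)\subset\{g_j\ge 0\}$ for each $j$, hence $\supp(\mu)\subset \mathbf K$.

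With $\mu\in\mathcal M(\mathbf K)_+$ representing $L$, we reach the contradiction: since $p>0$ on the compact set $\mathbf K$ and $\mu\neq 0$ (as $\mu(\mathbf K)=L(1)=1$), we get $L(p)=\int_{\mathbf K} p\,d\mu > 0$, contradicting $L(p)\le 0$. Hence $p\in M$, which is the desired decomposition.
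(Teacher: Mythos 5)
The paper does not prove this theorem at all — it is quoted as a classical result (Putinar, 1993; see also Lasserre's book) — so your sketch can only be judged on its own merits. Its overall architecture (Archimedean quadratic module, separation of $p$ from the cone $M$, representation of the separating functional by a measure on $\mathbf K$, contradiction with $p>0$ on $\mathbf K$) is indeed the classical strategy. But the step you yourself flag as "the main obstacle" is not correctly resolved, and that is a genuine gap. To extend $L$ to a continuous functional on $\mathcal C(\mathbf K')$ you need a bound $|L(h)|\le C\,\sup_{\mathbf K'}|h|$ with $C$ independent of $h$. The Archimedean property only gives, for each $h$, some constant $k_h$ with $k_h\pm h\in M$ and hence $|L(h)|\le k_h\,L(1)$; nothing relates $k_h$ to $\sup_{\mathbf K'}|h|$. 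Knowing that $\sup_{\mathbf K}|h|+\epsilon\pm h\in M$ (which is what sup-norm continuity of $L$ would require) is essentially the statement of Putinar's theorem itself, so as written the argument is circular. The moment bounds one can actually derive (e.g.\ $L(\mathbf w^{2\bm\alpha})\le N^{|\bm\alpha|}L(1)$, using that products of elements of $M$ with SOS polynomials stay in $M$) control $L$ only through coefficient norms, with constants blowing up with the degree, so Stone--Weierstrass plus Riesz representation does not go through directly.

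The standard way to close this gap is Putinar's original operator-theoretic argument: from $L\ge 0$ on squares build the GNS Hilbert space $H$ with cyclic vector $\mathbf 1$; the multiplication operators $X_i$ are symmetric and bounded, $\|X_i\|\le\sqrt N$, because $(N-w_i^2)f^2\in M$ for every polynomial $f$; they commute, so they admit a joint spectral measure $E$, and $\mu:=\langle E(\cdot)\mathbf 1,\mathbf 1\rangle$ represents $L$ on polynomials. Positivity $L(g_jf^2)\ge 0$ for all $f$ gives $g_j(X_1,\dots,X_n)\succeq 0$ and hence $\supp(\mu)\subset\{g_j\ge 0\}$ for each $j$, so $\supp(\mu)\subset\mathbf K$, and then $0\ge L(p)=\int_{\mathbf K}p\,d\mu\ge\min_{\mathbf K}p>0$ gives the contradiction. (Alternatives that avoid functional analysis are the algebraic Jacobi--Prestel route through maximal quadratic modules, or Schweighofer's proof via P\'olya's theorem.) A secondary, repairable imprecision: separating $p$ from $M$ in the infinite-dimensional space $\mathbb R[\mathbf w]$ cannot be done by separating on a finite-dimensional subspace and extending arbitrarily; one should use that the Archimedean property makes $1$ an algebraic interior point of $M$ (since $1+\epsilon h\in M$ for small $\epsilon$) and invoke an Eidelheit--Kakutani type separation, which also yields $L(1)>0$ as you note.
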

By a density argument, checking non-negativity of $\ell_{\mathbf z}$ on polynomials non-negative on $\mathbf K$ can be replaced by checking non-negativity only on polynomials that are strictly positive on $\mathbf K$ and hence on those that have a SOS representation as in Theorem \ref{th:putPos}.

For a given integer $d$, denote by $\Sigma[\mathbf w]_d\subset\mathbb R[\mathbf w]$ the set of SOS polynomials of degree at most $2d$, and define the cone $Q_d(g)\subset\mathbb R[\mathbf w]$ for $g=(g_0,\ldots,g_m)$ by
\begin{equation}
Q_d(g):=\left\lbrace\sum_{j=0}^m\sigma_jg_j:\deg(\sigma_jg_j)\leq 2d,\sigma_j\in\Sigma[\mathbf w]_d,j=0,1,\ldots,m\right\rbrace
\end{equation}
and observe that $Q_d(g)$ consists of polynomials which are non-negative on $\mathbf K$.

Let $\mathbf{b}_d(\mathbf w):=(\mathbf w^{\bm\alpha})_{\vert\bm\alpha\vert\leq d}\in\mathbb R[\mathbf w]^{n_d}$ be the vector of monomials of degree at most $d$. We recall that $n_d$ denotes the binomial number $\binom{n+d}{n}$. For $j=0,...,m$, let $d_j=\lceil \deg(g_j)/2\rceil$, let $M_{d-d_j}(g_j\mathbf{z})$ denote the real symmetric matrix linear in $\mathbf{z}$ corresponding to the entrywise application of $\ell_{\mathbf{z}}$ to the matrix with polynomial entries $g_j\mathbf{b}_{d-d_j}(\mathbf w)\mathbf{b}_{d-d_j}^T(\mathbf  w)$. For $j=0$ and $g_0=1$, the matrix $M_{d}(\mathbf{z}) = \ell_{\mathbf z}(\mathbf{b}_d \mathbf{b}_d^T)$ (where $\ell_{\mathbf z}$ is applied entrywise) is called the moment matrix. For any other value of $j$, it is called a localizing matrix. It turns out that, for all $j=0,1,\ldots,m$, $\ell_{\mathbf z}(g_jq^2)\geq 0$ for all $q\in\mathbb R[\mathbf w]_d$ if and only if $M_{d-d_j}(g_j\mathbf{z})\succeq 0$, which are convex linear matrix inequalities in $\mathbf z$ and where $\succeq$ denotes the positive semi-definite (or Loewner) order.

\paragraph{Moment-SOS hierarchy}
The following finite-dimensional semi-definite programming (SDP) problems are relaxations of the moment problem \eqref{eq:momGMP}:
\begin{equation}\label{eq:momSOSHier}
\begin{aligned}
\inf_{\mathbf z\in\mathbb R^{n_{2d}}}\quad&\ell_{\mathbf z}(h):=\rho^*_d\\
\text{s.t.}\quad&\ell_{\mathbf z}(h_k)\leq b_k,\deg(h_k)\leq 2d,k=1,2,\ldots\\
&M_{d-d_j}(g_j\textbf{z})\succeq 0,j=0,1,\ldots,m
\end{aligned}
\end{equation}
and they are parametrized by the relaxation degree $d\geq\max_{j=0,\ldots,m}d_j$. 
\begin{theorem}[Convergence of the moment-SOS hierarchy, {\cite[Theorem \reviewde{7}]{tacchi21}}]\label{th:LasHieCon}
Suppose that\reviewde{$\ \mathbf K$ is a basic semi-algebraic compact set. Further assume that} there exists $C>0$ such that for any $d\in \mathbb{N}$, if $\mathbf z^{d}\in\mathbb R^{n_{2d}}$ is solution of \eqref{eq:momSOSHier}, then $z^d_{\mathbf 0}\leq C$, with $C$ independent of $d$. \reviewde{Finally assume that there exists a unique solution $\nu^*\in\mathcal M(\mathbf K)_+$ to problem \eqref{eq:simGMP}}. Then there exists a sequence $(\mathbf z^d)_{d}=((\mathbf z^d_{\bm\alpha})_{\bm\alpha\in\mathbb N^{n_{2d}}})_{d}$ such that $\ell_{\mathbf z^d}(h)=\rho_d^*$ and for all $\bm\alpha\in\mathbb N^{n}$
\begin{equation}
\mathbf z_{\bm\alpha}^d\underset{d\rightarrow\infty}\longrightarrow\int_{\mathbf K}\mathbf w^{\bm\alpha}d\nu(\mathbf w).
\end{equation}
In particular, one has $\rho_d^*\rightarrow\rho^*$ as $d\rightarrow\infty$.
\end{theorem}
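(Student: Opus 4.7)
The plan is to follow the standard compactness argument for Lasserre's hierarchy, organized in three stages: (i) extract uniform moment bounds from the single scalar bound $z^d_{\mathbf 0}\leq C$ using the redundant ball constraint $g_1(\mathbf w)=N-\sum_i w_i^2$, (ii) extract a diagonal limit sequence, and (iii) realize the limit as the moment sequence of a feasible, optimal measure for the GMP \eqref{eq:simGMP}, which then forces $\rho_d^*\to\rho^*$.

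For (i), I would exploit the localizing matrix constraint $M_{d-1}(g_1\mathbf z^d)\succeq 0$. Testing it against monomial vectors yields, for every even multi-index $2\bm\alpha$ with $|\bm\alpha|\leq d-1$, the recursion $\sum_{i=1}^n z^d_{2\bm\alpha+2e_i}\leq N\,z^d_{2\bm\alpha}$. Iterating from $z^d_{\mathbf 0}\leq C$ gives $z^d_{2\bm\alpha}\leq C\,N^{|\bm\alpha|}$. For a general multi-index $\bm\gamma=\bm\alpha+\bm\beta$, the Cauchy--Schwarz inequality applied to the positive semi-definite moment matrix $M_d(\mathbf z^d)$ yields $|z^d_{\bm\gamma}|^{2}\leq z^d_{2\bm\alpha}\,z^d_{2\bm\beta}$, and therefore $|z^d_{\bm\gamma}|\leq C\,N^{|\bm\gamma|/2}$ uniformly in $d$ for every fixed $\bm\gamma$, once $d$ is large enough.

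For (ii), a Cantor diagonal extraction produces a subsequence $(d_k)$ along which $z^{d_k}_{\bm\alpha}\to z^{*}_{\bm\alpha}$ for every $\bm\alpha\in\mathbb N^n$. Pointwise convergence preserves positive semi-definiteness of each truncated localizing matrix, so every $M_{r}(g_j\mathbf z^{*})$ is PSD for all $r$ and $j=0,\dots,m$. Putinar's Positivstellensatz (Theorem \ref{th:putPos}), in its dual moment-realization direction, then guarantees that $\mathbf z^{*}$ is the moment sequence of a measure $\nu^{*}\in\mathcal M(\mathbf K)_+$.

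For (iii), each linear constraint $\ell_{\mathbf z^{d_k}}(h_k)\leq b_k$ involves only finitely many moments and therefore survives the limit, showing $\nu^{*}$ is feasible for \eqref{eq:simGMP}, hence $\int_{\mathbf K} h\,d\nu^{*}\geq\rho^{*}$. On the other hand the hierarchy provides the relaxation bound $\rho_d^*\leq\rho^{*}$, so $\ell_{\mathbf z^{d_k}}(h)=\rho^{*}_{d_k}\to \int h\,d\nu^{*}\leq\rho^{*}$. The two inequalities coincide, giving $\rho_{d_k}^{*}\to\rho^{*}$ and identifying $\nu^{*}$ as an optimizer. Since every subsequence of $(\rho_d^{*})$ admits a sub-subsequence converging to $\rho^{*}$ by the same construction, the full sequence converges, and the diagonal-subsequence can be relabelled as the claimed $(\mathbf z^d)_d$. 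The main obstacle is stage (i): propagating a single zeroth-moment bound to uniform control of all moments is precisely where the redundant ball constraint $g_1=N-\sum_i w_i^2$ is essential, and is what allows the Putinar machinery to be invoked once the diagonal extraction has been performed.
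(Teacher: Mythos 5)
The paper offers no proof of this theorem --- it is quoted directly from \cite[Theorem 5]{tacchi21} --- and your argument is precisely the standard compactness proof underlying that result: uniform moment bounds propagated from $z^d_{\mathbf 0}\leq C$ via the ball constraint $g_1=N-\sum_i w_i^2$ and Cauchy--Schwarz on the moment matrix, a diagonal extraction, the moment-side (dual) form of Putinar's theorem to realize the limit sequence as a measure on $\mathbf K$, and passage to the limit in the finitely-supported linear constraints to conclude feasibility, optimality and $\rho_d^*\to\rho^*$. The only imprecise point is the closing ``relabelling'': to obtain convergence of the \emph{whole} sequence of moments to those of the specific measure $\nu$ (rather than along the extracted subsequence) one needs uniqueness of the optimal moment sequence --- which holds in this paper's application by the concentration result of Theorem \ref{th:entMvSolCon} --- combined with the same sub-subsequence argument you already applied to $\rho_d^*$.
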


\subsection{Application to our problem}\label{subsec:appPro}
\paragraph{Entropy MV solution as a GMP}
In the scalar hyperbolic case, the measures $\nu_i$ under consideration are from the collection $\bm\nu$, or $\bm\nu$ and $\bm\vartheta$ when considering Kruzhkov's entropies. The sets $\mathbf K_i$ all correspond to $\mathbf K=\mathbf T\times\mathbf X\times\mathbf\Xi\times\mathbf U$. The polynomials $h_{i,j}$ are given in \eqref{eq:couConLaw} (conservation law), \eqref{eq:conEntLaw} when considering polynomial entropy pairs or  \eqref{eq:HPsiAlpha} (and compatibility conditions between $\bm\nu$ and $\bm\vartheta$ \eqref{eq:comConTheNu} and similar equations) when considering Kruzhkov entropy pairs (entropy inequalities), and \eqref{eq:1bouMea}-\eqref{eq:lasMarCon} (marginal constraints). For the sake of readibility, we shall only consider the case of polynomial entropies and a formulation only on measures  $\bm\nu$.

We may also define an objective functional
\begin{multline}\label{eq:objFun}
\int_{\mathbf K}hd\nu+\int_{\mathbf K}h_0d\nu_0+\int_{\mathbf K}h_Td\nu_T+\sum_{i=1}^n\left(\int_{\mathbf K}h_{L,i}d\nu_{L,i}+\int_{\mathbf K}h_{R,i}d\nu_{R,i}\right),
\end{multline}
with $h$, $h_0$, $h_T$, $(h_{L,i})_{i=1}^n$, $(h_{R,i})_{i=1}^n\in\mathbb R[t,\mathbf x,\bm\xi,y]$.

If the initial measure is concentrated on the graph of the initial condition and if, in addition, one imposes suitable boundary measures as exposed in Remark \ref{rem:bouCon}, then the choice of the objective functional is not crucial to recover the entropy MV solution of scalar hyperbolic PDE. Indeed, as a consequence of Theorem \ref{th:entMvSolCon}, the corresponding Young measure is concentrated: there is nothing to be optimized. However, our aim is to approximate the GMP by a finite dimensional optimization problem in order to solve it numerically and, then, the choice of the objective functional will impact the convergence of the corresponding relaxations. From experimental observations, two objective functionals seem to produce interesting results: the maximum of the opposite of the entropy constraints and the minimum of the trace of moment matrix. Choosing the latter seems to be a good heuristic: minimizing the nuclear norm of a matrix leads to reducing its rank (see \cite{recht10}), which tends to favorise measures with localized support. However, there is up to date still no proof of a general effective functional.

Finally, one is able to define a GMP:
\begin{equation}\label{eq:entMvSolGMP}
\begin{aligned}
&\inf_{\nu,\nu_T}&&\eqref{eq:objFun}\text{ (objective functional)}\\
&\text{s.t.}&&\eqref{eq:couConLaw}\text{ (conservation law)},\\
&&&\eqref{eq:conEntLaw}\text{ (entropy inequality)},\\
&&&\eqref{eq:1bouMea}-\eqref{eq:lasMarCon}\text{ (marginal constraints)},
\end{aligned}
\end{equation}
where the infimum is taken over measures $\nu\in\mathcal M(\mathbf K)_+,\nu_T\in\mathcal M(\mathbf K_T)_+$.

\reviewde{
    Theorem \ref{th:LasHieCon} extends to the case of multiple measures, as discussed in \cite{lasserre09} and shown in \cite{tacchi21}. Note that the compact sets $\mathbf T$, $\mathbf X$, $\mathbf\Xi$ and $\mathbf U$ as defined before can be expressed as basic semi-algebraic compact sets, so that $\mathbf K$ is also a basic semi-algebraic compact set:
    \begin{equation}
        \begin{aligned}
            &\mathbf T=\lbrace t\in\mathbb R:t(T-t)\geq 0\rbrace,\\
            &\mathbf X=\lbrace\mathbf x\in\mathbb R:(x_1-L_1)(R_1-x_1)\geq 0,\dots,(x_n-L_n)(R_n-x_n)\geq 0\rbrace,\\
            &\mathbf\Xi=\lbrace\bm\xi\in\mathbf\Xi:\xi_1(1-\xi_1)\geq 0,\ldots,\xi_p(1-\xi_p)\geq 0\rbrace,\\
            &\mathbf U=\lbrace y\in\mathbb R:(y-\underline u)(\overline u-y)\geq 0\rbrace.
            \end{aligned}
    \end{equation}
    Moreover, the constraint for $\bm\alpha=\mathbf 0$ (see equation \eqref{eq:1bouMea} in Appendix \ref{ap:impConMomNu}) yields the relaxed linear constraint $\mathbf z_{\mathbf 0}=\ell_{\mathbf z}(1)=\int_{\mathbf T\times\mathbf X\times\mathbf\Xi}dtdxd\rho(\bm\xi)\leq\vert\mathbf T\vert\vert\mathbf X\vert$. Finally, supposing that $\sigma_0$ is concentrated on the graph of the weak-parametric entropy solution, \eqref{eq:entMvSolGMP} admits a unique solution. Hence the hypotheses of Theorem \ref{th:LasHieCon} are satisfied.
}

Then, optimal solutions of the moment-SOS hierarchy \eqref{eq:momSOSHier} (adapted to the present context) converge to optimal solutions of \eqref{eq:entMvSolGMP} as $d$ goes to infinity. In particular, one may extract the MV solution of \eqref{eq:mvCom}, provided that $\sigma_0$, $\gamma_{L,i}$ and $\gamma_{R,j}$ are concentrated for $1\leq i\leq n$, as already discussed in Remark \ref{rem:bouCon}.

\subsection{Post-processing quantities of interest}
We have seen in the previous section how to obtain approximate sequences $\mathbf z^d$ of  moments of the measure $\nu$ on $\mathbf K$, such that $d\nu(t,\mathbf x , \bm \xi , y) = d\mu_{t,\mathbf x , \bm \xi}(y) dt d\mathbf x d\rho(\bm\xi)$ where $\mu$ is the measure-valued solution supported on the graph of the solution. In this section, we present how to construct an approximation of the function $u$ thanks to the Christoffel-Darboux function and its ability to estimate the support of a measure (see \cite{lasserre22} for further details). Also, we show how to obtain approximations of statistical moments of variables of interest that are functions of the solution, possibly using a moment completion technique and the Christoffel-Darboux function. 

\subsubsection{Approximation of the graph of the solution}\label{subsubsec:appGraSol}

We consider that we have obtained an approximation $\mathbf z^d$ of the moments of order $2d$ of the measure  $\nu$, which is a measure supported on the graph of the function $u(t,\mathbf{x},\bm \xi)$. In order to approximate the function from the moments, we rely on an approximate Christoffel-Darboux function associated with the measure (that has to be carefully defined), which tends to take high values on the support of the measure. Thus, finding the minimizers of the approximate inverse Christoffel-Darboux function for given $(t,\mathbf x,\bm\xi)\in\mathbf T\times\mathbf X\times\mathbf\Xi$ gives an approximation of $u(t,\mathbf x,\bm\xi)$. 
For $\mathbf{w} = (t,\mathbf x,\bm \xi, y) \in \mathbf K$, we  let $\mathbf{b}_d(\mathbf{w})$ be a basis of monomials of order up to $d$ and $M_d(\mathbf{z}^d)=\ell_{\mathbf z^d}(\mathbf{b}_d(\cdot)\mathbf{b}_d(\cdot)^T) $ be the corresponding moment matrix, that is the Gram matrix of the basis $\mathbf{b}_d(\mathbf{w})$ for the measure $\nu^d$ corresponding to $\mathbf{z}^d$. When $M_d(\mathbf{z}^d)$ is invertible, the inverse Christoffel-Darboux function is defined by
$$
q_{\nu^d}(\mathbf w) = \mathbf{b}_d(\mathbf{w})^T M_d(\mathbf{z}^d)^{-1} \mathbf{b}_d(\mathbf{w}) = \sum_{i=1}^{n_d} \lambda_i^{-1} (\mathbf{b}_d(\mathbf{w})^T\mathbf v_i)^2 
$$
where the $(\lambda_i,\mathbf{v}_i)$ are eigenpairs of  $M_d(\mathbf{z}^d)$, and 
the polynomials $p_i(\mathbf{w}) = \lambda_i^{-1/2} \mathbf{b}_d(\mathbf{w})^T\mathbf v_i$ form an  orthonormal basis of the space of polynomials of order $d$ in $L^2_{\nu^d}(\mathbf{K})$.
 In the case where  $M_d(\mathbf{z}^d)$ is singular, a regularization is introduced by considering the function
 $$
 q_{\nu^d,\beta}(\mathbf w) = \mathbf{b}_d(\mathbf{w})^T (M_d(\mathbf{z}^d) + \beta \mathbf I) ^{-1} \mathbf{b}_d(\mathbf{w}) = \sum_{i=1}^{n_d} (\lambda_i + \beta)^{-1} (\mathbf{b}_d(\mathbf{w})^T\mathbf p_i)^2, 
 $$
 which turns out to be the inverse Christoffel-Darboux function of a measure $\nu^d + \beta \nu_0$, where $\nu_0$ is the measure on $\mathbf K$ for which the monomials form an orthonormal family \reviewde{(see \cite{marx2})}.
Exploiting the fact that $q_{\nu^d + \beta \nu_0}$ tends to take low values on the graph of $u$,  an approximation of $u$ is defined  by
 $$f_{\beta,d}(t,\mathbf x, \bm \xi) \in \arg\min_{y\in \mathbf U} q_{\nu^d + \beta \nu_0}(t,\mathbf x, \bm \xi,y).$$ 
Further information can be found in \cite{marx2}.

\reviewde{
\begin{remark}
In this paper, we consider  polynomial moments of the occupation measure. This allows us to exploit the localization property of the associated Christoffel functions and to estimate the graph of the solution a posteriori. The use of moments associated with other functions, such as piecewise polynomials, could probably be considered as well (see \cite{mula22} for the use of piecewise polynomial moments and a short review on the use of other types of moments).  
\end{remark}
}

\subsubsection{Statistical moments of variables of interest}\label{subsubsec:staMomVarInt}
Considering $\bm\xi$ as a random parameter, one may be interested in computing the expectation of some variable of interest $Q(\bm \xi) = F(u(\cdot,\cdot,\bm \xi) ; \bm \xi)$, where 
$F(\cdot,\bm \xi)$ is a real-valued function taking as input time-space functions.
In some particular situations, it is possible to directly obtain an estimation of this quantity from 
the moments $\mathbf z^d$.
In particular, this is the case when $$
Q(\bm \xi) = \int_{\mathbf T \times \mathbf X} G(u(t,\mathbf x , \bm \xi) , t,\mathbf x , \bm \xi) dt d\mathbf x,$$ 
with  $G$ is polynomial since then 
$$\mathbb{E}(Q(\bm \xi)) = \int_{\mathbf T \times \mathbf X \times\mathbf\Xi} G(\mathbf w) d\nu(\mathbf w) \approx \ell_{\mathbf z^d}(G).  
$$
We may also be interested in obtaining statistical moments of the solution $u(t,\mathbf x , \bm \xi)$
at different points $(t, \mathbf x)$, which is not a variable of interest in the above format. Of course, these quantities  can  be estimated from point-wise evaluations of $u$ based on the technique presented in the previous section. 
However, an alternative approach is possible to estimate  the statistical moments
\begin{equation}\label{eq:f_k}
    \int_{\mathbf\Xi}u(t,\mathbf x,\bm\xi)^kd\rho(\bm\xi) := f_k(t,\mathbf x)
\end{equation}
for all $(t,\mathbf x)\in\mathbf T\times\mathbf X$, from the the approximate moments $\mathbf z^d$ of the measure $\nu$.
We know that the measure $\nu$ can be disintegrated into its marginal $\lambda_{\mathbf T}\otimes\lambda_{\mathbf X}$ and its conditional measure $d\nu( \bm \xi ,  y\vert t,\mathbf x)$, such that $d\nu(t,\mathbf x , \bm \xi , y) = d\nu( \bm \xi ,  y\vert t,\mathbf x) dt d \mathbf x.$
\\
We assume that $f_k(t,\mathbf x)$ takes values in a compact set $\mathbf F:=[\underline F,\overline F]$ which can be easily obtained in terms of $\mathbf U$ and $k$. 
We then let $\lbrace\widetilde{g_j}\rbrace_{j=1}^m$, $m\in\mathbb N$, be polynomials that describe  the semi-algebraic compact set $\mathbf T\times\mathbf X\times\mathbf F$. Letting $\mathbf z$ be the sequence of moments of $\nu$, we may notice that for all $\bm\alpha = (\alpha_1 , \alpha_2)\in\mathbb N^{n+1}_{2d-k}$, $$\mathbf z_{\alpha_1,\alpha_2,0,k}=\int_{\mathbf T}\int_{\mathbf X}t^{\alpha_1}\mathbf x^{\alpha_2}f_k(t,\mathbf x)d\mathbf xdt =
\int_{\mathbf T}\int_{\mathbf X}\int_{\mathbf F}t^{\alpha_1}\mathbf x^{\alpha_2}y\delta_{f_k(t,\mathbf x)}(dy)d\mathbf xdt.$$

Our goal is then here to approximate the support of the measure $\delta_{f_k(t,\mathbf x)}(dy) d\mathbf x dt$ from its moments $\bm\omega$ in order to recover the graph of $f^k(t,\mathbf x)$. We are faced with the issue that the information we have on the moments is incomplete, namely, we only have the moments $\omega_{\bm\alpha,0}$ for $\bm\alpha\in\mathbb N^{n+1}_{2d}$ and $\omega_{\bm\alpha,1}$ for $\bm\alpha\in\mathbb N^{n+1}_{2d-k}$. Following \cite{henrion20}, we introduce the following finite-dimensional semi-definite programming (SDP) problems to recover the graph of $f_k(t,\mathbf x)$ from incomplete moment information:
\begin{equation}\label{optim-completion}
\begin{aligned}
&\inf_{\bm\omega\in\mathbb R^{(n+2)_d}}&&\text{Tr}(M_d(\bm\omega))\\
&\text{s.t. }&&\omega_{\bm\alpha,0}=z^d_{\alpha_1,\alpha_2,0,0},\quad\forall\bm\alpha\in\mathbb N_{2d}^{n+1}\\
&&&\omega_{\bm\alpha,1}=z^d_{\alpha_1,\alpha_2,0,k},\quad\forall\bm\alpha\in\mathbb N_{2d-k}^{n+1}\\
&&&M_{d-d_j}(\widetilde{g_j}\bm\omega)\succeq 0, \quad j=0,...,m,
\end{aligned}
\end{equation}
where $\text{Tr}(M)$ denotes the trace of a matrix $M$. We recall that $M_d(\bm\omega)$ denotes the moment matrix of $\bm\omega$.
From this, we can compute  the corresponding Christoffel-Darboux approximation of $f_k$, following the approach of the previous section, see \cite{marx2,henrion20}.

\section{Numerical examples}\label{sec:numExa}
For numerical illustration, we consider Burgers-type equations with parametrised initial condition or parametrised flux. 

The choice of entropy pairs is important to ensure uniqueness of the solution. Implementing Kruzkhov's entropy pairs is possible (as seen in Section \ref{subsec:froWeaFor}), but computationally heavy since it requires a reformulation with measures in higher dimension. It is known that the entropy $\eta(y)=y^2$ provides  sufficient constraints to ensure uniqueness of the  entropy solution for Burgers equation \cite{delellis04}. Then, instead of using  Kruzkhov's pairs, we here rely on the following family of polynomial entropies:
\begin{equation*}
    \eta_l(y)=y^{2l},\quad\forall l\in\mathbb N
\end{equation*}
and the corresponding polynomial functions $\mathbf{q}_l$. As an objective function, we choose 
the trace of the moment matrix (see discussion in section \ref{subsec:appPro}).

Numerical experiments are performed with the Matlab interface  Gloptipoly3 \cite{henrion07}. 

In order to approximate the graph of solutions $u$, we use the method described in Section \ref{subsubsec:appGraSol}. Numerically, the optimization of the Christoffel function is achieved through a discretization of $\mathbf T$, $\mathbf X$, $\mathbf\Xi$ and $\mathbf U$ and the computation of the Christoffel function at each point of the grid. \reviewde{We set the value of the regularization parameter $\beta$ to $10^{-7}$.}

\secondereview{
\begin{remark}[On the invertibility of the moment matrix]
    Imposing that the moment matrix is positive semi-definite does not guarantee its invertibility, hence the regularization. However, the regularized matrix may be  poorly conditioned for high values of relaxation order $d$, but we do not directly manipulate the ill-conditioned matrix. In order to obtain the Christoffel-Darboux kernel, we compute a spectral  decomposition of the non-regularized matrix and use the inverse of shifted eigenvalues. This seems to mitigate numerical instabilities. However, a further stability analysis should be required.
\end{remark}
}

 We shall in the following denote by $\widetilde{u_d}$ the Christoffel-Darboux approximation of the solution using  approximate moments from a degree  $d$ of the hierarchy, and by $u$ the exact solution of our Riemann problem.

\reviewun{
    \begin{remark}[On computational complexity]\label{rem:complexity}
        In our numerical experiments, we used an interior-point algorithm to solve SDP problems, whose complexity is analyzed in \cite{nesterov94}. Let $q_d$ be the number of unknown moments of a measure. Our SDP problem has  semi-positive inequalities on 
        matrices $M_{d-d_j} \in \mathbb{R}^{r_i \times r_i}$, $0\le j \le m$, with $r_j \le q_d/2$. Letting $r=\sum_{j=0}^mr_j$, the complexity of the interior-point algorithm is $O(\sqrt r(q_d^2\sum_{j=0}^mr_j+q_d\sum_{j=0}^mr_j^3))$. Thus, the complexity is $O( q_d^{9/2})$. 
    \end{remark}}
    \reviewun{
    \begin{remark}[Curse of dimension]
    For a relaxation degree $d$, the 
     number of unknown moments of a measure is $q_d=\binom{d+ n+p+2}{d} = O(d^{n+p+2})$. Thus from Remark \ref{rem:complexity}, we deduce that the complexity of the interior-point algorithm is $O(d^{\frac92(n+p+2)})$. To circumvent the curse of dimension and address problems with high dimension $n+p+2$, we should exploit low-dimensional structures in the set of moments, such as sparsity \cite{magron23} or low-rankness. This will be addressed in future works. 
    \end{remark}
    }
    \reviewun{
    \begin{remark}
        Mesh-based methods can be used to approximate solutions in terms of the physical variables and the parameters, also facing the curse of dimension. Due to the presence of discontinuities in terms of the physical variables and the parameters (not necessarily aligned with the coordinates), adaptive mesh refinement methods are required, leading to prohibitive computational cost in high dimension. Our method is mesh-free and seems to capture well discontinuities. Further comparisons need to be performed in future works.
    \end{remark}
}
\reviewde{
    \begin{remark}[Obtaining the moment constraints]
        We here consider problems with polynomial or piecewise polynomial data, that allowed us to compute exactly the moments of the given measures $\sigma_0,\gamma_{L,i}$ and $\gamma_{R,i}$, $1\leq i\leq n$. For more complicated data and high dimensional problems, numerical integration methods could be required, such as Monte Carlo methods.
    \end{remark}
}

\subsection{Riemann problem for the Burgers equation with parametrised initial condition}\label{subsec:RieProPos}
As a first example, we consider the classical one-dimensional Riemann problem (see e.g., \cite{evans97}) for a Burgers equation, with a parameter-independent flux 
\begin{equation*}
\mathbf f(u)=\frac 1 2u^2.
\end{equation*}
and where we parametrise the initial position of the shock, 
taking 
\begin{equation*}
u_0(x,\xi)=\left\lbrace
\begin{aligned}
1\text{ if }x<\frac 1 4(\xi-1),\\
0\text{ if }x\geq\frac 1 4(\xi-1).
\end{aligned}
\right.
\end{equation*}
with a  parameter $\xi$ taking values in $\mathbf\Xi=[0,1]$.
We know that  the solution takes values  in $\mathbf U=[0,1]$. The time-space window on which we consider the solution is $\mathbf T=[0,\frac 1 2]$ and $\mathbf X=[-\frac 1 2,\frac 1 2]$.


The unique  solution  is
\begin{equation}\label{eq:anaSolParIniCon}
u(t,x,\xi)=\left\lbrace
\begin{aligned}
1\text{ if }x<\frac 1 4(\xi-1)+\frac t2,\\
0\text{ if }x\geq\frac 1 4(\xi-1)+\frac t2,
\end{aligned}
\right.
\end{equation}
Equipping $\mathbf\Xi$ with the Lebesgue measure on $[0,1]$, it yields the following statistical moments
\begin{equation*}
\begin{split}
f_k(t,x) &= \int_{\mathbf\Xi}u(t,x,\xi)^k d\rho(\xi)\\
&=1-\min(1,\max(0,1-2t+4x))+0^k\min(1,\max(0,1-2t+4x)),
\end{split}
\end{equation*}
for all $k\in\mathbb N$, for all $(t,x)\in\mathbf T\times\mathbf X$. We may notice that in this simple case, $f_k$ is independent on $k$ for $k\geq1$.

\reviewde{We recall that at relaxation degree $d$, the number of unknown moments $2q_d\leq2\binom{d+ n+p+2}{d}$. Indeed, each measure is expected to yield \(q_d\) moments, and we optimize over the two measures \(\nu\) and \(\nu_T\). However, \(\nu_T\) has less unknown moments, since it is supported on \(\mathbf K_T\) and some of its moments are dependant.}
 
\paragraph{Retrieving the graph of the solution}
Figure \ref{fig:xi0} shows the graphs of the approximate solution $\widetilde{u_d}(t,x,0)$ for $(t,x)\in\mathbf T\times\mathbf X$ (so that the shock is initially located at $x=-\frac14$), with hierarchy's degree $d=2,5,8$.

\begin{figure}[H]
    \centering
    \begin{subfigure}[b]{.45\textwidth}
        \centering
        \includegraphics[width=\textwidth]{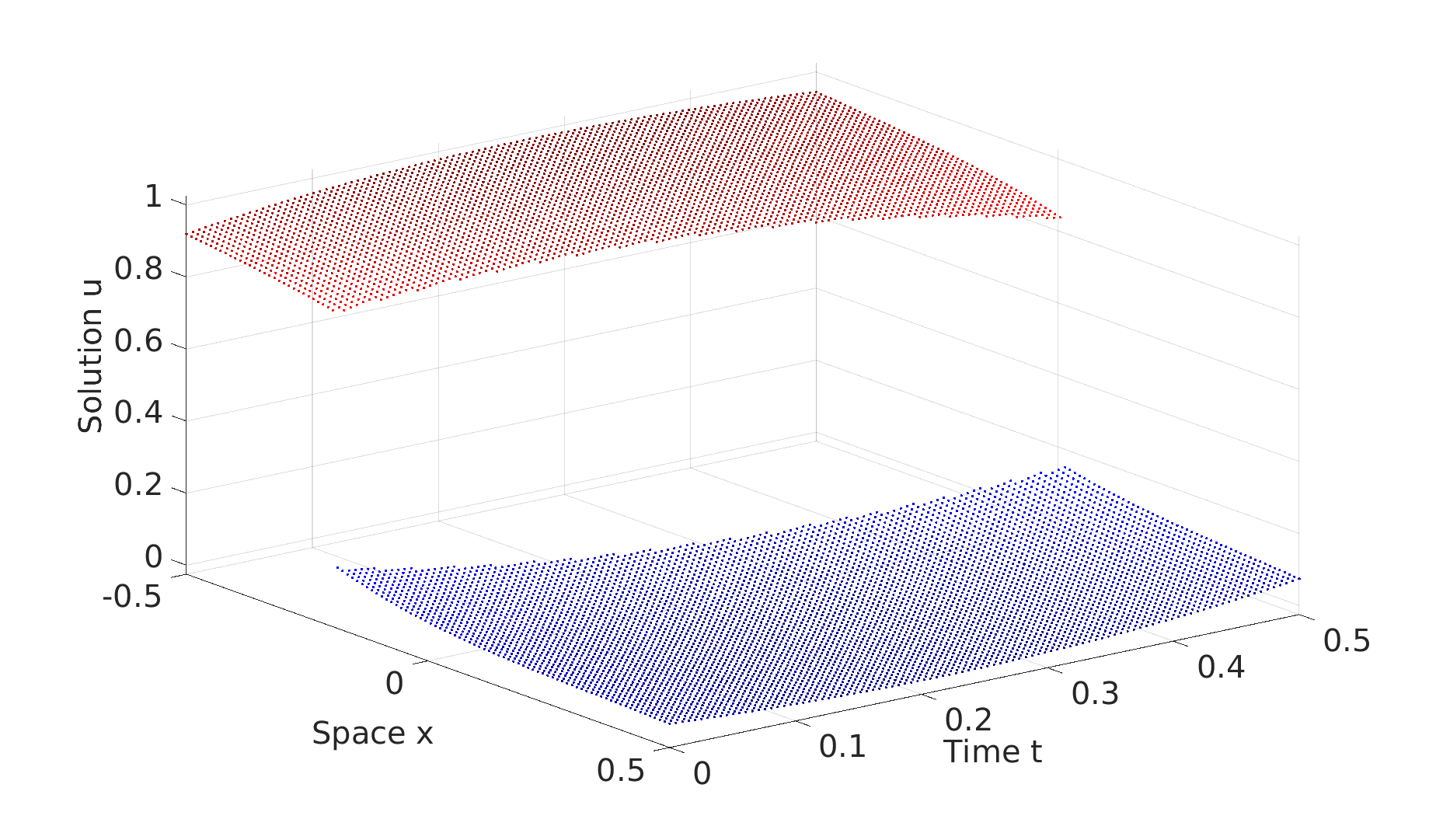}
        \caption{$d=2$}
        \label{fig:d2xi0}
    \end{subfigure}
    \begin{subfigure}[b]{0.45\textwidth}
        \centering
        \includegraphics[width=\textwidth]{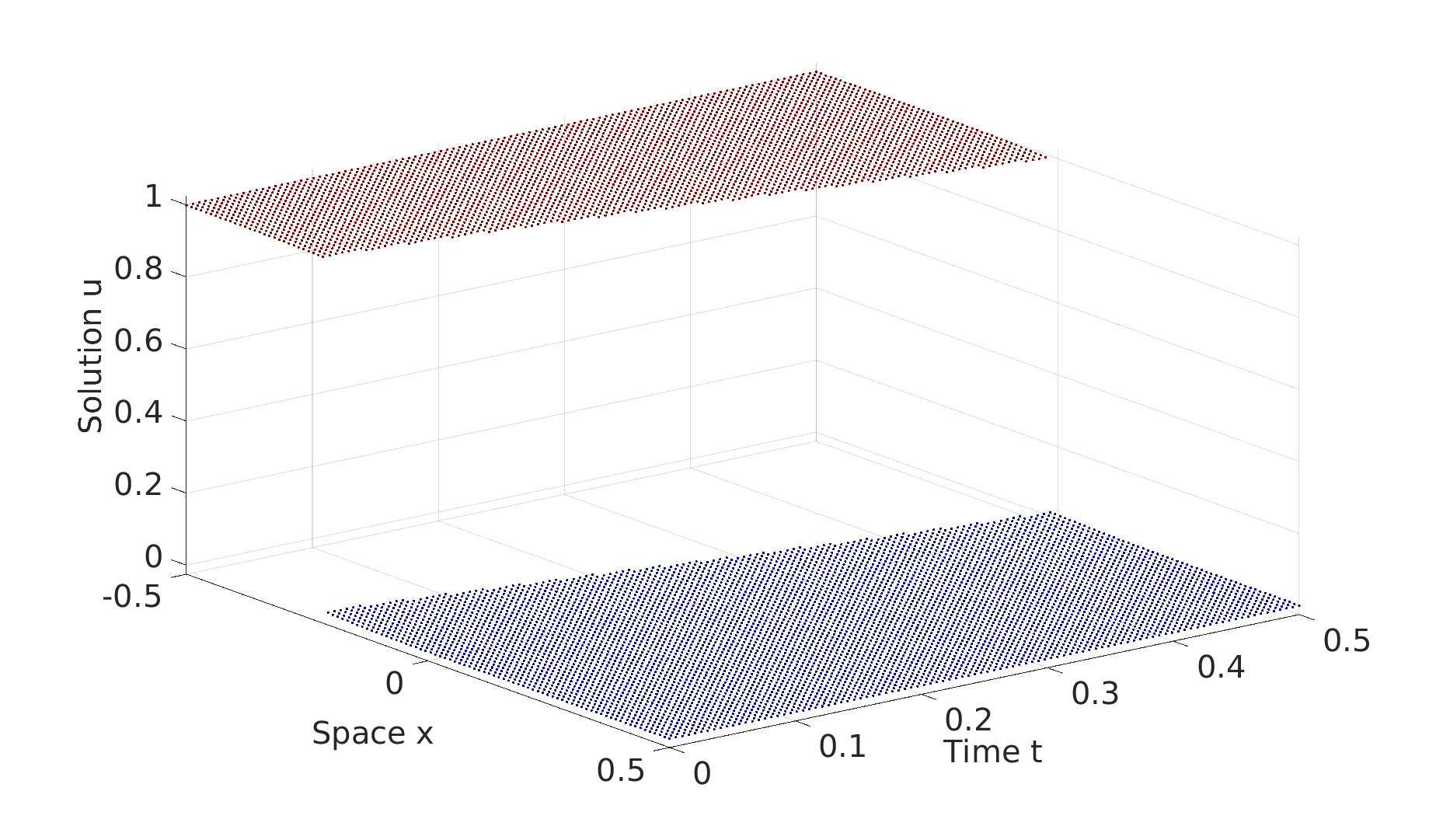}
        \caption{$d=5$}
        \label{fig:d5xi0}
    \end{subfigure}
    \begin{subfigure}[b]{0.45\textwidth}
        \centering
        \includegraphics[width=\textwidth]{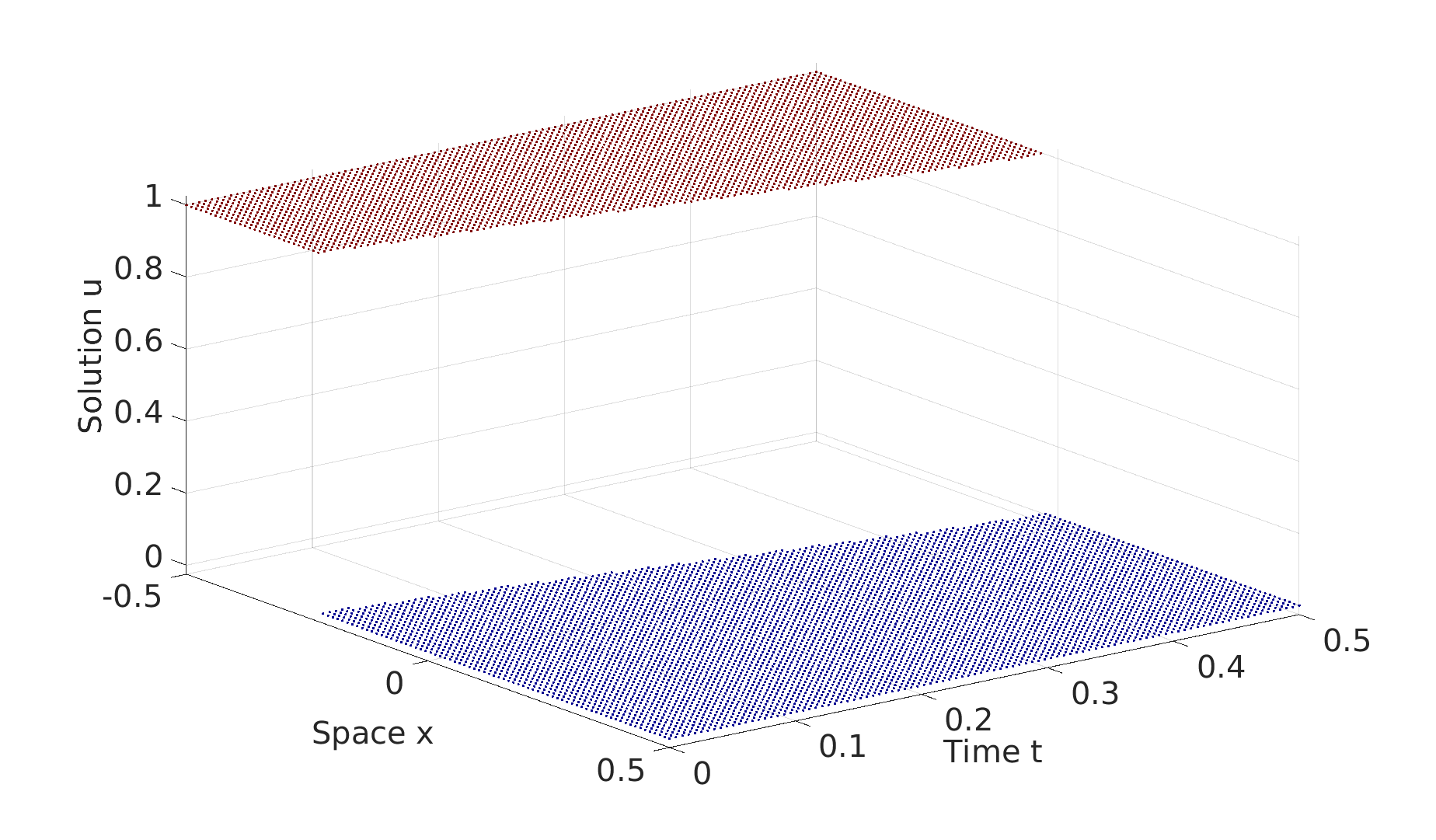}
        \caption{$d=8$}
        \label{fig:d8xi0}
    \end{subfigure}
    \caption{Graphs of the approximate solution $\widetilde{u_d}(t,x,0)$ for $d=2,5,8$}
    \label{fig:xi0}
\end{figure}

\reviewde{Figure \ref{fig:sup} shows the graphs of the approximate solution $\widetilde{u_d}(\frac14,x,\xi)$ for $x\in\mathbf X$ and \(\xi=0,1\) (so that the shock is initially located at $x=-\frac14$ and \(x=0\) respectively), with relaxation degree  $d=2,5,8$, superposed with the exact solution.}

\begin{figure}[H]
    \centering
    \begin{subfigure}[b]{.49\textwidth}
        \centering
        \includegraphics[width=\textwidth]{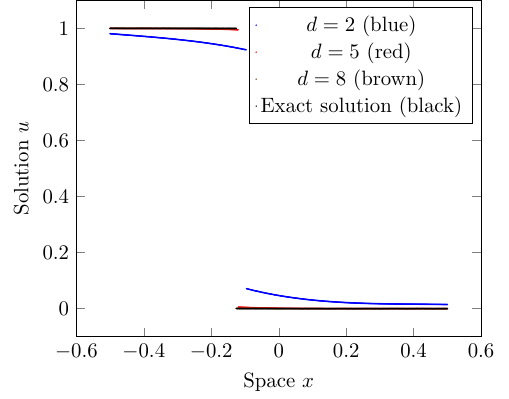}
        \caption{\reviewde{\(\xi=0\)}}
        \label{fig:supXi0}
    \end{subfigure}
    \begin{subfigure}[b]{.49\textwidth}
        \centering
        \includegraphics[width=\textwidth]{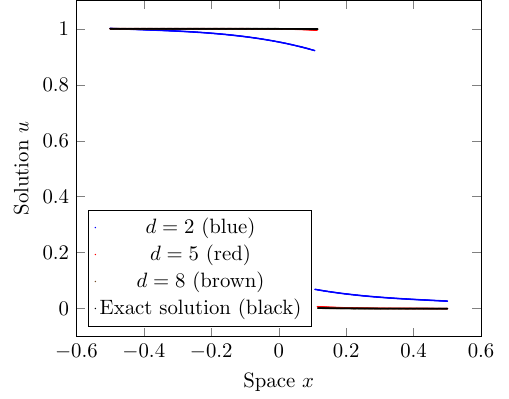}
        \caption{\reviewde{\(\xi=1\)}}
        \label{fig:supXi1}
    \end{subfigure}
    \caption{\reviewde{Graphs of the approximate solution $\widetilde{u_d}(\frac14,x,\xi)$ for $d=2,5,8$ superposed with the exact solution}}
    \label{fig:sup}
\end{figure}




\reviewde{We see that our approximation is almost indistinguishable from the exact solution for \(d=5\), and indistinguishable from the exact solution \(d=8\).} We observe the same results as in \cite{marx1}, where discontinuities are very well resolved as early as $d=5$.

\paragraph{Error estimation}
We choose to compute two different errors of our approximate solution.

\subparagraph{$l^1(\mathbf T\times\mathbf X\times\mathbf\Xi)$ error} We randomly pick $25$ values in $\mathbf\Xi$, and consider $25$ equidistant values in $\mathbf T$ and $\mathbf X$. We denote the test sets $\mathbf\Xi_e$, $\mathbf T_e$ and $\mathbf X_e$ respectively. We study the evolution of the relative $l^1$ error with respect to the degree $d$ of the hierarchy. Namely, we are interested in
\begin{equation*}
    e_g(d):=\frac{\Vert u-\widetilde{u_d}\Vert_{l^1(\mathbf T_e\times\mathbf X_e\times\mathbf\Xi_e)}}{\Vert u\Vert_{l^1(\mathbf T_e\times\mathbf X_e\times\mathbf\Xi_e)}}.
\end{equation*}
The results are presented in Table \ref{tab:errGlo} \reviewde{and on Figure \ref{fig:ploGloErr}.}
\begin{table}[h]
    \centering
    \begin{tabular}{c|c c c c c c c}
        $d$ & 2 & 3 & 4 & 5 & 6 & 7 & 8 \\
        \hline
        \reviewde{$q_d$} & \reviewde{140} & \reviewde{420} & \reviewde{990} & \reviewde{2002} & \reviewde{3640} & \reviewde{6120} & \reviewde{9690} \\
        \hline
        $e_g(d)$ & 0.0850 & 0.0267 & 0.0191 & 0.0168 & 0.0165 & 0.0167 & 0.0163
    \end{tabular}
    \caption{Number of unknowns $q_d$ and error $e_g(d)$ for $d=2,\dots,8$}
    \label{tab:errGlo}
\end{table}

\begin{figure}
    \centering
    \begin{tikzpicture}[scale = .85]
        \begin{semilogyaxis}[
            xlabel={$d$},
            ylabel={$e_g$},
        ]
        
        \addplot
            coordinates {
            (2,0.084961690837201)(3,0.026712023375538)(4,0.019084737368510)(5,0.016842460027072)(6,0.016530370254742)(7,0.016732767323110)(8,0.016263168579668)
            };
            
        \end{semilogyaxis}
    \end{tikzpicture}
    \caption{\reviewde{Evolution of the error $e_g$ with relaxation degree $d$}}
    \label{fig:ploGloErr}
\end{figure}
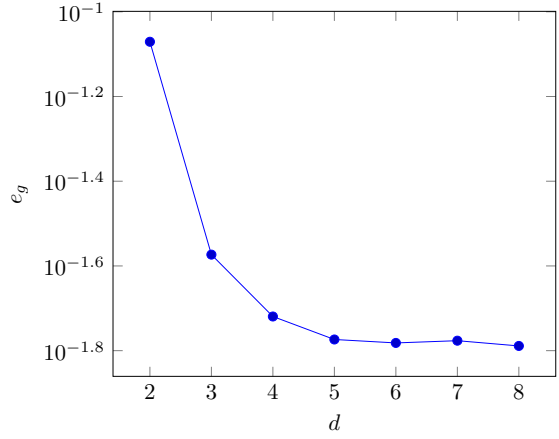

We observe a fast convergence of the error for small values of $d$. The convergence is not monotone and rather slow for high values of $d$. \reviewde{It may thus seem interesting to investigate where the errors are concentrated in our approximation. For illustrative purpose, we plot the distributed error $\varepsilon(t,x)=\vert\widetilde{u_5}(t,x,0.2)-u(t,x,0.2)\vert$ for $(t,x)\in\mathbf T_e\times\mathbf X_e$ on Figure \ref{fig:disErr}. We   observe that the errors are mostly concentrated around the shock, but it is noticeable that the closer to the time boundaries, the worse they are.}

\begin{figure}
    \centering
    \begin{subfigure}[b]{.49\textwidth}
        \centering
        \includegraphics[width=\textwidth]{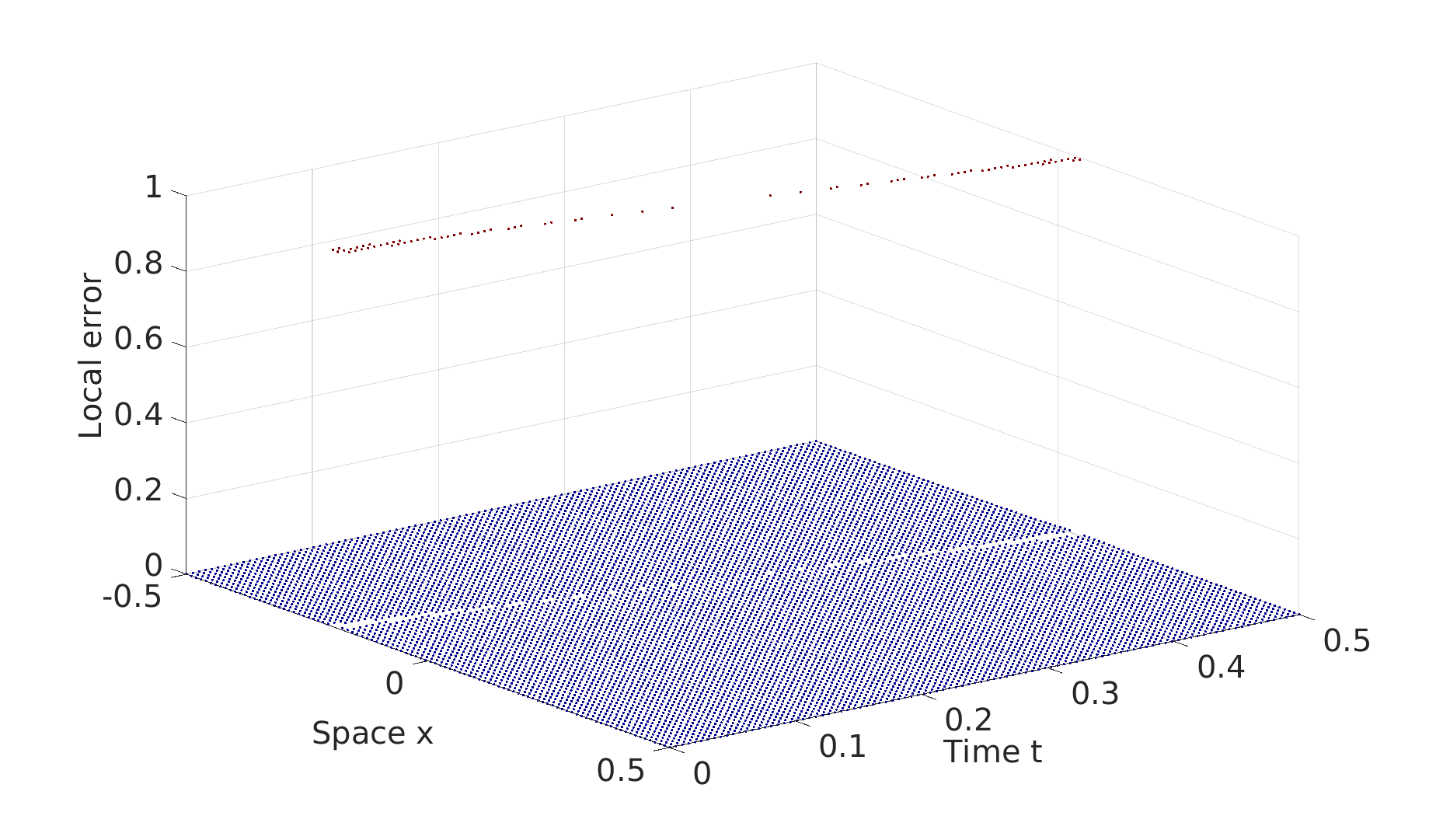}
        \caption{\reviewde{\(z\) axis from \(0\) to \(1\)}}
        \label{fig:disErrLar}
    \end{subfigure}
    \begin{subfigure}[b]{.49\textwidth}
        \centering
        \includegraphics[width=\textwidth]{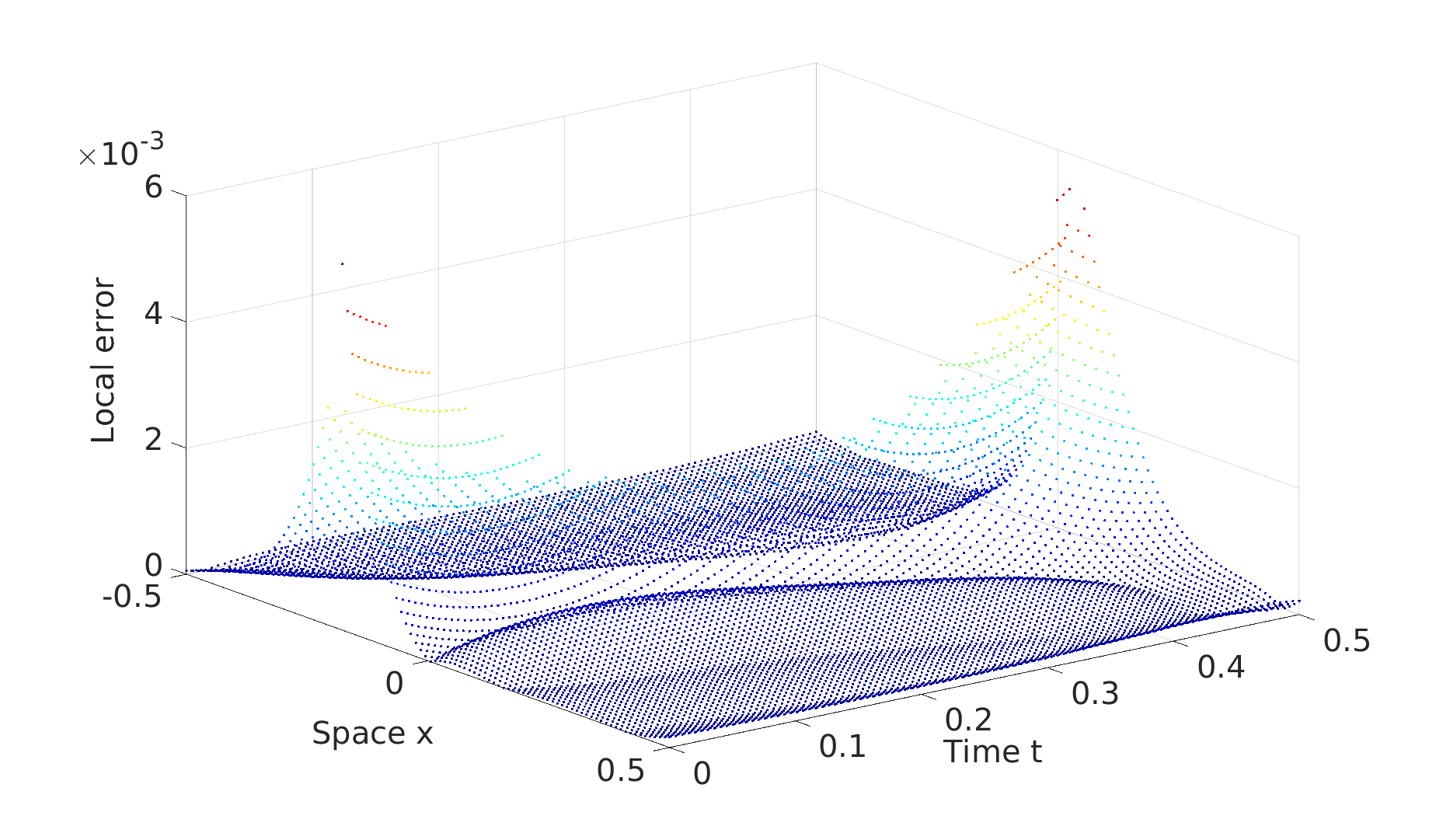}
        \caption{\reviewde{\(z\) axis from \(0\) to \(6\times10^{-3}\)}}
        \label{fig:disErrWoutSho}
    \end{subfigure}
    \caption{\reviewde{Graph of the error $\varepsilon(t,x)=\vert\widetilde{u_5}(t,x,0.2)-u(t,x,0.2)\vert$}}
    \label{fig:disErr}
\end{figure}

\reviewde{We observe similar results on other numerical tests, although for smaller values of $d$ we also observe large errors near spatial boundaries. }


\subparagraph{\secondereview{On the impact of the regularization parameter}}

\secondereview{The choice of the regularization parameter is an open question. In our experiments we chose to work with \(\beta=10^{-7}\), which avoids numerical instabilities. The results can be improved with a suitable choice of \(\beta\). For illustrative purpose, we represent the evolution of the  error \(e_g\) for different values of \(\beta\) in Table \ref{tab:beta0}.  We define \(\beta_m^d:=1.001\times\vert\lambda_m^d\vert\),  \(\lambda_{m}^d\)  the minimal eigenvalue of the moment matrix obtained for a hierarchy of degree \(d\).} \secondereview{For \(d\geq6\), \(\lambda_m^d<0\) and minimizing the Christoffel-Darboux kernel without regularization is unstable.}

\begin{table}[h]
    \centering
    \begin{tabular}{c|c|c|c|c|c|c|c}
        \(d\) & 2 & 3 & 4 & 5 & 6 & 7 & 8\\
        \hline
        \(\beta=10^{-7}\) & 0.0850 & 0.0267 & 0.0191 & 0.0168 & 0.0165 & 0.0167 & 0.0163\\
        \hline
        \(\beta=0\) & 0.0850 & 0.0264 & 0.0142 & 0.00830 & X & X & X\\
        \hline
        \(\beta=\beta_m^d\) & 0.0895 & 0.0270 & 0.0142 & 0.00856 & 0.0186 & 0.0134 & 0.00912\\
        \hline
        \(\beta_m^d\) & \(1\times10^{-4}\) & \(2\times10^{-7}\) & \(6\times10^{-10}\) & \(6\times10^{-14}\) & \(1\times10^{-12}\) & \(1\times10^{-11}\) & \(2\times10^{-11}\)
    \end{tabular}
    \caption{\secondereview{Error $e_g(d)$ for $d=2,\dots,8$ for different choices of \(\beta\)}}
    \label{tab:beta0}
\end{table}

\subparagraph{$l^1(\mathbf T\times\mathbf X)$ error for different parameter values} We consider four different values of the parameter $\xi\in \mathbf\Xi_e:=(0,0.2,0.6,1)$ (which correspond to a shock initially located at $x=-0.25$, $x=-0.2$, $x=-0.1$ and $x=0$), and $100$ equidistant points in $\mathbf T$ and $\mathbf X$, denoting the test sets $\mathbf T_e$ and $\mathbf X_e$ respectively. We then choose to study, for each $\xi_e\in\mathbf\Xi_e$, the evolution of the relative $l^1(\mathbf T_e\times\mathbf X_e)$ error with respect to the degree $d$ of the hierarchy. We are thus interested in
\begin{equation*}
    {e_p}_{\xi_e}(d):=\frac{\Vert u(\cdot,\cdot,\xi_e)-\widetilde{u_d}(\cdot,\cdot,\xi_e)\Vert_{l^1(\mathbf T_e\times\mathbf X_e)}}{\Vert u(\cdot,\cdot,\xi_e)\Vert_{l^1(\mathbf T_e\times\mathbf X_e)}},
\end{equation*}
for all $\xi_e\in\mathbf\Xi_e$.
The results are presented in Table \ref{tab:err_xi0}.
\begin{table}[h]
    \centering
    \begin{tabular}{c|c c c c c c c}
        $d$ & 2 & 3 & 4 & 5 & 6 & 7 & 8 \\
        \hline
        \reviewde{$q_d$} & \reviewde{140} & \reviewde{420} & \reviewde{990} & \reviewde{2002} & \reviewde{3640} & \reviewde{6120} & \reviewde{9690} \\
        \hline
        ${e_p}_{0}(d)$ & 0.208 & 0.0616 & 0.0343 & 0.0314 & 0.0279 & 0.0276 & 0.0271\\
        \hline
        ${e_p}_{0.2}(d)$ & 0.0971 & 0.0286 & 0.0218 & 0.0193 & 0.0176 & 0.0171 & 0.0182\\
        \hline
        ${e_p}_{0.6}(d)$ & 0.0563 & 0.0207 & 0.0162 & 0.0162 & 0.0158 & 0.0161 & 0.0174\\
        \hline
        ${e_p}_{1}(d)$ & 0.104 & 0.0407 & 0.0244 & 0.0229 & 0.0208 & 0.0194 & 0.0184
    \end{tabular}
    \caption{Number of unknowns $q_d$ and errors ${e_p}_{\xi_e}(d)$ for $d=2,\dots,8$}
    \label{tab:err_xi0}
\end{table}




We observe the same behaviour of the errors as in the previous paragraph.

\paragraph{\reviewde{Conservation condition}} \reviewde{We here check 
the conservation condition, i.e. how far $c_d(t,\xi):=\int_{\mathbf X}(\widetilde{u_d}(t,x,\xi)-u(t,x,\xi))dx$ is from $0$. We plot on Figure \ref{fig:conCon} 
 the function $t\mapsto c_d(t,0.2)$ for $11$ equidistant points in $\mathbf T$ and for $d=2,5,8$. In order to approximate the integral, we compute the pointwise error for $1001$ equidistant points in $\mathbf X$ and divide the sum by the number of points.} \reviewde{The conservation condition is rather well satisfied and it tends to improve as $d$ rises.}

\begin{figure}
    \centering
    \includegraphics[scale=0.85]{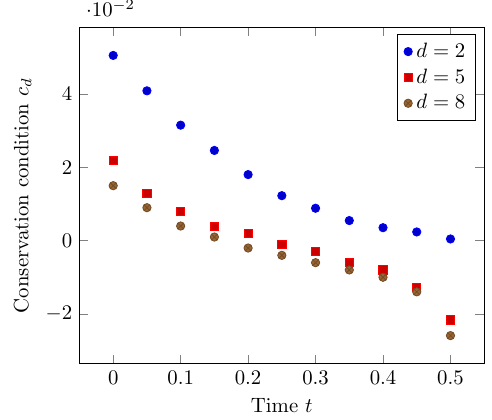}
    \caption{\reviewde{Conservation condition $c_d(t,0.2)$ versus time $t$ for $d=2,5,8$}}
    \label{fig:conCon}
\end{figure}

\paragraph{Retrieving statistical moments of the solution} Denote $\mathbf T_e$ $100$ equidistant points in $\mathbf T$ and $\mathbf X_e$ $100$ equidistant points in $\mathbf X$. We want to approximate the expectation $f_1(x,t)$  of the solution for all $(t,x)\in\mathbf T_e\times\mathbf X_e$ following the method described in Section \ref{subsubsec:staMomVarInt}. Denoting $\widetilde{{f_1}_d}$ the approximated expected value of the solution for degree of relaxation $d$ of the hierarchy, we want to compute the relative $l^1(\mathbf T_e\times\mathbf X_e)$ error of our approximation for $d=2,\dots,8$, namely, we are interested in
\begin{equation*}
    e_s(d):=\frac{\Vert f_1-\widetilde{{f_1}_d}\Vert_{l^1(\mathbf T_e\times\mathbf X_e)}}{\Vert f_1\Vert_{l^1(\mathbf T_e\times\mathbf X_e)}},
\end{equation*}
for all $d=2,\dots,8$. 
The results are presented in Table \ref{tab:err_s}.

\begin{table}[h]
    \centering
    \begin{tabular}{c|c c c c c c c}
        d & 2 & 3 & 4 & 5 & 6 & 7 & 8 \\
        \hline
        \reviewde{$q_d$} & \reviewde{140} & \reviewde{420} & \reviewde{990} & \reviewde{2002} & \reviewde{3640} & \reviewde{6120} & \reviewde{9690} \\
        \hline
        $e_s(d)$ & 0.358 & 0.102 & 0.0557 & 0.0451 & 0.0484 & 0.0574 & 0.0637
    \end{tabular}
    \caption{Number of unknowns $q_d$ and error $e_s(d)$ for $d=2,\dots,8$}
    \label{tab:err_s}
\end{table}

    We note here the same phenomenon as for the errors presented above occurring, where the approximation rapidly improves as  $d$ rises until $d=5$. The convergence is then rather slow and not monotone.

\subsection{Riemann problem for the Burgers equation with parametrised flux}

As a second illustration, we consider the classical one-dimensional Riemann problem (see e.g., \cite{evans97}) for a Burgers equation, where we parametrise the flux of the equation. In particular, we choose the flux
\begin{equation*}
f(u,\xi)=\frac14(\xi+1)u^2,
\end{equation*}
with a parameter $\xi$ taking values in $\mathbf\Xi=[0,1]$.
The Riemann problem to this conservation law is a Cauchy problem with the following initial condition, piecewise constant with one point of discontinuity:
\begin{equation*}
u_0(x)=\left\lbrace
\begin{aligned}
1\text{ if }x<0,\\
0\text{ if }x\geq0.
\end{aligned}
\right.
\end{equation*}
The solution is known to take values in $\mathbf U=[0,1]$. The time-space window on which we consider the solution is $\mathbf T=[0,\frac 1 2]$ and $\mathbf X=[-\frac 1 2,\frac 1 2]$.

The unique analytical solution corresponding to the initial condition is
\begin{equation}
u(t,x,\xi)=\left\lbrace
\begin{aligned}
1\text{ if }x<\frac14(\xi+1)t,\\
0\text{ if }x\geq\frac14(\xi+1)t,
\end{aligned}
\right.
\end{equation}
We can note that the randomness in \eqref{eq:anaSolParIniCon} was simply a translation of the solution, whereas, here, the phenomenon is non-linear, since the speed of the shock depends on $\xi$.

Providing $\mathbf\Xi$ with the Lebesgue measure on $[0,1]$, it comes that, for all $k\in\mathbb N$,
\begin{equation*}
    f_k(0,x)=u_0(x)^k,
\end{equation*}
for all $x\in\mathbf X$, and
\begin{equation*}
    f_k(t,x)=1-\min(1,\max(0,\frac{4x}t-1))+0^k\min(1,\max(0,\frac{4x}t-1)),
\end{equation*}
for all $(t,x)\in\mathbf T\times\mathbf X$. We may notice that in this simple case, for all $t>0$, $f_k(t,\cdot)$ is independent on $k$ for $k\geq1$.

\paragraph{Retrieving the graph of the solution}
Figure \ref{fig:fxi0} shows the graphs of the approximate solution $\widetilde{u_d}(t,x,0)$ for $(t,x)\in\mathbf T\times\mathbf X$ (so that the speed of the shock is $\frac14$), with relaxation degree $d=2,5,8$.

\begin{figure}[H]
    \centering
    \begin{subfigure}[b]{0.45\textwidth}
        \centering
        \includegraphics[width=\textwidth]{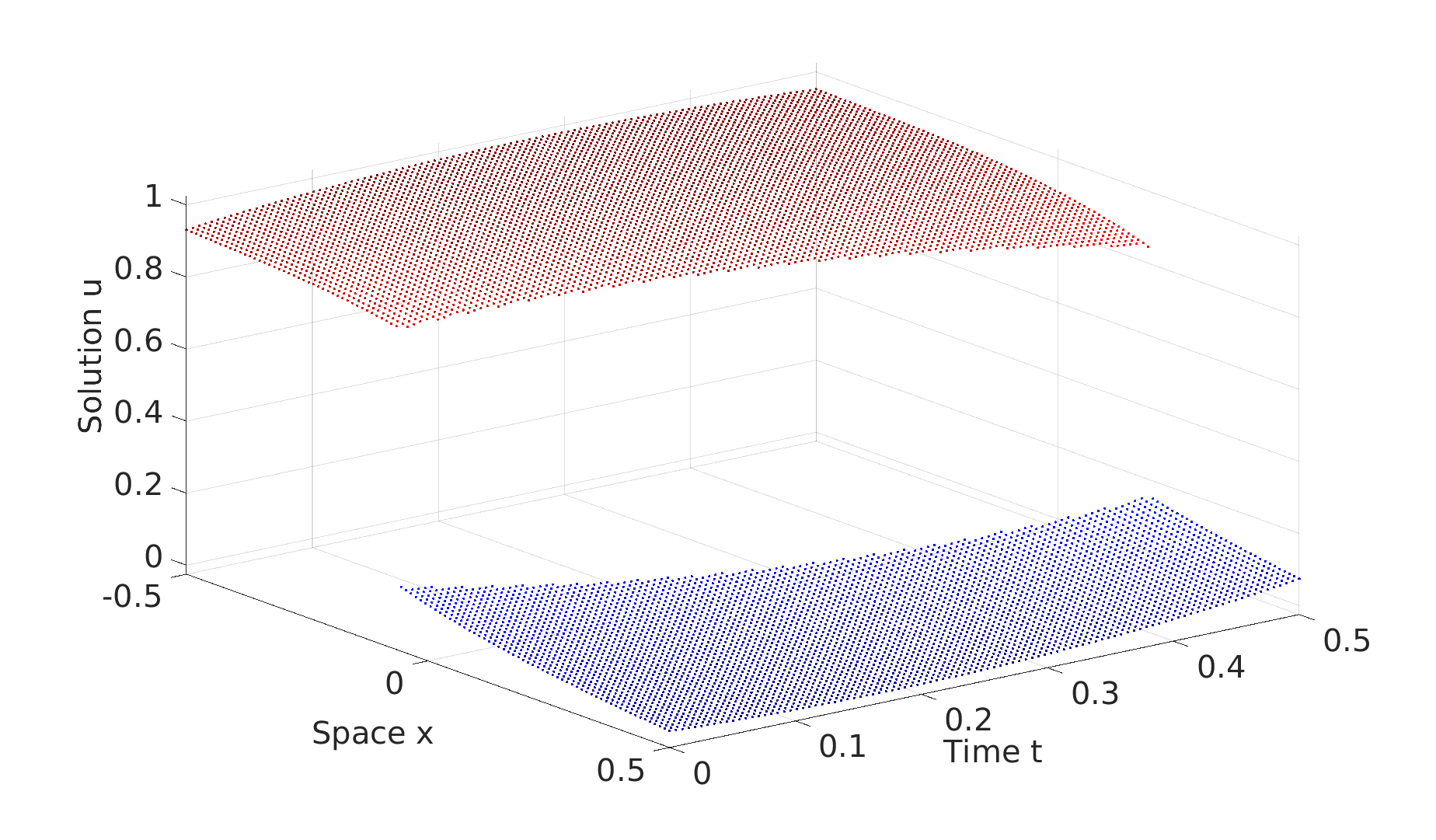}
        \caption{$d=2$}
        \label{fig:fd2xi0}
    \end{subfigure}
    \begin{subfigure}[b]{0.45\textwidth}
        \centering
        \includegraphics[width=\textwidth]{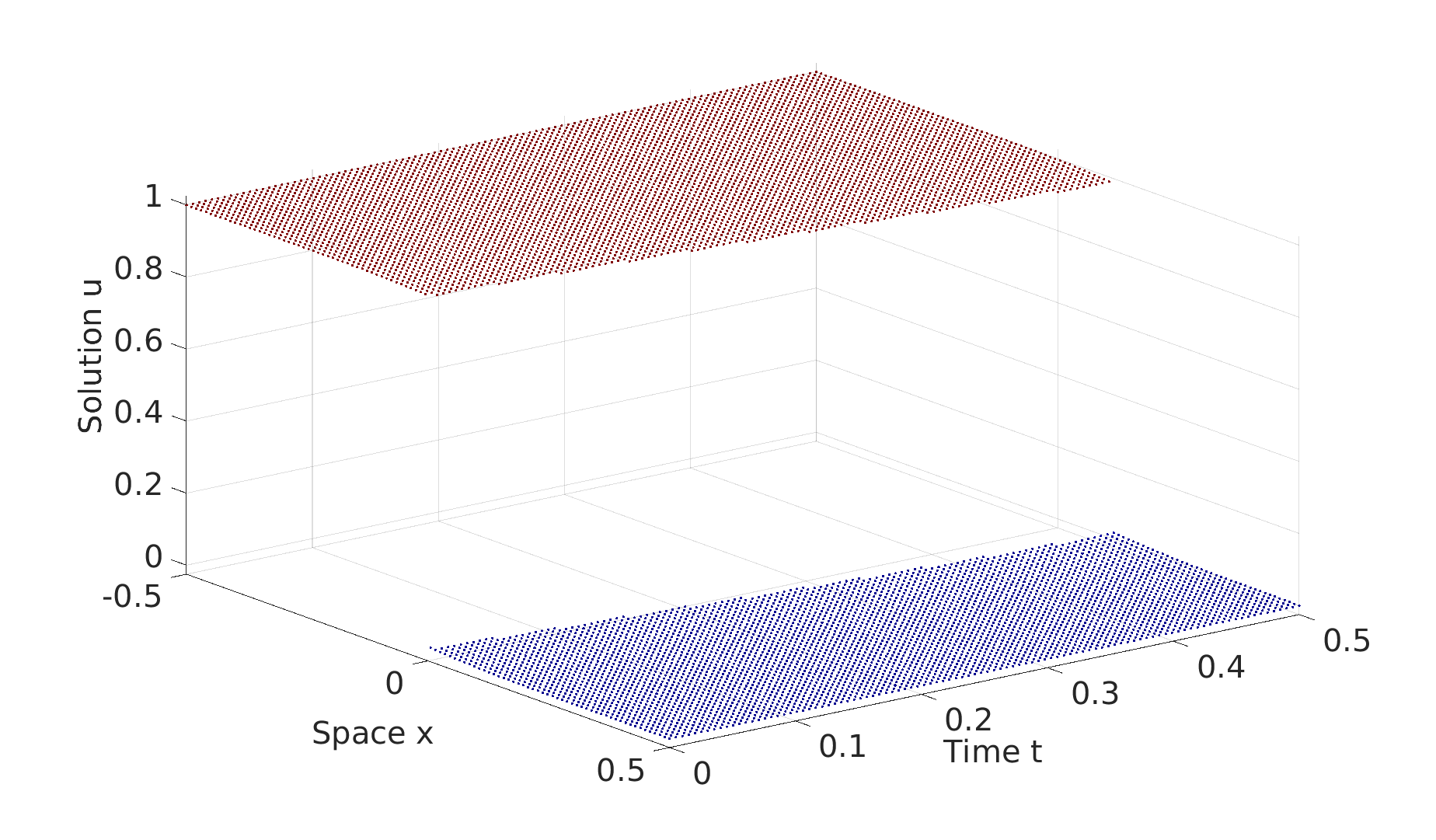}
        \caption{$d=5$}
        \label{fig:fd5xi0}
    \end{subfigure}
    \begin{subfigure}[b]{0.45\textwidth}
        \centering
        \includegraphics[width=\textwidth]{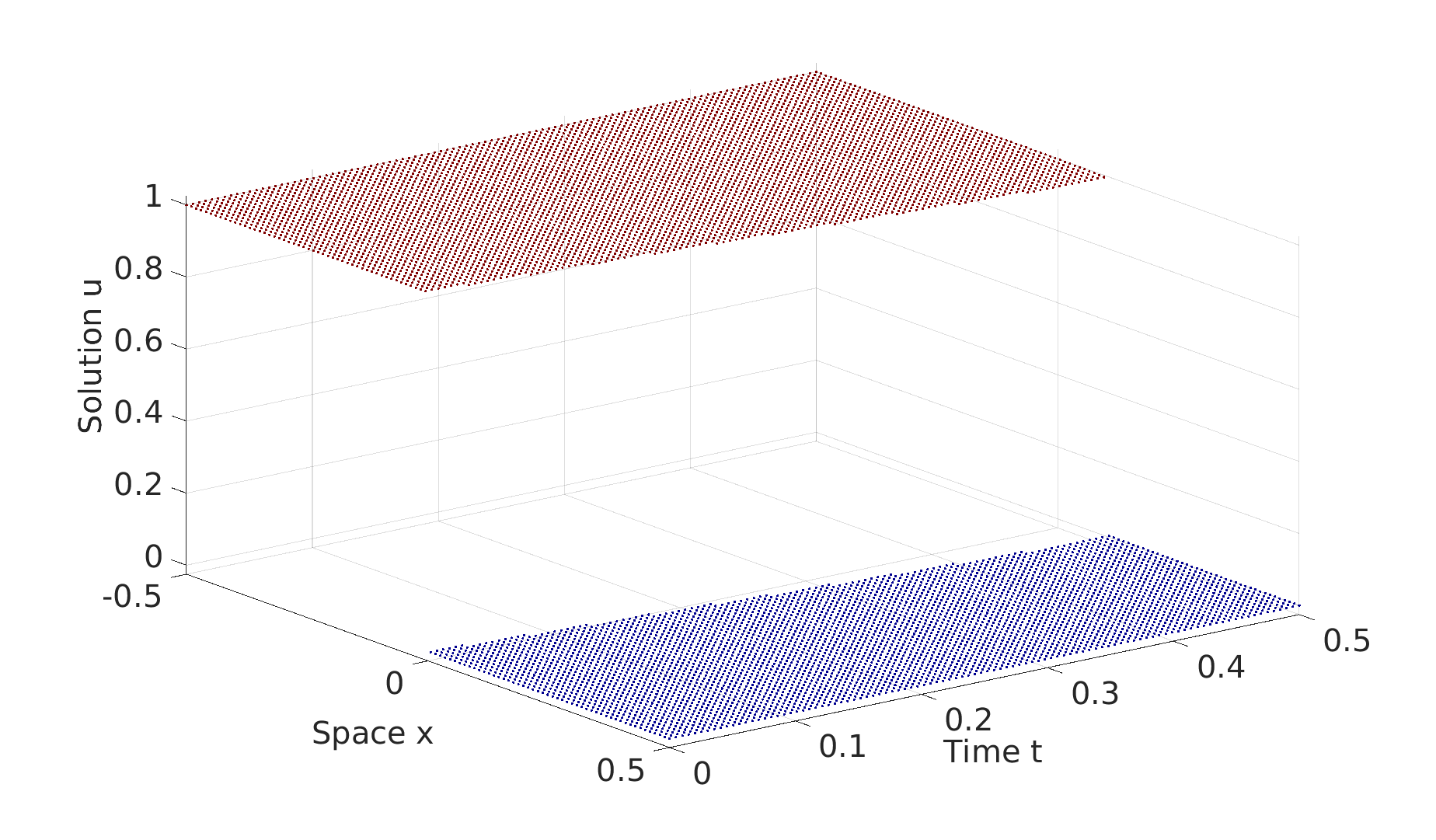}
        \caption{$d=8$}
        \label{fig:fd8xi0}
    \end{subfigure}
    \caption{Graphs of the approximate solution $\widetilde{u_d}(t,x,0)$ for $d=2,5,8$}
    \label{fig:fxi0}
\end{figure}

\reviewde{Figure \ref{fig:fluVar} shows the graphs of the approximate solution $\widetilde{u_d}(\frac14,x,\xi)$ for $x\in\mathbf X$ and \(\xi=0,1\) (so that the speed of the shock is $\frac14$ and \(\frac12\) respectively), with hierarchy's degree $d=2,5,8$, superposed with the exact solution.} \reviewde{Once more, we see that our approximation is very close to the exact solution for \(d=5\), and indistinguishable from the exact solution  for \(d=8\).}

\begin{figure}[H]
    \centering
    \begin{subfigure}[b]{0.49\textwidth}
        \centering
        \includegraphics[width=\textwidth]{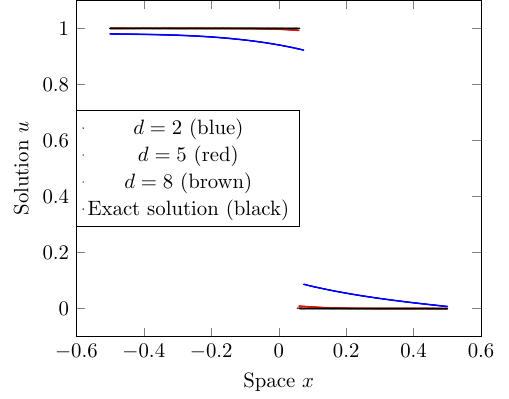}
        \caption{\reviewde{\(\xi=0\)}}
        \label{fig:fluVarXi0}
    \end{subfigure}
    \begin{subfigure}[b]{0.49\textwidth}
        \centering
        \includegraphics[width=\textwidth]{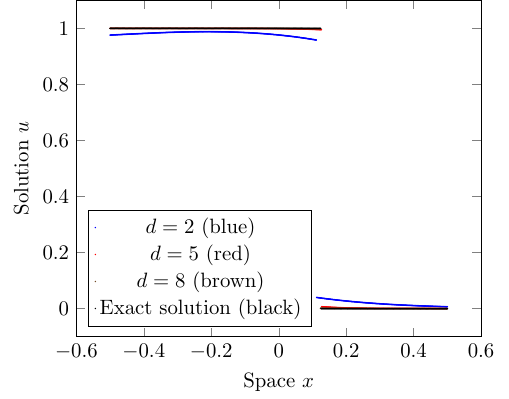}
        \caption{\reviewde{\(\xi=1\)}}
        \label{fig:fluVarXi1}
    \end{subfigure}
    \caption{\reviewde{Graphs of the approximate solution $\widetilde{u_d}(\frac14,x,\xi)$ for $d=2,5,8$, superposed with the exact solution}}
    \label{fig:fluVar}
\end{figure}



\paragraph{$l^1(\mathbf T\times\mathbf X\times\mathbf\Xi)$ error} We pick at random $25$ values in $\mathbf\Xi$, and consider $25$ equidistant values in $\mathbf T$ and $\mathbf X$. We denote the test sets $\mathbf\Xi_e$, $\mathbf T_e$ and $\mathbf X_e$ respectively. We study the evolution of the relative $l^1$ error with respect to the degree $d$ of the hierarchy. Namely, we are interested in
\begin{equation*}
    e_g(d):=\frac{\Vert u-\widetilde{u_d}\Vert_{l^1(\mathbf T_e\times\mathbf X_e\times\mathbf\Xi_e)}}{\Vert u\Vert_{l^1(\mathbf T_e\times\mathbf X_e\times\mathbf\Xi_e)}}.
\end{equation*}
The results are shown in Table \ref{tab:err_par_flux} and on Figure \ref{fig:ploGloErrFlu}.

\begin{table}[h]
    \centering
    \begin{tabular}{c|c c c c c c c}
        d & 2 & 3 & 4 & 5 & 6 & 7 & 8 \\
        \hline
        \reviewde{$q_d$} & \reviewde{140} & \reviewde{420} & \reviewde{990} & \reviewde{2002} & \reviewde{3640} & \reviewde{6120} & \reviewde{9690} \\
        \hline
        $e_g(d)$ & 0.0738 & 0.0285 & 0.0142 & 0.00772 & 0.00780 & 0.00818 & 0.00963
    \end{tabular}
    \caption{Number of unknowns $q_d$ and error $e_g(d)$ for $d=2,\dots,8$}
    \label{tab:err_par_flux}
\end{table}

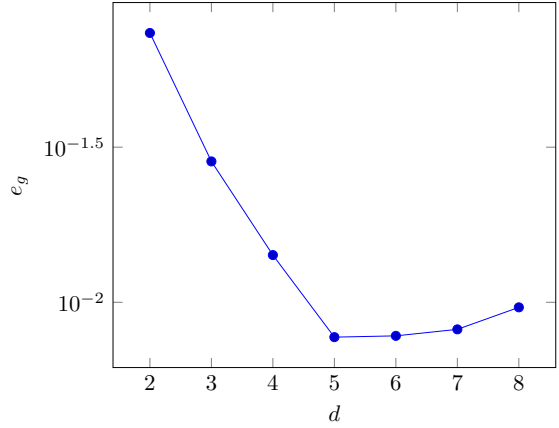
\begin{figure}[H]
    \centering
    \begin{tikzpicture}[scale = .85]
        \begin{semilogyaxis}[
            xlabel={$d$},
            ylabel={$e_g$},
        ]
        
        \addplot
            coordinates {
            (2,0.073821514212147)(3,0.028452982997209)(4,0.014196672906510)(5,0.007721476284434)(6,0.007795082974284)(7,0.008179480123524)(8,0.009630964127542)
            };
            
        \end{semilogyaxis}
    \end{tikzpicture}
    \caption{\reviewde{Evolution of the error $e_g$ with relaxation degree $d$}}
    \label{fig:ploGloErrFlu}
\end{figure}

  We note here the same phenomenon as for the errors presented above occurring, where the approximation improves as $d$ rises until $d=5$. Then the convergence is not monotone and rather slow. 

\paragraph{\reviewde{Conservation condition}} \reviewde{We once more study the conservation condition by plotting the function $t\mapsto 
c_d(t,\xi)=\int_{\mathbf X}(\widetilde{u_d}(t,x,\xi)-u(t,x,\xi))dx$ for $\xi=0.2$ for $11$ equidistant points in $\mathbf T$ and $d=2,5,8$ on Figure \ref{fig:fluCon}.}
\reviewde{The same remark as for the previous experiment holds: the conservation condition is rather well satisfied and it tends to improve as $d$ increases.} 

\begin{figure}[H]
    \centering
    \begin{subfigure}[b]{0.45\textwidth}
        \centering
    \includegraphics[scale=0.8]{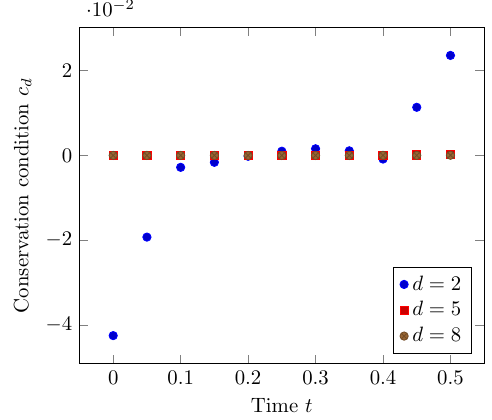}
    \end{subfigure} \hspace{2em}
        \begin{subfigure}[b]{0.45\textwidth}
        \centering
    \includegraphics[scale=0.8]{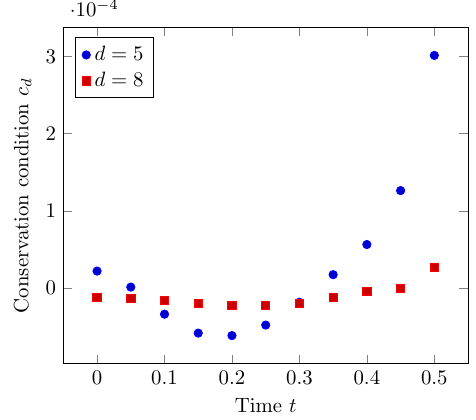}
    \end{subfigure}
    \caption{\reviewde{Conservation condition $c_d(t,0.2)$ versus time $t$ for $d=2,5,8$  (left) and 
     for  $d=5,8$ (right) } }\label{fig:fluCon}
\end{figure}


\clearpage 

\appendix
\appendixpage
\section{Imposing marginal constraints of occupation measures}
\label{ap:impConMomNu}
First, to ensure that the marginal of $\nu$ with respect to $t$, $\mathbf x$ and $\bm\xi$ is the tensor product of the Lebesgue measure on $\mathbf T\times\mathbf X$ and $\rho$, it suffices to impose that
\begin{equation}\label{eq:1bouMea}
\int_{\mathbf K}t^{\alpha_1}\mathbf x^{\bm\alpha_2}\bm\xi^{\bm\alpha_3}d\nu(t,\mathbf x,\bm\xi,y)=\int_{\mathbf T\times\mathbf X\times\mathbf\Xi}t^{\alpha_1}\mathbf x^{\bm\alpha_2}\bm\xi^{\bm\alpha_3}dtd\mathbf xd\rho(\bm\xi),
\end{equation}
for all $\bm\alpha\in\mathbb N^{n+p+1}$. In a similar manner, we impose the marginals of the time boundary measures to be products of a Dirac measure, a Lebesgue measure and $\rho$ as follows: for all $\bm\alpha\in\mathbb N^{n+p+1}$,
\begin{equation}
\int_{\mathbf K}t^{\alpha_1}\mathbf x^{\bm\alpha_2}\bm\xi^{\bm\alpha_3}d\nu_0(t,\mathbf x,\bm\xi,y)=0^{\alpha_1}\int_{\mathbf X\times\mathbf\Xi}\mathbf x^{\alpha_2}\bm\xi^{\alpha_3}d\mathbf xd\rho(\bm\xi),
\end{equation}
\begin{equation}
\int_{\mathbf K}t^{\alpha_1}\mathbf x^{\bm\alpha_2}\bm\xi^{\bm\alpha_3}d\nu_T(t,\mathbf x,\bm\xi,y)=T^{\alpha_1}\int_{\mathbf X\times\mathbf\Xi}\mathbf x^{\bm\alpha_2}\bm\xi^{\bm\alpha_3}d\mathbf xd\rho(\bm\xi),
\end{equation}
and
\begin{equation}\label{eq:lasMarCon}
\int_{\mathbf K}t^{\alpha_1}\mathbf x^{\bm\alpha_2}\bm\xi^{\bm\alpha_3}d\nu_{B,i}(t,\mathbf x,\bm\xi,y)=\int_{\mathbf T\times\Gamma_{B,i}\times\mathbf\Xi}t^{\alpha_1}\mathbf x^{\bm\alpha_2}\bm\xi^{\bm\alpha_3}dtd \mathbf x d\rho(\bm\xi),
\end{equation}
for all $1\le i \le n$ and $B\in \{L,R$\}.
\section{Split measures and corresponding moments constraints}\label{ap:defSplMea}
In addition to split measures $\vartheta_T^+$ and $\vartheta_T^-$ associated with $\nu$, we introduce the time boundary measures $\vartheta_0^+$, $\vartheta_0^-$, $\vartheta_T^+$ and $\vartheta_T^-$ , which are defined as
\begin{subequations}\label{eq:2KruEnt}
\begin{equation}
d\vartheta_0^\pm(t,\mathbf x,\bm\xi,y,v):=\mathbbm 1_{\lbrace u\in\mathbf U:\pm(y-u)\geq 0\rbrace}(v)d\nu_0(t,\mathbf x,\bm\xi,y)dv,
\end{equation}
\begin{equation}
d\vartheta_T^\pm(t,\mathbf x,\bm\xi,y,v):=\mathbbm 1_{\lbrace u\in\mathbf U:\pm(y-u)\geq 0\rbrace}(v)d\nu_T(t,\mathbf x,\bm\xi,y)dv,
\end{equation}
\end{subequations}
with supports
\begin{align}
\mathbf K_0^\pm:=\supp(\vartheta_0^+)=\lbrace(t,\mathbf x,\bm\xi,y,v)\in\mathbf K_0\times\mathbf U:\pm(y-v)\geq 0\rbrace,\\
\mathbf K_T^\pm:=\supp(\vartheta_T^+)=\lbrace(t,\mathbf x,\bm\xi,y,v)\in\mathbf K_T\times\mathbf U:\pm(y-v)\geq 0\rbrace,
\end{align}
respectively.
We only introduce the space boundary measures $(\vartheta_{L_i}^+)_{i=1}^n$, $(\vartheta_{L_i}^-)_{i=1}^n$, $(\vartheta_{R_i}^+)_{i=1}^n$ and $(\vartheta_{R_i}^-)_{i=1}^n$, defined as
\begin{equation}\label{eq:3KruEnt}
d\vartheta_{B,i}^\pm(t,\mathbf x,\bm\xi,y,v):=\mathbf 1_{\lbrace u\in\mathbf U:\pm(y-u)\geq 0\rbrace}(v)d\nu_{L_i}(t,\mathbf x,\bm\xi,y)dv,
\end{equation}
for $B\in\{L,R\}$ and $1\leq i\leq n$, with supports
\begin{align}
\mathbf K_{B,i}^\pm:=\supp(\vartheta_{B,i}^\pm)=\lbrace(t,\mathbf x,\bm\xi,y,v)\in\mathbf K_{B,i}\times\mathbf U:\pm(y-v)\geq 0\rbrace.
\end{align}
The relation between $\nu$ and split measures $\vartheta^+$ and $\vartheta^-$ is imposed through moment constraints
\begin{equation}\label{eq:comConTheNu}
    \int_{\mathbf K\times\mathbf U}\mathbf w^{\bm\alpha}v^{\beta}(d\vartheta^++\reviewun{d}\vartheta^-)(\mathbf w,v)=\int_{\mathbf K\times\mathbf U}\mathbf w^{\bm\alpha}v^{\beta}d\nu(\mathbf w)dv,
\end{equation}
for all $\bm\alpha\in\mathbb N^{n+p+2}$ and for all $\beta\in\mathbb N$. Similar conditions are imposed between time and boundary measures and their corresponding split measures.

\section{\reviewde{Not imposing \eqref{eq:couConLaw} for the Burgers equation}}\label{ap:no_dyn}
\reviewde{For illustrative purposes, we solved the GMP and reconstructed the approximate solution without imposing \eqref{eq:couConLaw} for the Riemann problem where the initial position of the shock is parametrised. At relaxation degree $d=5$ and for $\xi=0.6$, which is equivalent to a shock initially located at $x=-\frac1{10}$, it yields Figure \ref{fig:no_dyn}. We observe here that removing \eqref{eq:couConLaw} from the constraints highly degrades the approximation.
}

\begin{figure}[h]
    \centering
    \includegraphics[width=.45\textwidth]{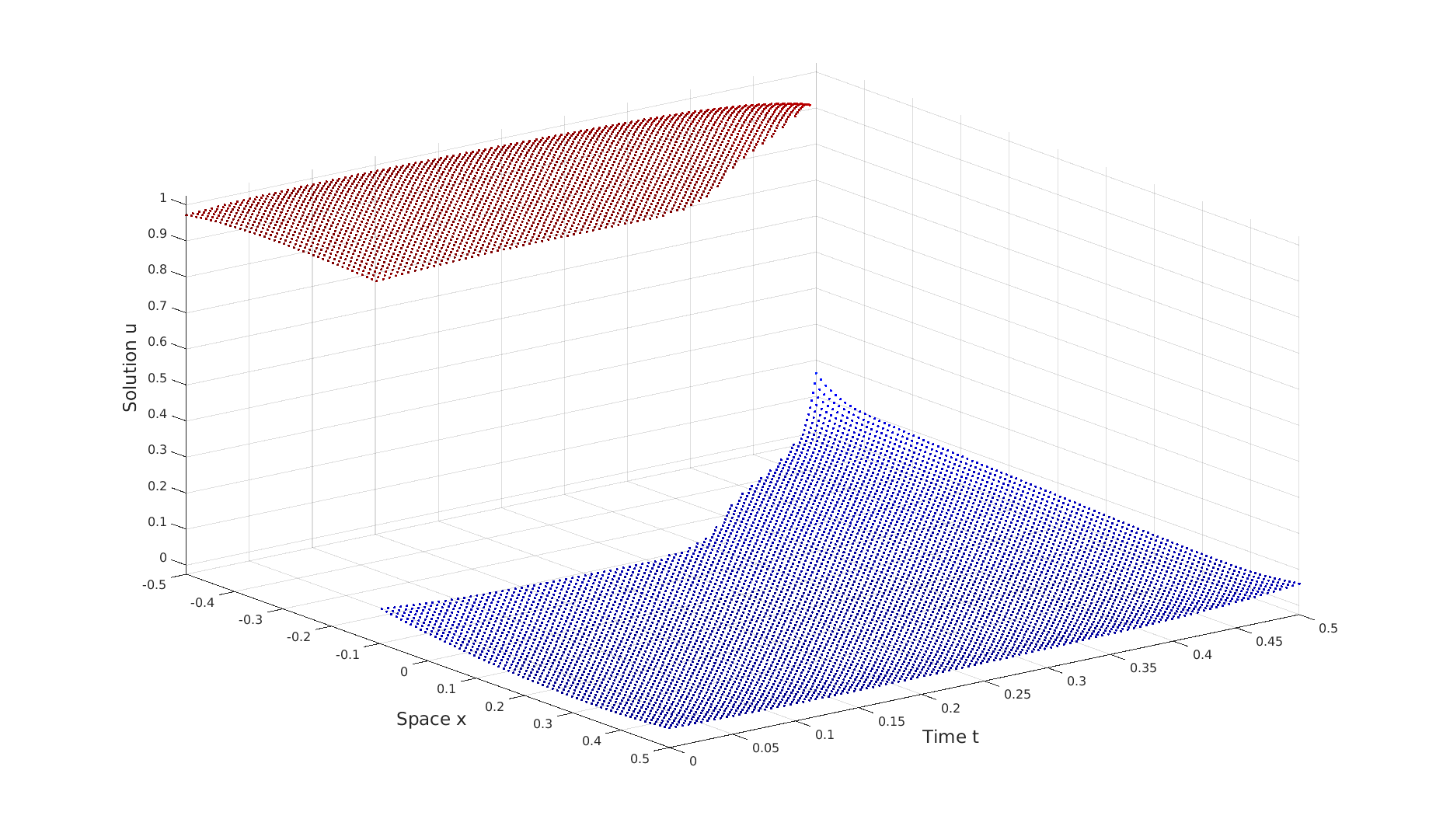}
    \caption{\reviewde{Graph of the approximate solution $\widetilde{u_d}(t,x,0.6)$ for $d=5$ without imposing \eqref{eq:couConLaw}}}
    \label{fig:no_dyn}
\end{figure}

\bibliography{Bibliographie}

\end{document}